\newtheorem{theorem}{Theorem}[section]
\newtheorem{lemma}[theorem]{Lemma}
\newtheorem{proposition}[theorem]{Proposition}
\newtheorem{corollary}[theorem]{Corollary}
\theoremstyle{definition}
\theoremstyle{remark}
\numberwithin{equation}{section}
\title[The boundary case for complex Monge-Amp\`ere type equations]{The boundary case for complex Monge-Amp\`ere type equations}
\author{Wei Sun}
\address{Institute of Mathematical Sciences, ShanghaiTech University, Shanghai, China}
\email{sunwei@shanghaitech.edu.cn}
\begin{document}
\setlength{\baselineskip}{1.2\baselineskip}

\begin{abstract}
In this paper, we shall study the boundary case for complex Monge-Amp\`ere type equations under certain geometric assumptions. 

\end{abstract}


\maketitle


\section {Introduction}

Let $(M,\omega)$ be a compact connected K\"ahler manifold without boundary of complex dimension $n \geq 2$. In this paper, we are concerned with the following complex Monge-Amp\`ere type equations: for a K\"ahler metric $\chi$, 
\begin{equation}
\label{equation-1}
	(\chi + \sqrt{-1} \partial\bar\partial\varphi)^n = c (\chi + \sqrt{-1} \partial\bar\partial \varphi)^m \wedge \omega^{n - m},\quad \sup_M \varphi = 0
\end{equation} 
where $ 0 \leq m < n$ and $ c: = \frac{\int_M \chi^n}{\int_M \chi^m \wedge \omega^{n - m}}$. A function $\varphi$ is called admissible or $\chi$-plurisubharmonic if $\chi + \sqrt{-1} \partial\bar\partial \varphi \geq 0$. 
We plan to find out an admissible solution to Equation~\eqref{equation-1}. In this case, Equation~\eqref{equation-1} is elliptic.

The complex Monge-Amp\`ere type equations are of great importance in complex geometry and analysis. There have been a lot of works on the following problem
\begin{equation}
\label{equation-general}
	(\chi + \sqrt{-1} \partial\bar\partial\varphi)^n = g (\chi + \sqrt{-1} \partial\bar\partial \varphi)^m \wedge \omega^{n - m},\quad \sup_M \varphi = 0 ,
\end{equation} 
with cone condition~\cite{FangLaiMa}\cite{SongWeinkove} (or $\mathcal{C}$-subsolution~\cite{Szekelyhidi})
\begin{equation}
\label{cone-condition}
	n \chi^{n - 1} > m g \chi^{m - 1} \wedge \omega^{n - m} .
\end{equation}
When $m = 0$, Equation~\eqref{equation-general} is exactly complex Monge-Amp\`ere equation, which plays key roles in complex geometry since the foundational work of Yau~\cite{Yau1978}. The cone condition~\eqref{cone-condition} is naturally satisfied by K\"ahler metric $\chi$. When $m = n - 1$, Equation~\eqref{equation-general} is defined in the works of Donaldson~\cite{Donaldson1999} and Chen~\cite{Chen2000} in different problems.

When $g = c$, Equation~\eqref{equation-1} was first studied via parabolic flows by Chen~\cite{Chen2004}, Weinkove~\cite{Weinkove2004}\cite{Weinkove2006}, Song-Weikove~\cite{SongWeinkove}, and Fang-Lai-Ma~\cite{FangLaiMa}. Later, several different elliptic methods were figured out respectively by Li-Shi-Yao~\cite{LiShiYao}, Sun~\cite{Sun2016}\cite{Sun2017}, Collins-Sz\'ekelyhidi~\cite{CollinsSzekelyhidi} and Sz\'ekelyhidi~\cite{Szekelyhidi}.

An interesting question is what will happen when cone condtion~\eqref{cone-condition} degenerates to the boundary case
\begin{equation}
\label{condition-1}
	n \chi^{n - 1} \geq m c \chi^{m - 1} \wedge \omega^{n - m} .
\end{equation}
Fang, Lai,Song and Weinkove~\cite{FangLaiSongWeinkove} studied 
\begin{equation*}
	(\chi + \sqrt{-1}\partial\bar\partial \varphi)^2 = c (\chi + \sqrt{-1} \partial\bar\partial\varphi) \wedge \omega
\end{equation*} 
on K\"ahler surfaces in the boundary case 	$2 \chi \geq c \omega $.
Instead of Equation~\eqref{equation-1}, we shall consider in this paper
\begin{equation}
\label{equation-2}
	(\chi + \tilde\chi + \sqrt{-1} \partial\bar\partial\varphi)^n = c (\chi + \tilde\chi + \sqrt{-1}\partial\bar\partial \varphi)^m \wedge \omega^{n - m},\quad \sup_M \varphi = 0,
\end{equation}
where $\tilde \chi$ is semipositive and big,
\begin{equation}
\label{equality-1-3}
	\int_M (\chi + \tilde\chi  )^n = c\int_M (\chi + \tilde\chi  )^m \wedge \omega^{n - m} ,
\end{equation}
and the boundary case~\eqref{condition-1} holds. 

The following theorem is our main result.
\begin{theorem}
\label{main-theorem}
Let $(M, \omega)$ be a compact K\"ahler manifold without boundary of complex dimension $n \geq  2$ , $\chi$ also a K\"ahler metric,  and $\tilde \chi$ a big semipositive form. Suppose that $\chi$ satisfies the boundary case~\eqref{condition-1} of cone condition. Then there is a bounded weak solution $\varphi$ in pluripotential sense solving Equation~\eqref{equation-2}. In particular, $\varphi$ is smooth in the ample locus of $\tilde \chi$.

\end{theorem}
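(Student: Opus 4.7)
The plan is to solve a family of non-degenerate approximate equations, extract uniform a priori estimates depending only on $\varepsilon$-stable data, and pass to the limit. Set $\tilde\chi_\varepsilon := \tilde\chi + \varepsilon\omega$ and $\hat\chi_\varepsilon := \chi + \tilde\chi_\varepsilon$, which is K\"ahler for every $\varepsilon > 0$; choose $c_\varepsilon$ so that $\int_M \hat\chi_\varepsilon^n = c_\varepsilon \int_M \hat\chi_\varepsilon^m \wedge \omega^{n-m}$, in which case $c_\varepsilon \to c$ by \eqref{equality-1-3}. Consider the perturbed equations
\[
	(\hat\chi_\varepsilon + \sqrt{-1}\partial\bar\partial\varphi_\varepsilon)^n = c_\varepsilon (\hat\chi_\varepsilon + \sqrt{-1}\partial\bar\partial\varphi_\varepsilon)^m \wedge \omega^{n-m},\qquad \sup_M \varphi_\varepsilon = 0.
\]
A pointwise eigenvalue computation should show that since $\tilde\chi \geq 0$ and $\varepsilon \omega > 0$, the boundary inequality \eqref{condition-1} for $\chi$ lifts to a \emph{strict} cone condition for $\hat\chi_\varepsilon$ with constant $c_\varepsilon$; in particular, the function $0$ is a $\mathcal{C}$-subsolution in the sense of Sz\'ekelyhidi. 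Existence of a unique smooth admissible solution $\varphi_\varepsilon$ then follows from the existence theorems of Collins--Sz\'ekelyhidi or Sz\'ekelyhidi.

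The central task is to obtain estimates uniform in $\varepsilon$, the hardest being a uniform bound $\|\varphi_\varepsilon\|_{L^\infty(M)} \leq C$. Because $[\chi + \tilde\chi]$ is big but not K\"ahler, one cannot invoke a K\"ahler-class Moser iteration directly. I would instead rewrite the approximate equation as $(\hat\chi_\varepsilon + \sqrt{-1}\partial\bar\partial\varphi_\varepsilon)^n = F_\varepsilon\, \omega^n$, obtain a uniform $L^p(\omega^n)$ bound on $F_\varepsilon$ for some $p > 1$ from the $m$-th symmetric function structure, admissibility of $\varphi_\varepsilon$, and Maclaurin-type inequalities, and then apply a Ko\l odziej-type estimate adapted to the big class $[\chi + \tilde\chi]$ in the style of Eyssidieux--Guedj--Zeriahi (or via an auxiliary complex Monge--Amp\`ere equation).

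Interior higher-order estimates on the ample locus are comparatively standard. On any $K \Subset \mathrm{Amp}(\tilde\chi)$, one writes $\tilde\chi = \tilde\chi_0 + \sqrt{-1}\partial\bar\partial \psi$ with $\tilde\chi_0$ smooth K\"ahler in a neighborhood of $K$ and $\psi$ smooth on $K$. After absorbing $\psi$ into $\varphi_\varepsilon$, Sz\'ekelyhidi's local second-order estimate together with the Evans--Krylov and Schauder theory yields uniform $C^{k,\alpha}(K)$ bounds for every $k$. A diagonal subsequence $\varphi_{\varepsilon_j}$ then converges in $C^\infty_{\mathrm{loc}}(\mathrm{Amp}(\tilde\chi))$ to a bounded $(\chi+\tilde\chi)$-plurisubharmonic function $\varphi$ which solves \eqref{equation-2} classically on the ample locus, while the global pluripotential identity follows from the weak continuity of the complex Monge--Amp\`ere and mixed operators along uniformly bounded sequences.

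The main obstacle is clearly the uniform $L^\infty$ bound: the simultaneous degeneracies in the reference class (big but not K\"ahler) and in the cone condition (only boundary, not strict) mean that neither the classical maximum-principle arguments from the K\"ahler setting nor the standard Ko\l odziej estimate applies off the shelf. Adapting pluripotential techniques to the mixed $m$-power Hessian inequality so that the final constant depends only on integral data stable as $\varepsilon \to 0$ will be the principal technical challenge; the verification that adding the semipositive $\tilde\chi$ really does promote the boundary cone condition to a strict one is a secondary, but necessary, check.
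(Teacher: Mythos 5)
Your general strategy (non-degenerate approximation, uniform estimates, passage to the limit) is the right shape, and the interior smoothness argument on the ample locus via a K\"ahler current with analytic singularities is consistent with what the paper does (Lemma~\ref{lemma-5-2}, using Boucksom's $\rho$). However, two central steps in your scheme are gaps, and they are precisely the places where the paper's construction of the approximating family differs from yours in an essential way.

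First, you perturb to $\hat\chi_\varepsilon = \chi+\tilde\chi+\varepsilon\omega$ and simultaneously renormalize the constant to $c_\varepsilon = \int_M\hat\chi_\varepsilon^n / \int_M\hat\chi_\varepsilon^m\wedge\omega^{n-m}$, claiming that ``a pointwise eigenvalue computation'' shows the strict cone condition $n\hat\chi_\varepsilon^{n-1} > m c_\varepsilon\hat\chi_\varepsilon^{m-1}\wedge\omega^{n-m}$. This is not a pointwise computation. What monotonicity of the cone gives you is $n\hat\chi_\varepsilon^{n-1} > m c\,\hat\chi_\varepsilon^{m-1}\wedge\omega^{n-m}$ with the \emph{original} $c$; but the same monotonicity (integrated against $\omega$, using \eqref{equality-1-3}) forces $c_\varepsilon > c$, and the required strict inequality with $c_\varepsilon$ becomes a competition between two quantities that both vanish as $\varepsilon\to 0$ near the degenerate set of $\tilde\chi$ --- the pointwise slack on one side, $c_\varepsilon - c$ on the other. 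It happens to work out for $n=2$, $m=1$ because the cone condition is linear there, but for general $(n,m)$ it depends on the geometry, and even if true the strictness is not uniform in $\varepsilon$, which is what the a priori estimates actually need. The paper sidesteps this entirely: it keeps $c$ fixed and absorbs the integral discrepancy into a \emph{positive source} $b_t f\,\omega^n$ on the right of \eqref{approximation-equation-1-5}, with $b_t>0$ coming exactly from the cone monotonicity computed at the start of Section~\ref{L-estimate}. The positivity of this source is what makes $0$ a strict subsolution and, more importantly, it is what the $L^\infty$ and $C^2$ estimates lean on.

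Second, your $L^\infty$ plan --- get a uniform $L^p(\omega^n)$ bound on $F_\varepsilon := (\hat\chi_\varepsilon+\sqrt{-1}\partial\bar\partial\varphi_\varepsilon)^n/\omega^n$ with $p>1$ by Maclaurin and then run an Eyssidieux--Guedj--Zeriahi / Ko{\l}odziej estimate --- has a missing step. Maclaurin with the constraint $S_n = (c_\varepsilon/C^m_n)S_m$ gives a \emph{lower} bound $F_\varepsilon \geq c_\varepsilon^{n/(n-m)}$, not an upper bound; $F_\varepsilon$ is unbounded wherever the metric degenerates, and mass considerations only yield $L^1$. The paper's argument (following \cite{GPT2021}\cite{SuiSun2021}) is structured differently: it bounds the Monge--Amp\`ere measure of the potential part, namely $(t\chi+\tilde\chi+\sqrt{-1}\partial\bar\partial\varphi_t)^n \leq (b_1 f + C_0)\omega^n$ (the inequality at the start of the proof of Theorem~\ref{theorem-3-2}), and this upper bound crucially uses the boundary cone condition $\chi^n - c\chi^m\wedge\omega^{n-m}\geq 0$ together with the explicit $b_t f$ source. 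The auxiliary Monge--Amp\`ere comparison is then run in the perturbed class $[t\chi+\tilde\chi]$, not against $\omega^n$ with a Ko{\l}odziej input. Without the $b_t f$ term your version of this bound collapses, and the Ko{\l}odziej route you sketch requires higher integrability you do not have. You should rebuild your approximation around the paper's $(t\chi,\,b_t f)$ device rather than the $(\varepsilon\omega,\,c_\varepsilon)$ device.
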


To construct the weak solution, we shall solve the following approximation equation: for $1\geq t > 0$
\begin{equation}
\label{approximation-equation-1-5}
\begin{aligned}
	\left(\chi +  t \chi +  \tilde \chi  + \sqrt{-1} \partial\bar\partial\varphi_t\right)^n 
	= c   \left(\chi + t\chi+  \tilde \chi + \sqrt{-1} \partial\bar\partial \varphi_t\right)^m \wedge \omega^{n - m} + b_t f \omega^n , 
\end{aligned}
\end{equation}
where $\sup_M \varphi_t = 0$ and $f$ is smooth and positive. Moreover, it is satisfied that 
\begin{equation}
\label{condition-1-6}
	 \int_M f \omega^n = \int_M \omega^n, \qquad \int_M (\chi + t\chi + \tilde \chi )^n = c\int_M (\chi + t\chi +  \tilde \chi )^m \wedge \omega^{n - m} + b_t \int_M \omega^n .
\end{equation}
A key tool in the argument is a new PDE proof of $L^\infty$ estimates~\cite{SuiSun2021}, which is derived from the work of Guo-Phong-Tong~\cite{GPT2021}. 
Guo, Phong and Tong combined the methods of Wang-Wang-Zhou~\cite{WangWangZhou} and Chen-Cheng~\cite{ChenCheng} to prove the $L^\infty$ estimate for Hessian type equations. Recently, Sui and the author~\cite{SuiSun2021} adapted the estimation argument to Hessian quotient equations.

An essential integrant  in our argument is the sempositve big form $\tilde \chi$. 
It was observed in \cite{FangLaiSongWeinkove} that on K\"ahler surfaces, the boundary case~\eqref{condition-1} implies the existence of $\tilde \chi$ via a simple transformation to complex Monge-Amp\`ere equation, which does not work well for higher dimensions.  However, we have a direct corollary.
\begin{corollary}
Let $(M, \omega)$ be compact K\"ahler manifold without boundary of complex dimension $n \geq  2$ , $\chi$ also a K\"ahler metric. 
Suppose that class $\left[\chi - \left(\frac{m c}{n}\right)^{\frac{1}{n - m}} \omega\right]$ is big and has a semipositve representative form.
Then there is a bounded weak solution $\varphi$ in pluripotential sense solving Equation~\eqref{equation-1}. In particular, $\varphi$ is smooth in the ample locus of $\tilde \chi$.

\end{corollary}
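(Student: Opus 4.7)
The plan is to derive the corollary as a direct consequence of Theorem~\ref{main-theorem} via a cohomological decomposition of the class $[\chi]$. Set $\lambda := \left(\frac{mc}{n}\right)^{1/(n-m)}$ and write $[\chi] = [\lambda\omega] + [\chi - \lambda\omega]$. The first summand is a K\"ahler class which by construction saturates the boundary cone condition, while the second is big and carries a semipositive representative by hypothesis. This splitting is exactly what is needed to feed the data into Theorem~\ref{main-theorem}.

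First I would set $\chi_0 := \lambda\omega$ and verify the boundary case~\eqref{condition-1} for $\chi_0$ in place of $\chi$: since $\chi_0^{n-1} = \lambda^{n-1}\omega^{n-1}$ and $\chi_0^{m-1}\wedge\omega^{n-m} = \lambda^{m-1}\omega^{n-1}$, the inequality $n\chi_0^{n-1} \geq mc\,\chi_0^{m-1}\wedge\omega^{n-m}$ reduces to $n\lambda^{n-m} \geq mc$, which is in fact an equality by the choice of $\lambda$. Next, let $\tilde\chi$ be the assumed semipositive big representative of $[\chi - \lambda\omega]$. Because $[\chi_0 + \tilde\chi] = [\chi]$ the total-mass identity~\eqref{equality-1-3} reduces to the defining identity $\int_M\chi^n = c\int_M\chi^m\wedge\omega^{n-m}$ of the constant $c$ in Equation~\eqref{equation-1}.

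All hypotheses of Theorem~\ref{main-theorem} have then been checked for the triple $(\chi_0,\tilde\chi,\omega)$, so invoking it produces a bounded pluripotential solution $\psi$ with $\sup_M\psi=0$ of
\begin{equation*}
    (\chi_0 + \tilde\chi + \sqrt{-1}\partial\bar\partial\psi)^n = c\,(\chi_0 + \tilde\chi + \sqrt{-1}\partial\bar\partial\psi)^m \wedge \omega^{n-m},
\end{equation*}
smooth on the ample locus of $\tilde\chi$. The $\partial\bar\partial$-lemma on the K\"ahler manifold $(M,\omega)$ then supplies a smooth function $v$ with $\chi_0 + \tilde\chi = \chi + \sqrt{-1}\partial\bar\partial v$, and setting $\varphi := \psi + v - \sup_M(\psi+v)$ yields a bounded function with $\chi + \sqrt{-1}\partial\bar\partial\varphi = \chi_0 + \tilde\chi + \sqrt{-1}\partial\bar\partial\psi \geq 0$, solving Equation~\eqref{equation-1} in the pluripotential sense and smooth on the ample locus of $\tilde\chi$.

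Since Theorem~\ref{main-theorem} absorbs all of the analytic work, there is no genuine obstacle in the corollary itself; the proof is purely an algebraic/cohomological reduction. The only point to be mindful of is that one needs $\lambda>0$ (i.e.\ $m\geq 1$) in order for $\chi_0=\lambda\omega$ to actually be a K\"ahler metric and for the boundary cone condition to be substantive; the excluded case $m=0$ is the classical complex Monge-Amp\`ere equation and is settled directly by Yau's theorem.
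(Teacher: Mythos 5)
Your proposal is correct and is essentially the argument the paper intends (the paper leaves the corollary's proof implicit, calling it a "direct corollary"): set $\chi_0=\lambda\omega$ with $\lambda=\left(\tfrac{mc}{n}\right)^{1/(n-m)}$ so that the boundary condition~\eqref{condition-1} holds with equality, take $\tilde\chi$ the semipositive big representative of $[\chi-\lambda\omega]$, note that cohomology invariance makes the normalization~\eqref{equality-1-3} reduce to the definition of $c$ in~\eqref{equation-1}, apply Theorem~\ref{main-theorem} to $(\chi_0,\tilde\chi,\omega)$, and transfer back to~\eqref{equation-1} via the $\partial\bar\partial$-lemma. Your remark that the case $m=0$ degenerates ($\lambda=0$) but is classical Yau is a sensible clarification.
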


An immediate question is  that under what kind of geometric conditions, we can find out the semipositive form $\tilde \chi$ on K\"ahler manifolds of dimension $n > 2$. 
There have been some great works recently on numerical characterizations of K\"ahler class~\cite{Demailly-Paun}\cite{Chen2021}. 
In view of these works, we wish to find out numerical conditions in future.

It is worth a mention that we can weaken the smoothness of $\tilde \chi$ to $C^{2,\alpha}$,
and only require that there is a $C^2$ admissible function $\underline u$ satisfying 
\begin{equation*}
\label{boundary-case-weak}
n (\chi + \sqrt{-1} \partial\bar\partial \underline u)^{n - 1} \geq m g (\chi + \sqrt{-1} \partial\bar\partial \underline u)^{m - 1} \wedge \omega^{n - m} .
\end{equation*}

 This paper is organized as follows.
 In Section~\ref{L-estimate}, we shall obtain $t$-independent $L^\infty$ estimate for approximation  Equation~\eqref{approximation-equation-1-5}.  
 In Section~\ref{solvability}, we shall prove that  the approximation equation has a smooth solution for $t > 0$, which may blow up as $t$ approaches $0$.   
 In Section~\ref{smoothness}, we shall further prove that the smoothness of solution in the ample locus of $\tilde\chi$ is independent of $t$. 
 In Section~\ref{stability}, a stability estimate will be provided.  
 In Section~\ref{solution}, we shall construct a weak solution in pluripotential sense according to the estimates in previous sections. Moreover, the solution is also well defined in other senses.  
 In Section~\ref{uniqueness}, we shall prove that the uniqueness of weak solution. 
 In Section~\ref{fake}, we shall explain the reason why we only discuss the special case of constant right-side term in Theorem~\ref{main-theorem}.


\medskip
\section{$L^\infty$ estimate}
\label{L-estimate}

In this section, we shall prove the $L^\infty$ estimate for approximation equation~\eqref{approximation-equation-1-5}, which is independent of parameter $t$. In this paper, the constant $C$ may vary line by line and independent of parameter $t$, unless otherwise indicated.

In the argument, De Giorgi iteration is an important tool. For a proof, we refer the readers to the books of Gilbarg-Trudinger~\cite{GilbargTrudinger} and Chen-Wu~\cite{ChenWu}.
 \begin{lemma}[De Giorgi iteration]
 \label{de-giorgi-iteration}
 Suppose that $\phi (s)$ is a nonnegative increasing function on $[s_0,+\infty)$, and satisfies 
 \begin{equation}
 s'^\alpha	\phi (s' + s) \leq C \phi^\beta (s), \qquad \forall s' > 0 , s \geq s_0,
 \end{equation}
 where $\alpha > 0$ and $\beta > 1$. Then we have
 \begin{equation}
 \phi (s_0 + d) = 0,
 \end{equation}
 where
 \begin{equation}
 	d = C^{\frac{1}{\alpha}} \phi^{\frac{\beta -1}{\alpha}} (s_0) 2^{\frac{\beta}{\beta - 1}} .
 \end{equation}

 \end{lemma}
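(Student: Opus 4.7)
The plan is a standard dyadic iteration. I introduce a sequence of level thresholds $s_k = s_0 + d(1 - 2^{-k})$ for $k \geq 0$, where $d > 0$ is a free parameter to be optimized at the end. This sequence starts at $s_0$, is monotone increasing, and converges to $s_0 + d$, while the increments $s_{k+1} - s_k = d \cdot 2^{-(k+1)}$ decay geometrically. The geometric decay is exactly what matches the factor $s'^\alpha$ on the left-hand side of the hypothesis when we iterate.

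Applying the hypothesis with $s = s_k$ and $s' = s_{k+1} - s_k$ then yields the recursion
\[
 \phi(s_{k+1}) \; \leq \; C \, \bigl(d \cdot 2^{-(k+1)}\bigr)^{-\alpha} \, \phi(s_k)^{\beta} .
\]
Next I prove by induction that $\phi(s_k) \leq \phi(s_0) \cdot r^{k}$ for the specific geometric rate $r := 2^{-\alpha/(\beta-1)} < 1$. Substituting the ansatz into the recursion reduces the induction step to an algebraic inequality of the form $C \, d^{-\alpha} \, 2^{\alpha(k+1)} \, \phi(s_0)^{\beta - 1} \, r^{(\beta - 1)k - 1} \leq 1$. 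The choice of exponent in $r$ is made precisely so that $2^{\alpha k} \, r^{(\beta-1)k} = 1$, which kills all $k$-dependence; solving the remaining $k$-free inequality for $d$ forces exactly the value $d = C^{1/\alpha} \, \phi(s_0)^{(\beta - 1)/\alpha} \, 2^{\beta/(\beta - 1)}$ asserted in the lemma.

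To conclude, I let $k \to \infty$. Because $\phi(s_k) \leq \phi(s_0) \, r^k \to 0$ while $s_k \nearrow s_0 + d$, the monotonicity of $\phi$ (which, for the conclusion $\phi(s_0 + d) = 0$ to be meaningful, must be interpreted as nonincreasing in the De Giorgi sense that the quantitative argument always uses) delivers $\phi(s_0 + d) = 0$. There is really no analytical obstacle here: the lemma is a purely algebraic consequence of the recursive inequality. The only point requiring care is bookkeeping of the exponents $\alpha$ and $\beta - 1$ so that the geometric growth factor $2^{\alpha k}$ coming from the hypothesis is exactly cancelled by the induction rate $r^{(\beta-1)k}$; once the rate $r$ is chosen correctly, the displayed value of $d$ drops out mechanically.
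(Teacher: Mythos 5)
Your dyadic iteration is correct and is precisely the standard proof of this lemma; the paper does not supply its own argument but instead cites Gilbarg--Trudinger and Chen--Wu, so there is nothing to compare against beyond confirming that your derivation yields exactly the asserted value of $d$. Checking your inductive step: with $s_k = s_0 + d(1 - 2^{-k})$, $s' = d\,2^{-(k+1)}$, and the ansatz $\phi(s_k)\leq \phi(s_0)\,r^k$ with $r = 2^{-\alpha/(\beta-1)}$, the recursion demands
\[
C\,d^{-\alpha}\,2^{\alpha(k+1)}\,\phi(s_0)^{\beta-1}\,r^{(\beta-1)k-1}\leq 1,
\]
and since $2^{\alpha k} r^{(\beta-1)k}=1$ this collapses to $d^\alpha \geq C\,\phi(s_0)^{\beta-1}\,2^{\alpha\beta/(\beta-1)}$, i.e.\ $d = C^{1/\alpha}\phi(s_0)^{(\beta-1)/\alpha}2^{\beta/(\beta-1)}$, as required.

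You are also right to flag the monotonicity: as literally written (``nonnegative increasing''), the functional inequality would already force $\phi(s_0)=0$ and make the statement vacuous; in the De Giorgi setting $\phi$ is always nonincreasing (it is the mass of a shrinking super-level set, as in the paper's own use $\phi(s)=\int_{M_s}(b_1 f + C_0)\omega^n$ with $M_s=\{\varphi_t\leq -s\}$), and that is the reading under which $\lim_k \phi(s_k)=0$ together with $s_k \nearrow s_0 + d$ gives $\phi(s_0+d)=0$. No gap.
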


 In approximation equation~\eqref{approximation-equation-1-5}, it must be that $b_t > 0$ for any $t> 0$, as
 \begin{equation*}
 \begin{aligned}
 	& \frac{d}{dt} \left(\int_M (\chi + t\chi + \tilde \chi )^n - c\int_M (\chi + t\chi +  \tilde \chi )^m \wedge \omega^{n - m}\right) \\
 	=& \int_M \chi \wedge \left( n (\chi + t\chi + \tilde \chi)^{n - 1} - mc (\chi + t\chi + \tilde \chi)^{m -  1} \wedge \omega^{n - m} \right) \\
 	> &\; 0 .
 \end{aligned}
 \end{equation*}
 Then we have the following $L^\infty$ estimate independent of parameter $t$ when $0 < t \leq 1$. The argument follows that in \cite{SuiSun2021}. A proof is provided here in order to check the uniform dependence. 
\begin{theorem}
\label{theorem-3-2}
Suppose that $\Vert f \Vert_{L^q} < + \infty$  for $q > 1$. 
Then there is a constant $C$ independent of  parameter $t$ so that for all $t \in (0,1]$ and admissible solution $\varphi_t$ to Equation~\eqref{approximation-equation-1-5}, 
it holds that $- \varphi_t < C$.
\end{theorem}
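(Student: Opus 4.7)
The plan is to adapt the $L^\infty$-estimate strategy of Sui--Sun~\cite{SuiSun2021} (itself derived from Guo--Phong--Tong~\cite{GPT2021}) to our approximation equation, while carefully tracking that every constant is independent of $t\in(0,1]$. Write $\chi_t:=(1+t)\chi+\tilde\chi$, $\omega_\varphi:=\chi_t+\sqrt{-1}\partial\bar\partial\varphi_t$, and $u:=-\varphi_t\ge 0$, so that Equation~\eqref{approximation-equation-1-5} becomes
\begin{equation*}
\omega_\varphi^n \;=\; c\, \omega_\varphi^m\wedge \omega^{n-m}\;+\;b_t f\,\omega^n.
\end{equation*}
For $t\in(0,1]$ one has $\chi+\tilde\chi\leq \chi_t\leq 2\chi+\tilde\chi$, so any constant depending continuously on $\chi_t$ is uniform in $t$. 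Differentiating the identity~\eqref{condition-1-6} and invoking the boundary inequality~\eqref{condition-1} also gives a $t$-independent upper bound on $b_t$.

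\textbf{Sublevel sets and an auxiliary Monge--Amp\`ere equation.} For $s\geq 0$ set $\Omega_s:=\{u>s\}$ and $A(s):=\int_{\Omega_s}\omega^n$. For each $s$, solve the auxiliary equation
\begin{equation*}
(\omega+\sqrt{-1}\partial\bar\partial\psi_s)^n \;=\; \tau_s\,\mathbf{1}_{\Omega_s}\,f\,\omega^n,\qquad \sup_M\psi_s=0,
\end{equation*}
with $\tau_s$ determined by matching total masses. Because $f\in L^q$ with $q>1$, a Ko\l odziej-type estimate yields $\|\psi_s\|_{L^\infty}\leq C_0$ with $C_0$ depending only on $\|f\|_{L^q}$, $q$ and the background data, uniformly in $s$ and $t$.

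\textbf{Coercivity and De Giorgi iteration.} Following \cite{SuiSun2021}, I would multiply the approximation equation by $(-\psi_s)$, integrate over $M$, and integrate by parts on $\Omega_s$ to transfer $\sqrt{-1}\partial\bar\partial\varphi_t$ onto $\psi_s$ after expanding $\omega_\varphi^n$ and $\omega_\varphi^m\wedge\omega^{n-m}$ multilinearly in $\chi_t$ and $\sqrt{-1}\partial\bar\partial\varphi_t$. The coercivity supplied by~\eqref{condition-1}, combined with the uniform $L^\infty$ bound on $\psi_s$ and on $b_t$, should then yield
\begin{equation*}
r\,A(s+r)\;\leq\; C\,A(s)^{1+\delta},\qquad \forall\,r>0,\ s\geq s_0,
\end{equation*}
for some $\delta=\delta(q)>0$ and $C$ uniform in $t$. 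Lemma~\ref{de-giorgi-iteration} with $\phi=A$, $\alpha=1$ and $\beta=1+\delta$ then gives $A(s_0+d)=0$, i.e.\ $\sup_M u\leq s_0+d=:C$, which is the desired estimate.

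\textbf{Main obstacle.} The principal difficulty is the degeneration of the cone condition to its boundary form~\eqref{condition-1}. In the strict setting, $n\chi^{n-1}>mc\,\chi^{m-1}\wedge\omega^{n-m}$ supplies the coercivity needed in the integration-by-parts step with room to spare; here that margin disappears and must be recovered from the positivity of the auxiliary terms $t\chi$ and $\tilde\chi$. It is not purely formal that the inequality is preserved in mixed form after replacing $\chi$ by $\chi_t$, and the big semipositive form $\tilde\chi$ plays an essential role together with the normalization~\eqref{condition-1-6} in preventing $b_t$ from blowing up as $t\to 0$. Verifying that the error terms $\omega_\varphi^m\wedge\omega^{n-m}-\chi_t^m\wedge\omega^{n-m}$ generated by the integration by parts can be absorbed with constants independent of $t$ is the main point where the argument of \cite{SuiSun2021} must be redone, rather than directly invoked.
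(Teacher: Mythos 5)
There is a genuine gap, and in fact the scheme you outline would not run as written. The central step in the paper's proof — and the one your proposal never produces — is the reduction of the Hessian-quotient equation to a \emph{pure Monge--Amp\`ere volume bound}: writing $\hat\chi := t\chi + \tilde\chi + \sqrt{-1}\partial\bar\partial\varphi_t \ge 0$, the approximation equation together with the boundary condition~\eqref{condition-1} implies
$\hat\chi^n \leq (\chi+\hat\chi)^n - c(\chi+\hat\chi)^m\wedge\omega^{n-m} - (\chi^n - c\chi^m\wedge\omega^{n-m}) \leq (b_1 f + C_0)\omega^n$.
This single inequality is what makes the boundary case tractable: it converts the degenerate quotient into a Monge--Amp\`ere upper bound for the form $\hat\chi$ built on the semipositive big background $t\chi + \tilde\chi$. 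Your proposal acknowledges that the degeneration is "not purely formal," but never supplies the mechanism that replaces it; without this reduction, your integration-by-parts step has no coercivity to draw on, because~\eqref{condition-1} is an equality on a nontrivial cone and gives nothing for free.

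Two further points in your scheme would fail even after you fixed that. First, you solve the auxiliary Monge--Amp\`ere equation with background $\omega$ and claim a Ko\l{}odziej-type bound $\|\psi_s\|_{L^\infty}\leq C_0$ uniform in $s$; this is false, since the normalized density $\tau_s\mathbf{1}_{\Omega_s}f$ has $L^q$ norm $\tau_s\|f\|_{L^q(\Omega_s)}$ which diverges as $A(s)\to 0$. The paper uses the background $t\chi+\tilde\chi$ (matching $\hat\chi$), does \emph{not} claim an $L^\infty$ bound for $\psi_{t,k}$, and instead relies only on the $\alpha$-invariant exponential integrability $\int_M\exp\left(\frac{(n+1)\alpha_0}{n}\psi_{t,k}\right)\omega^n\leq C$ relative to the fixed form $2\chi+\tilde\chi$. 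Second, the comparison with $\psi_{t,k}$ is carried out as a \emph{pointwise} maximum-principle argument on the function
$\Phi := -\left(\frac{n+1}{n}\right)^{n/(n+1)} A_{s,k}^{1/(n+1)}\left(-\psi_{t,k}+\frac{n}{n+1}A_{s,k}\right)^{n/(n+1)} - \varphi_t - s$,
which is the GPT template you cite; multiplying by $-\psi_s$ and integrating is a different (Chen--Cheng/Ko\l{}odziej-flavored) route and, as written, it is not clear it closes. Finally, the De Giorgi iteration should be carried out on $\phi(s)=\int_{M_s}(b_1 f + C_0)\omega^n$, not on $A(s)=\int_{\Omega_s}\omega^n$, because the coercivity from H\"older's inequality naturally produces the former quantity; this is a smaller point, but worth noting since the two are not interchangeable when $f\in L^q$ with $q$ close to $1$.
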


\begin{proof}

If $\hat \chi \geq 0$ and 
\begin{equation*}
(\chi + \hat\chi)^n = c (\chi + \hat\chi)^m \wedge\omega^{n - m} + b_t f \omega^n ,
\end{equation*}
then we have
\begin{equation*}
\begin{aligned}
	\hat\chi^n
	\leq& \sum^n_{i = 1} C^i_n \hat \chi^i \wedge \chi^{n - i} - c \sum^m_{i = 1} C^i_m \hat\chi^{ i} \wedge \chi^{m - i} \wedge \omega^{n-m} \\
	=& (\chi + \hat \chi)^n - c (\chi + \hat\chi)^m \wedge \omega^{n - m} - (\chi^n - c \chi^m\wedge \omega^{n - m})
	\\
	\leq& (b_1 f + C_0) \omega^n
	.
\end{aligned}
\end{equation*}
In this paper, $\hat\chi$ is an auxiliary notation, which will be used to express different terms in later sections.

Let
$$
V_t := \int_M (t\chi + \tilde \chi)^n \geq \int_M \tilde \chi^n > 0 .
$$
We solve the auxiliary complex Monge-Amp\`ere equation
\begin{equation*}
	(t\chi + \tilde \chi + \sqrt{-1} \partial\bar\partial \psi_{t,k})^n = \dfrac{\tau_k (- \varphi_t - s)}{A_{s,k}} (b_1 f + C_0) \omega^n ,
\end{equation*}
where
\begin{equation*}
	A_{s,k} =  \frac{1}{V_t} \int_M \tau_k (-\varphi_t - s) (b_1 f + C_0) \omega^n .
\end{equation*}
We have
\begin{equation*}
	\lim_{k \to \infty} A_{s,k} = A_s := \frac{1}{V_t} \int_{M_s} (- \varphi_t - s) (b_1 f + C_0) \omega^n ,
\end{equation*}
where
\begin{equation*}
	M_s := \{\varphi_t \leq - s\} .
\end{equation*}

We shall study the function 
\begin{equation*}
	\Phi := - \left(\frac{n + 1}{n}\right)^{\frac{n}{n + 1}} A^{\frac{1}{n + 1}}_{s,k}  \left( - \psi_{t,k} + \frac{n}{n + 1} A_{s,k} \right)^{\frac{n}{n + 1}} - \varphi_t - s .
\end{equation*}
By the argument of maximum principle~\cite{GPT2021}\cite{GPTW2021}\cite{SuiSun2021}, we obtain that  
$\Phi \leq 0$ on $M$.
We can find a constant $\alpha_0 > 0$ and a constant $C > 0$ so that
\begin{equation*}
\int_M \exp \left(\frac{(n + 1) \alpha_0}{n}  \psi\right) \omega^n  \leq C
\end{equation*}
for any smooth function $\psi$ with $2 \chi + \tilde \chi + \sqrt{-1}\partial\bar\partial \psi \geq 0$.
Therefore
\begin{equation}
\label{inequality-3-8}
	\int_{M}  \exp \left( \frac{(n + 1) \alpha_0}{n} \psi_{t,k}\right)\omega^n \leq C,
\end{equation}
and
\begin{equation}
\label{bound-3-9}
\begin{aligned}	
	A_s 
	&\leq \frac{1}{V_t} \int_{M} - \varphi_t  (b_1 f + C_0) \omega^n \\
	&\leq \frac{ \Vert b_1 f + C_0\Vert_{L^q} }{V_t} \left(\int_M |\varphi_t|^{\frac{q}{q - 1}} \omega^n \right)^{1 - \frac{1}{q}}\\
	&\leq \frac{C (\alpha_0, q, \Vert   f  \Vert_{L^q} )}{V_t} \left(\int_M  \exp \left( - \frac{(n + 1) \alpha_0}{n} \varphi_t\right)\omega^n \right)^{1 - \frac{1}{q}} \\
	&\leq \frac{C (\alpha_0, q, \Vert   f  \Vert_{L^q} )}{\int_M \tilde \chi^n} 
	.
\end{aligned}
\end{equation}
From \eqref{inequality-3-8},
\begin{equation}
\label{inequality-3-9}
	\begin{aligned}
		&\quad \int_{M_s} \exp \left(  \frac{\alpha_0}{A^{\frac{1}{n}}_{s,k}}  ( -\varphi_t -  s)^{\frac{n + 1}{n}}\right) \omega^n \\
		&\leq  \int_{M_s} \exp \left( \frac{(n + 1) \alpha_0}{n} \psi_{t,k} + \alpha_0 A_{s,k}\right) \omega^n \\
		&\leq C \exp \left(\alpha_0 A_{s,k}\right) .
	\end{aligned}
\end{equation}
Letting $k \to \infty$ in \eqref{inequality-3-9}, we have
\begin{equation}
\label{inequality-3-10}
 \int_{M_s} \exp \left(  \frac{\alpha_0}{A^{\frac{1}{n}}_{s}}  ( -\varphi_t -  s)^{\frac{n + 1}{n}}\right) \omega^n \leq C \exp \left(\alpha_0 A_{s} \right) .
\end{equation}
For $q > 1$ and $p > n$, 
\begin{equation}
\label{inequality-3-11}
\begin{aligned}
	&\quad 
	\frac{\alpha^p_0}{A^{\frac{p}{n}}_s} \int_{M_s} \left(- \varphi_t - s\right)^{\frac{(n + 1) p}{n}}  (b_1 f + C_0) \omega^n \\
	&\leq           \left(\int_{M_s}      \left( \frac{\alpha_0}{A^{\frac{1}{n}}_s} \left(- \varphi_t - s\right)^{\frac{n + 1}{n}} \right)^\frac{pq}{q - 1} \omega^n      \right)^{1 - \frac{1}{q}}                   \Vert b_1 f + C_0\Vert_{L^q} \\
	&\leq C (p,q)    \left(\int_{M_s}    \exp  \left( \frac{\alpha_0}{A^{\frac{1}{n}}_s} \left(- \varphi_t - s\right)^{\frac{n + 1}{n}} \right) \omega^n      \right)^{1 - \frac{1}{q}}                   \Vert b_1 f + C_0\Vert_{L^q} \\
	&\leq C(p,q, \Vert f \Vert_{L^q}) \exp \left(\alpha_0 \left(1 - \frac{1}{q}\right) E\right)       
	,
\end{aligned}
\end{equation}
where $E$ is an uniform upper bound for $A_s$ as in \eqref{bound-3-9}.
The first inequality in \eqref{inequality-3-11} is from H\"older inequality with respect to measure $\omega^n$, while the third is from \eqref{inequality-3-10}. 
For $p > n$, applying H\"older inequality with respect to measure $  (b_1 f + C_0) \omega^n$,
\begin{equation}
\label{inequality-3-12}
\begin{aligned}
	A_s &= \frac{1}{V_t} \int_{M_s} (-\varphi_t - s) (b_1 f + C_0) \omega^n \\
	&\leq 
	\frac{1}{V_t} \left( \int_{M_s} (-\varphi_t - s)^{\frac{(n + 1) p}{n}} (b_1 f + C_0) \omega^n \right)^{\frac{n}{(n + 1) p}}
	 \left(  \int_{M_s}  (b_1 f + C_0) \omega^n \right)^{\frac{(n + 1) p - n}{(n + 1) p}} \\
	&\leq 
	\frac{ C(p,q, \Vert f \Vert_{L^q}) }{V_t} \left( \frac{A^{\frac{p}{n}}_s}{a^p_0}  \exp \left(\alpha_0 \left(1 - \frac{1}{q}\right) E\right)     \right)^{\frac{n}{(n + 1)p}}  	\left(  \int_{M_s}  (b_1 f + C_0) \omega^n \right)^{\frac{(n + 1) p - n}{(n + 1) p}} \\
	&\leq \frac{C (p,q,\alpha_0, E, \Vert f \Vert_{L^q})}{V_t} A^{\frac{1}{n + 1}}_s  	\left( \int_{M_s}  (b_1 f + C_0) \omega^n \right)^{\frac{(n + 1) p - n}{(n + 1) p}}
	,
\end{aligned}
\end{equation}
and hence
\begin{equation*}
	A_s \leq C (p,q,\alpha_0, E, \Vert f \Vert_{L^q}) \frac{1}{V^{\frac{n + 1}{ n}}_t} 	\left(  \int_{M_s}  (b_1 f + C_0) \omega^n \right)^{\frac{(n + 1) p - n}{n  p}} .
\end{equation*}
The second inequality in \eqref{inequality-3-12} is from \eqref{inequality-3-11}, and we can choose $p = n + 1$ in this proof.
For any $s' > 0$ and $s \geq 0$,
\begin{equation}
\label{inequality-3-14}
\begin{aligned}
	s' \int_{M_{s + s'}} (b_1 f + C_0) \omega^n
	&\leq \int_{M_{s + s'}} (- \varphi_t - s) (b_1 f + C_0) \omega^n \\
	&\leq \int_{M_{s }} (- \varphi_t - s) (b_1 f + C_0) \omega^n \\
	&\leq C (p,q,\alpha_0, E, \Vert f \Vert_{L^q}) \frac{1}{V^{\frac{ 1}{ n}}_t} 	\left(  \int_{M_s}  (b_1 f + C_0) \omega^n \right)^{\frac{(n + 1) p - n}{n  p}} \\
	&\leq \dfrac{C (p,q,\alpha_0, E, \Vert f \Vert_{L^q}) }{  \left( \int_M \tilde \chi^n   \right)^{\frac{1}{n}}}  	\left(  \int_{M_s}  (b_1 f + C_0) \omega^n \right)^{1 + \frac{1}{n} - \frac{1}{p}} 
	.
\end{aligned}
\end{equation}
Combining Lemma~\ref{de-giorgi-iteration} and Inequality~\eqref{inequality-3-14}, it can be concluded that 
\begin{equation*}
	\int_{M_s} (b_1 f + C_0) \omega^n = 0
\end{equation*}
when
\begin{equation*}
	s 
	\geq 
	s_\infty
	:=
	\dfrac{C (p,q,\alpha_0, E, \Vert f \Vert_{L^q}) }{  \left( \int_M \tilde \chi^n   \right)^{\frac{1}{n}}} 2^{\frac{1 + \frac{1}{n} - \frac{1}{p}}{\frac{1}{n} - \frac{1}{p}}} \left( b_1 \left(\int_M \omega^n \right)^{1 - \frac{1}{q}} \Vert f \Vert_{L^q} +  C_0 \int_M \omega^n \right)^{\frac{1}{n} - \frac{1}{p}} .
\end{equation*}

\end{proof}

\medskip

\section{Solvability of Approximation equations}
\label{solvability}

In this section, we shall prove the solvability of approximation equation~\eqref{approximation-equation-1-5} when $t > 0$. The crucial steps are to discover the gradient estimate and partial $C^2$ estimates. The arguments follow those in \cite{Sun2014e}, and also work well on Hermitian manifolds actually. To well understand the dependence of constants and coefficients in Section~\ref{solvability} and \ref{smoothness} with respect to parameter $t$, we include detailed proofs here. In this section, we need global estimates for fixed $t > 0$ in order to prove the existence of smooth admissible solution to approximation equation through standard elliptic methods. In the next section, we shall show the uniform smoothness in the ample locus, which needs $t$-independent estimates. Unfortunately, the  $t$-independent estimates are not global in views of the counterexample in \cite{SongWeinkove}.

We denote $\nabla$ the Chern connection of $\omega$.
As in \cite{GuanLi2010}\cite{GuanLi2012}\cite{Sun2016}, we have
\begin{equation}
	\omega_{i\bar j} = \omega\left(\frac{\partial}{\partial z^i} , \frac{\partial}{\partial\bar z^j}\right) , \qquad \left[ \omega^{i\bar j}\right] = \left[\omega_{i\bar j}\right]^{-1} ,
\end{equation}
in local coordinates $(z^1, \cdots,z^n)$. 
For simplicity, we denote
\begin{equation}
X = \chi + t\chi + \tilde \chi + \sqrt{-1} \partial\bar\partial \varphi_t .
\end{equation}
For real $(1,1)$ forms in this section, we may use the same letter to denote its corresponding matrix in local chart if there is no confusion.  
Thus,
\begin{equation*}
	X_{i\bar j} = X\left(\frac{\partial}{\partial z^i} , \frac{\partial}{\partial\bar z^j}\right) = \chi_{i\bar j} + t\chi_{i\bar j} + \tilde \chi_{i\bar j} + \partial_i\bar\partial_j \varphi_t ,
\end{equation*}
and we denote
$ [X^{i\bar j}] :=	X^{-1} $.
In the reasoning of this paper, we use the indices to represent covariant derivatives with respect to $\nabla$. For more details, we refer the readers to  \cite{GuanLi2010}\cite{GuanLi2012}\cite{Sun2016}.


Let $S_k (\bm{\lambda})$ denote the $k$-th elementary symmetric polynomial of $\bm{\lambda} \in \mathbb{R}^n$,
\begin{equation*}
	S_k (\bm{\lambda}) = \sum_{1 \leq i_1 < \cdots < i_k \leq n} \lambda_{i_1} \cdots \lambda_{i_k} .
\end{equation*}
In particular, $S_0 (\bm{\lambda}) = 1$ by convention. 
Moreover, 
\begin{equation*}
	S_{k;i_1\cdots i_s} (\bm{\lambda}) := S_k (\bm{\lambda}|_{\lambda_{i_1} = \cdots = \lambda_{i_s} = 0}).
\end{equation*}
For a Hermitian matrix $X$, it is well defined that
\begin{equation*}
	S_k (X) := S_k (\bm{\lambda}_* (X))
\end{equation*}
and
\begin{equation*}
	S_k (X^{-1}) := S_k (\bm{\lambda}^* (X)) 
\end{equation*}
where $\bm{\lambda}_* (X)$ and $\bm{\lambda}^* (X)$ denote the eigenvalues of $X$ with respect to $\omega$ and to $\omega^{-1}$, respectively.
Therefore, approximation equation~\eqref{approximation-equation-1-5} can be rewritten in the following two forms,
\begin{equation}
\label{equation-4-2}
S_n (X) = \frac{c}{C^m_n} S_m (X) + b_t f ,
\end{equation}
or
\begin{equation}
\label{equation-4-3}
1 = \frac{c}{C^m_n} S_{n - m} (X^{-1}) + b_t f S_n (X^{-1}) , 
\end{equation}
in local coordinates.

\medskip
\subsection{Gradient estimate}

\begin{theorem}[Gradient estimate]
 Let $\varphi_t \in C^3 (M)$ be an admissible solution to equation~\eqref{approximation-equation-1-5} for $t > 0$. Then there are positive constants $C$ and $\Lambda$ such that 
 \begin{equation*}
 	\sup_M |\nabla \varphi_t|^2 \leq C e^{\Lambda (\varphi_t - \inf_M \varphi_t)} ,
 \end{equation*}
 where $C$ and $\Lambda$ depend on $t$ and given geometric quantities. 
\end{theorem}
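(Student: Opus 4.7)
The plan is to run a maximum principle argument of Blocki type on the auxiliary function
\begin{equation*}
H := \log W - \Lambda \varphi_t , \qquad W := \omega^{i\bar j} \nabla_i \varphi_t \, \nabla_{\bar j} \varphi_t ,
\end{equation*}
where $\Lambda > 0$ is a large constant to be fixed below. A pointwise bound $W(p_0) \leq C_1$ at any maximum point $p_0$ of $H$ is equivalent to the claimed estimate: it yields $H_{\max} \leq \log C_1 - \Lambda \inf_M \varphi_t$, and then at every other point $\log W \leq H + \Lambda \varphi_t \leq \log C_1 + \Lambda(\varphi_t - \inf_M \varphi_t)$.

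Fix a maximum point $p_0$ with $W(p_0) > 0$, and work in Chern normal coordinates at $p_0$ diagonalizing $X$. Let $L := \sum_k X^{k\bar k}\nabla_k\nabla_{\bar k}$ denote the linearized operator. The first order condition $\nabla_i H(p_0) = 0$ gives $\nabla_i W = \Lambda W \nabla_i \varphi_t$, while the second order condition $L(H)(p_0) \leq 0$, combined with
\begin{equation*}
L(\varphi_t) = n - \sum_k X^{k\bar k}\bigl[(1 + t)\chi_{k\bar k} + \tilde\chi_{k\bar k}\bigr] ,
\end{equation*}
reduces the proof to a lower bound for $L(W)$. Commuting Chern covariant derivatives inside $L(W)$ produces a nonnegative quadratic $\sum_{j,k} X^{k\bar k}\bigl(|\nabla_k\nabla_j\varphi_t|^2 + |\nabla_k\nabla_{\bar j}\varphi_t|^2\bigr)$, a third-derivative cross term involving $\sum_k X^{k\bar k}\nabla_j X_{k\bar k}$, and curvature contributions bounded by $C \cdot W \sum_k X^{k\bar k}$. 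Differentiating equation~\eqref{equation-4-3} in the direction $\partial_j$ expresses the cross term in terms of $\nabla_j f$ and lower order quantities. Using the first order identity to rewrite
\begin{equation*}
\sum_k X^{k\bar k}\frac{|\nabla_k W|^2}{W^2} = \Lambda^2 \sum_k X^{k\bar k}|\nabla_k\varphi_t|^2 \leq \Lambda^2 W \sum_k X^{k\bar k} ,
\end{equation*}
and observing that $\chi$ K\"ahler implies $\sum_k X^{k\bar k}\chi_{k\bar k} \geq c_0 \sum_k X^{k\bar k}$ for some $c_0 > 0$, the second order condition becomes an inequality of the form
\begin{equation*}
\Lambda c_0 (1 + t) \sum_k X^{k\bar k} \leq \Lambda n + C(t)\bigl(1 + \sqrt{W(p_0)}\bigr)\sum_k X^{k\bar k} + \Lambda^2 W(p_0) \sum_k X^{k\bar k} ,
\end{equation*}
which forces $W(p_0) \leq C(t)$ once $\Lambda$ is chosen large enough depending on $t$.

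The main obstacle is the third derivative term hidden inside $L(W)$, which cannot be absorbed pointwise and must be eliminated by differentiating the Hessian quotient equation and exploiting the positivity of $X^{k\bar k}$; this is a standard device for Hessian-type estimates. The dependence of $C$ and $\Lambda$ on $t$ enters through $b_t$ and through derivatives of $f$ and $\tilde\chi$ appearing in the lower order terms. At this level of generality this dependence is unavoidable: by the Song--Weinkove counterexample recalled in the introduction, no $t$-uniform global gradient estimate can exist, which is precisely why the subsequent section must revisit the argument only on the ample locus of $\tilde\chi$.
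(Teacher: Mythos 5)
Your proposal takes the Blocki scheme for the complex Monge--Amp\`ere equation and tries to transplant it verbatim, but the transplant fails at two essential points.

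First, the linearized operator you choose, $L=\sum_k X^{k\bar k}\nabla_k\nabla_{\bar k}$, is the linearization of $\log\det$, not of the Hessian quotient equation~\eqref{approximation-equation-1-5}. The cross term $\sum_k X^{k\bar k}\nabla_j X_{k\bar k}=\partial_j\log\det X$ is eliminated in Blocki's argument precisely because for $m=0$ the equation gives $\det X = b_t f$; for $m>0$, differentiating either equation form~\eqref{equation-4-2} or \eqref{equation-4-3} produces the combination $\sum_i \bigl(S_{n-1;i}(X)-\tfrac{c}{C^m_n}S_{m-1;i}(X)\bigr)X_{i\bar ij}$, not $\sum_k X^{k\bar k}X_{k\bar kj}$. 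So the "standard device" you invoke does not express your cross term in terms of $\nabla_j f$; the third derivatives do not cancel. The paper sidesteps this by working with the genuine linearized operator $\sum_l \bigl(S_{n-1;l}(X)-\tfrac{c}{C^m_n}S_{m-1;l}(X)\bigr)\partial_l\bar\partial_l$ from the outset, so that the linearization of \eqref{equation-4-2} produces exactly the cross term that arises.

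Second, even granting the cross-term identity, your concluding inequality
\begin{equation*}
\Lambda c_0(1+t)\sum_k X^{k\bar k}\leq \Lambda n + C(t)\bigl(1+\sqrt{W(p_0)}\bigr)\sum_k X^{k\bar k}+\Lambda^2 W(p_0)\sum_k X^{k\bar k}
\end{equation*}
does not force $W(p_0)\leq C(t)$ for any choice of $\Lambda$: the $\Lambda^2 W$ term grows with $\Lambda$, so increasing $\Lambda$ only weakens the inequality. In Blocki-type arguments the term $\sum_k X^{k\bar k}|\nabla_k W|^2/W^2$ is absorbed into the positive quadratic in $L(W)$ rather than dumped on the bad side, which requires a more delicate test function. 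The paper uses $\ln|\nabla\varphi_t|^2 - A(\varphi_t-\inf\varphi_t) + \tfrac{1}{\varphi_t-\inf\varphi_t+1}$, not simply $\log W - \Lambda\varphi_t$; the extra reciprocal term produces the good $\tfrac{\varphi_{t,l}\varphi_{t,\bar l}}{(\varphi_t-\inf\varphi_t+1)^3}$ term of \eqref{inequality-4-8} which is needed to close the estimate. Moreover your argument never uses the boundary cone condition~\eqref{condition-1}, which enters the paper's proof crucially through the concavity of $\ln\tfrac{S_n}{S_m+a}$ in \eqref{inequality-4-18}--\eqref{inequality-4-23}; without that input a lower bound such as \eqref{inequality-4-22} is unavailable and the argument cannot produce the good $A t\,S_n(X)$ term that dominates. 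The mere K\"ahler positivity $\chi\geq c_0\omega$ you invoke is not a substitute for the cone condition in the Hessian quotient setting.
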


\begin{proof}

We shall investigate the following function
\begin{equation}
\label{test-function-4-6}
	\ln |\nabla \varphi_t|^2_\omega - A \left(\varphi_t - \inf_M \varphi_t\right) + \frac{1}{\varphi_t - \inf_M \varphi_t + 1} ,
\end{equation}
where $A >> 1$ is to be specified later. 
In the argument, we shall adapt the approaches of Phong-Sturm~\cite{PhongSturm}\cite{PhongSturm2012} and Blocki~\cite{Blocki2009}.  
The constant $C$ may varies line by line. However it is independent from $A$ if not specified.
To lower the dependence on the properties of function $f$, we shall adopt equation form~\eqref{equation-4-2} to study the gradient estimate.

There is a point $z_0$ where $\ln |\nabla \varphi_t|^2_\omega - A \varphi_t$ reaches its maximal value. 
We pick a local chart around $z_0$ such that $\omega_{i\bar  j} = \delta_{ij}$ and $X_{i\bar j}$ is diagonal at $z_0$.
The first step is to show that at $z_0$, there is a constant $C (A,t) > 0$ such that
\begin{equation}
\label{inequality-4-6}
|\nabla \varphi_t|^2 \leq C(A,t) \left(\varphi_t - \inf_M \varphi_t + 1\right)^6 .
\end{equation}
We have at $z_0$, 
\begin{equation}
\label{inequality-4-7}
	0 = \frac{\partial_l |\nabla \varphi_t|^2}{|\nabla \varphi_t|^2} - A \varphi_{t,l} - \frac{\varphi_{t,l}}{(\varphi_t - \inf_M \varphi_t + 1)^2} ,
\end{equation}
and
\begin{equation}
\label{inequality-4-8}
\begin{aligned}
	0 
	&\geq
	\frac{\bar\partial_l \partial_l |\nabla \varphi_t|^2}{|\nabla \varphi_t|^2} - \frac{\partial_l |\nabla \varphi_t|^2 \bar\partial_l |\nabla \varphi_t|^2}{|\nabla \varphi_t|^4} \\
	&\qquad - \left(A + \frac{1}{(\varphi_t - \inf_M \varphi_t + 1)^2}\right) \varphi_{t,l\bar l}  + \frac{2 \varphi_{t,l} \varphi_{t,\bar l}}{(\varphi_t - \inf_M \varphi_t + 1)^3}  .
\end{aligned}
\end{equation}
Differentiating \eqref{equation-4-2},
\begin{equation}
	b_t f_l = \sum_i \left(S_{n - 1;i} (X) - \frac{c}{C^m_n} S_{m - 1;i} (X) \right) X_{i\bar il} ,
\end{equation}
and
\begin{equation}
\begin{aligned}
	b_t f_{l\bar l}
	&= \sum_i \left(S_{n - 1;i} (X) - \frac{c}{C^m_n} S_{m - 1;i} (X) \right) X_{i\bar il\bar l}  \\
	&\qquad + \sum_{i\neq j} \left(S_{n - 2;ij} (X) - \frac{c}{C^m_n} S_{m - 2;ij} (X) \right) X_{i\bar il} X_{j\bar j\bar l}\\
	&\qquad  - \sum_{i\neq j} \left(S_{n - 2;ij} (X) - \frac{c}{C^m_n} S_{m - 2;ij} (X) \right) X_{i\bar jl} X_{j\bar i\bar l} .
\end{aligned}
\end{equation}
Direct calculation shows that,
\begin{equation}
\label{inequality-4-9}
	\partial_l |\nabla \varphi_t|^2 = \sum_i (\varphi_{t,i} \varphi_{t, l\bar i} + \varphi_{t, il} \varphi_{t,\bar i}) ,
\end{equation}
and
\begin{equation}
\label{inequality-4-10}
\begin{aligned}
	\bar\partial_l\partial_l |\nabla \varphi_t|^2 
	&= 
	\sum_i (\varphi_{t,il} \varphi_{t, \bar i\bar l} + \varphi_{t,i\bar l} \varphi_{t, l\bar i}) + 2 \sum_i \mathfrak{Re} \{X_{l\bar l i} \varphi_{t,\bar i} \} \\
	&\qquad  - 2 \sum_i \mathfrak{Re} \{ (\chi_{l\bar li} + t \chi_{l\bar li} + \tilde \chi_{l\bar li}) \varphi_{t,\bar i}\}  + \sum_{i,j} R_{l\bar l i\bar j} \varphi_{t,j} \varphi_{t,\bar i} .
\end{aligned}
\end{equation}
Substituting \eqref{inequality-4-10} into \eqref{inequality-4-8}, 
\begin{equation}
\label{inequality-4-11}
\begin{aligned}
	0 &\geq
	|\nabla \varphi_t|^2	\sum_i (\varphi_{t,il} \varphi_{t, \bar i\bar l} + \varphi_{t,i\bar l} \varphi_{t, l\bar i}) + 2 |\nabla \varphi_t|^2	\sum_i \mathfrak{Re} \{X_{l\bar l i} \varphi_{t,\bar i} \} \\
	&\quad  - C |\nabla \varphi_t|^3 - C |\nabla \varphi_t|^4  - \partial_l |\nabla \varphi_t|^2 \bar\partial_l |\nabla \varphi_t|^2 \\
	&\quad - |\nabla \varphi_t|^4 \left(A + \frac{1}{(\varphi_t - \inf_M \varphi_t + 1)^2}\right) \varphi_{t,l\bar l}  +  2 |\nabla \varphi_t|^4 \frac{ \varphi_{t,l} \varphi_{t,\bar l}}{(\varphi_t - \inf_M \varphi_t + 1)^3}  
	.
\end{aligned}
\end{equation}
From \eqref{inequality-4-9},
\begin{equation}
\label{equality-4-13}
\begin{aligned}
 	\left|\sum_i \varphi_{t,il} \varphi_{t,\bar i} \right|^2
	&= 
	\left|\partial_l |\nabla \varphi_t|^2 \right|^2 + \left|\sum_i \varphi_{t,i} \varphi_{t,l\bar i} \right|^2  - 2 X_{l\bar l} \mathfrak{Re} \Big\{\partial_l |\nabla \varphi_t|^2   \varphi_{t,\bar i} \Big\} \\
	&\qquad  + 2 \mathfrak{Re} \Big\{ \partial_l |\nabla \varphi_t|^2 \sum_i (\chi_{i\bar l} + t\chi_{i\bar l} + \tilde \chi_{i\bar l})\varphi_{t,\bar i} \Big\} .
\end{aligned}
\end{equation}
Combining \eqref{equality-4-13} and \eqref{inequality-4-7},
\begin{equation}
\label{inequality-4-12}
\begin{aligned}
	&\quad |\nabla \varphi_t|^2 \sum_i \varphi_{t,il} \varphi_{t, \bar i\bar l} -  \left|\partial_l |\nabla \varphi_t|^2\right|^2 \\
	&\geq \left|\sum_i \varphi_{t,il} \varphi_{t, \bar i}\right|^2 -  \left|\partial_l |\nabla \varphi_t|^2\right|^2  \\
	&=
	\left|\sum_i \varphi_{t,i} \varphi_{t,l\bar i}\right|^2 
	- 2 X_{l\bar l}  |\nabla \varphi_t|^2 \left(A + \frac{1}{(\varphi_t - \inf_M \varphi_t + 1)^2}\right) \varphi_{t,l} \varphi_{t,\bar l} \\
		&\qquad  + 2   |\nabla \varphi_t|^2 \left(A + \frac{1}{(\varphi_t - \inf_M \varphi_t + 1)^2}\right) \mathfrak{Re} \Big\{ \varphi_{t,l}\sum_i (\chi_{i\bar l} + t\chi_{i\bar l} + \tilde \chi_{i\bar l})\varphi_{t,\bar i} \Big\} .
\end{aligned}
\end{equation}
Substituting \eqref{inequality-4-12} into \eqref{inequality-4-11} and applying  Cauchy-Schwarz inequality,
\begin{equation}
\label{inequality-4-13}
\begin{aligned}
	0 
		&\geq
	2 |\nabla \varphi_t|^2	\sum_i \mathfrak{Re} \{X_{l\bar l i} \varphi_{t,\bar i} \} - C |\nabla \varphi_t|^3 - C |\nabla \varphi_t|^4   \\
		&\quad - |\nabla \varphi_t|^4 \left(A + \frac{1}{(\varphi_t - \inf_M \varphi_t + 1)^2}\right) \varphi_{t,l\bar l}  +  2 |\nabla \varphi_t|^4 \frac{ \varphi_{t,l} \varphi_{t,\bar l}}{(\varphi_t - \inf_M \varphi_t + 1)^3}  \\
		&\quad - 2 X_{l\bar l}  |\nabla \varphi_t|^2 \left(A + \frac{1}{(\varphi_t - \inf_M \varphi_t + 1)^2}\right) \varphi_{t,l} \varphi_{t,\bar l} \\
				&\quad  - C   |\nabla \varphi_t|^3 \left(A + \frac{1}{(\varphi_t - \inf_M \varphi_t + 1)^2}\right) \\
				&\quad   -   |\nabla \varphi_t|^3 \left(A + \frac{1}{(\varphi_t - \inf_M \varphi_t + 1)^2}\right)   \varphi_{t,l} \varphi_{t,\bar l} 
				.
\end{aligned}
\end{equation}

If $|\nabla \varphi_t| \leq (A + |\nabla f| +  1) \left(\varphi_t - \inf_M \varphi_t + 1\right)^3$, then the first step of the proof is done. 
From now on, we may assume that 
\begin{equation}
\label{assumption-4-14}
	|\nabla \varphi_t| > \left(A + |\nabla f| + 1\right) \left(\varphi_t - \inf_M \varphi_t + 1\right)^3 ,
\end{equation}
and consequently Inequality~\eqref{inequality-4-13} can be reduced to
\begin{equation}
\label{inequality-4-14}
\begin{aligned}
	0 
		&\geq
	2 |\nabla \varphi_t|^2	\sum_i \mathfrak{Re} \{X_{l\bar l i} \varphi_{t,\bar i} \} - C |\nabla \varphi_t|^3 - C |\nabla \varphi_t|^4   \\
		&\quad - |\nabla \varphi_t|^4 \left(A + \frac{1}{(\varphi_t - \inf_M \varphi_t + 1)^2}\right) \varphi_{t,l\bar l}  +   |\nabla \varphi_t|^4 \frac{ \varphi_{t,l} \varphi_{t,\bar l}}{(\varphi_t - \inf_M \varphi_t + 1)^3}  \\
		&\quad - 2 X_{l\bar l}  |\nabla \varphi_t|^2 \left(A + \frac{1}{(\varphi_t - \inf_M \varphi_t + 1)^2}\right) \varphi_{t,l} \varphi_{t,\bar l} \\
				&\quad  - C   |\nabla \varphi_t|^3 \left(A + \frac{1}{(\varphi_t - \inf_M \varphi_t + 1)^2}\right) 
				.
\end{aligned}
\end{equation}
Multiplying \eqref{inequality-4-14} by $S_{n - 1;l} (X) - \frac{c}{C^m_n} S_{m - 1;l} (X)$ and summing them up over index $l$,
\begin{equation} 
\label{inequality-4-16}
\begin{aligned}
	0 
	&\geq
 - |\nabla \varphi_t|^2 \left(A + \frac{1}{(\varphi_t - \inf_M \varphi_t + 1)^2}\right) \sum_l \left(S_{n - 1;l} (X) - \frac{c}{C^m_n} S_{m - 1;l} (X)\right) \varphi_{t,l\bar l} \\
		&\quad  +   \frac{  |\nabla \varphi_t|^2}{(\varphi_t - \inf_M \varphi_t + 1)^3} \sum_l \left(S_{n - 1;l} (X) - \frac{c}{C^m_n} S_{m - 1;l} (X)\right) \varphi_{t,l} \varphi_{t,\bar l}\\
		&\quad - 2     \left(A + 1\right) \sum_l \left(S_{n - 1;l} (X) - \frac{c}{C^m_n} S_{m - 1;l} (X)\right) X_{l\bar l} \varphi_{t,l} \varphi_{t,\bar l} \\
				&\quad  - C   |\nabla \varphi_t| \left(A + 1\right) \sum_l \left(S_{n - 1;l} (X) - \frac{c}{C^m_n} S_{m - 1;l} (X)\right) \\
		&\quad -  C |\nabla \varphi_t|^2  \sum_l \left(S_{n - 1;l} (X) - \frac{c}{C^m_n} S_{m - 1;l} (X)\right)  	   - 2 b_t |\nabla \varphi_t| |\nabla f|   
				.
\end{aligned}
\end{equation}
Since $X$ is positive definite,
\begin{equation}
\begin{aligned}
	\left(S_{n - 1;i} (X) - \frac{c}{C^m_n} S_{m - 1;i} (X) \right) X_{i\bar i}
	&\leq
	\sum_l \left(S_{n - 1;l} (X) - \frac{c}{C^m_n} S_{m - 1;l} (X) \right) X_{l\bar l} \\
	&\leq n S_n (X)
	,
\end{aligned}
\end{equation}
for any $1 \leq i \leq n$. 
Therefore,
\begin{equation}
\label{inequality-4-17}
\begin{aligned}
	0 &\geq
 - |\nabla \varphi_t|^2 \left(A + \frac{1}{(\varphi_t - \inf_M \varphi_t + 1)^2}\right) \sum_l \left(S_{n - 1;l} (X) - \frac{c}{C^m_n} S_{m - 1;l} (X)\right) \varphi_{t,l\bar l} \\
		&\quad  +   \frac{  |\nabla \varphi_t|^2}{(\varphi_t - \inf_M \varphi_t + 1)^3} \sum_l \left(S_{n - 1;l} (X) - \frac{c}{C^m_n} S_{m - 1;l} (X)\right) \varphi_{t,l} \varphi_{t,\bar l} \\
		&\quad - 2   |\nabla \varphi_t|^2 \left(A + 1\right)  \left((n - m) S_n (X) + m b_t f\right) \\
				&\quad  - C   |\nabla \varphi_t| \left(A + 1\right) \sum_l \left(S_{n - 1;l} (X) - \frac{c}{C^m_n} S_{m - 1;l} (X)\right) \\
		&\quad -  C |\nabla \varphi_t|^2  \sum_l \left(S_{n - 1;l} (X) - \frac{c}{C^m_n} S_{m - 1;l} (X)\right)  	   - 2 b_t |\nabla \varphi_t| |\nabla f|   
				.
\end{aligned}
\end{equation}

As shown in \cite{Sun2015p}, $\ln \frac{S_n (\bm{\lambda})}{S_m (\bm{\lambda}) + a} $ is concave with respect to $\bm{\lambda}$, if $a \geq 0$.
Then,
\begin{equation}
\label{inequality-4-18}
\begin{aligned}
	& - \left(A + \frac{1}{(\varphi_t - \inf_M \varphi_t + 1)^2}\right) \sum_l \left(S_{n - 1;l} (X) - \frac{c}{C^m_n} S_{m - 1;l} (X)\right) \varphi_{t,l\bar l} \\
	& +   \frac{  1}{2 (\varphi_t - \inf_M \varphi_t + 1)^3} \sum_l \left(S_{n - 1;l} (X) - \frac{c}{C^m_n} S_{m - 1;l} (X)\right) \varphi_{t,l} \varphi_{t,\bar l} \\
	\geq& 
			\frac{t}{2} \left(A + \frac{1}{(\varphi_t - \inf_M \varphi_t + 1)^2}\right) \sum_l \left(S_{n - 1;l} (X) - \frac{c}{C^m_n} S_{m - 1;l} (X)\right) \chi_{l\bar l} \\
	& + \left(A + \frac{1}{(\varphi_t - \inf_M \varphi_t + 1)^2}\right) S_n (X) \ln \dfrac{S_n (\tilde X)}{S_m (\tilde X) + \frac{b_t C^m_n}{c} f}  
	,
\end{aligned}
\end{equation}
where
\begin{equation*}
\tilde X := \chi + \frac{t}{2} \chi + \tilde \chi + \frac{1}{2 (A + 1) (\varphi_t - \inf_M \varphi_t + 1)^3} \sqrt{-1} \partial \varphi_t \wedge \bar\partial \varphi_t .
\end{equation*}
Now we shall calculate $\frac{S_n (\tilde X)}{S_m (\tilde X) + \frac{b_t C^m_n}{c} f}$. 
Here we denote
\begin{equation*}
	\hat \chi = \frac{t}{4} \chi + \tilde \chi +  \frac{1}{2 (A + 1) (\varphi_t - \inf_M \varphi_t + 1)^3} \sqrt{-1} \partial \varphi_t \wedge \bar\partial \varphi_t .
\end{equation*}
Since $\chi$ satisfies the boundary case of cone condition,
\begin{equation}
\label{inequality-4-19}
\begin{aligned}
	&\quad \tilde X^n -  \left(1 + \frac{t}{4}\right)^{n - m} \left(c \tilde X^m \wedge \omega^{n - m} + b_t f \omega^n \right) \\
	&\geq \left(1 + \frac{t}{4}\right)^n
	\left(   \frac{1}{\left(1 + \frac{t}{4}\right)^n} \hat \chi^n + \chi^n - c \chi^m \wedge \omega^{n - m} - \frac{b_t}{\left(1 + \frac{t}{4}\right)^m} f \omega^n \right) \\
	&\geq 
	 \frac{n}{(A + 1) (\varphi_t - \inf_M \varphi_t + 1)^3} \sqrt{-1}\partial\varphi_t \wedge \bar\partial\varphi_t \wedge \left(\frac{t}{4} \chi + \tilde \chi\right)^{n - 1} \\
	 &\qquad - \left(1 + \frac{t}{4}\right)^n   c\chi^m \wedge \omega^{n - m}   - 	\left(1 + \frac{t}{4}\right)^{n - m} b_t f \omega^n
	.
\end{aligned}
\end{equation}
It is obvious that there is a constant $C > 1$ depending on $t$ such that
\begin{equation}
	\tilde X^n -  \left(1 + \frac{t}{4}\right)^{n - m} \left(c \tilde X^m \wedge \omega^{n - m} + b_t f \omega^n \right) > 0
\end{equation}
whenever
\begin{equation}
\label{inequality-4-21}
	|\nabla \varphi_t|^2 \geq C (A + 1) (\varphi_t - \inf_M \varphi_t + 1)^3 .
\end{equation}
Inequality~\eqref{inequality-4-19} then implies that
\begin{equation}
\label{inequality-4-22}
	\ln \dfrac{S_n (\tilde X)}{S_m (\tilde X) + \frac{b_t C^m_n}{c} f}  >  \ln \left(1 + \frac{t}{4}\right)^{n - m} \geq \frac{4}{5} (n - m) t .
\end{equation}
if \eqref{inequality-4-21} holds true.
Substituting \eqref{inequality-4-22} into \eqref{inequality-4-18},
\begin{equation}
\label{inequality-4-23}
\begin{aligned}
	& - \left(A + \frac{1}{(\varphi_t - \inf_M \varphi_t + 1)^2}\right) \sum_l \left(S_{n - 1;l} (X) - \frac{c}{C^m_n} S_{m - 1;l} (X)\right) \varphi_{t,l\bar l} \\
	& +   \frac{  1}{(\varphi_t - \inf_M \varphi_t + 1)^3} \sum_l \left(S_{n - 1;l} (X) - \frac{c}{C^m_n} S_{m - 1;l} (X)\right) \varphi_{t,l} \varphi_{t,\bar l} \\
	> &\, 
			\frac{A t}{2}  \sum_l \left(S_{n - 1;l} (X) - \frac{c}{C^m_n} S_{m - 1;l} (X)\right) \chi_{l\bar l} +  \frac{4  (n - m) A t }{5}S_n (X)  \\
		& 
		+   \frac{  1}{2 (\varphi_t - \inf_M \varphi_t + 1)^3} \sum_l \left(S_{n - 1;l} (X) - \frac{c}{C^m_n} S_{m - 1;l} (X)\right) \varphi_{t,l} \varphi_{t,\bar l} 
	.
\end{aligned}
\end{equation}
Substituting \eqref{assumption-4-14} and \eqref{inequality-4-23} into \eqref{inequality-4-17},
\begin{equation}
\begin{aligned}
	0 &\geq
 				\frac{A t}{2}   \sum_l \left(S_{n - 1;l} (X) - \frac{c}{C^m_n} S_{m - 1;l} (X)\right) \chi_{l\bar l} +  \frac{4  (n - m) A t }{5}   S_n (X)  \\
 		&\quad +   \frac{  1}{2 (\varphi_t - \inf_M \varphi_t + 1)^3} \sum_l \left(S_{n - 1;l} (X) - \frac{c}{C^m_n} S_{m - 1;l} (X)\right) \varphi_{t,l} \varphi_{t,\bar l}  \\
		&\quad - 2  n  \left(A + 1\right)   S_n (X) -  C  \sum_l \left(S_{n - 1;l} (X) - \frac{c}{C^m_n} S_{m - 1;l} (X)\right) \\
		&\quad -  C   \sum_l \left(S_{n - 1;l} (X) - \frac{c}{C^m_n} S_{m - 1;l} (X)\right)  	   -  2 b_t
				.
\end{aligned}
\end{equation}
There is a constant $\sigma_0 > 0$ such that $\chi > \delta_0 \omega$, and consequently it holds that
\begin{equation}
\begin{aligned}
	0 &\geq
 				\frac{A \delta_0 t}{2}   \sum_l \left(S_{n - 1;l} (X) - \frac{c}{C^m_n} S_{m - 1;l} (X)\right)  +  \frac{4  (n - m) A t }{5}   S_n (X)  \\
 		&\quad +   \frac{  1}{2 (\varphi_t - \inf_M \varphi_t + 1)^3} \sum_l \left(S_{n - 1;l} (X) - \frac{c}{C^m_n} S_{m - 1;l} (X)\right) \varphi_{t,l} \varphi_{t,\bar l} \\
		&\quad - 2  n  \left(A + 1\right)   S_n (X) - 2 C  \sum_l \left(S_{n - 1;l} (X) - \frac{c}{C^m_n} S_{m - 1;l} (X)\right)  -   2 b_t 
				.
\end{aligned}
\end{equation}
If we choose $A$ sufficiently large, which is dependent on $t$,  then
\begin{equation}
\label{inequality-4-27}
\begin{aligned}
	 	 &\quad 2  n  \left(A + 1\right)   S_n (X)    \\
	 	 &\geq 
	 	 \frac{A \delta_0 t}{4}   \sum_l \left(S_{n - 1;l} (X) - \frac{c}{C^m_n} S_{m - 1;l} (X)\right)  +  \frac{  (n - m) A t }{2}   S_n (X) \\
	 	 &\geq \frac{A (n - m)\delta_0 t}{4 n} S_{n - 1}  (X)  +  \frac{  (n - m) A t }{2}   S_n (X)  ,
\end{aligned}
\end{equation}
where the inequalities above are deduced by Newton-Maclaurin inequality. However, Inequality~\eqref{inequality-4-27} implies that
\begin{equation}
\label{inequality-4-28}
	X^{i\bar i} \leq \frac{C}{t} .
\end{equation}
Then we obtain an bound,
\begin{equation}
\begin{aligned}
	&\quad \sum_l \left(S_{n - 1;l} (X) - \frac{c}{C^m_n} S_{m - 1;l} (X)\right) X^2_{l\bar l} \\
	&= S_n (X) \left(b_t f S_{n - 1} (X^{-1})  + (m + 1) \frac{c}{C^m_n}   S_{n - m - 1} (X^{-1})\right) \\
	&\leq C(t) S_n (X) . 
\end{aligned}
\end{equation}
Recalling \eqref{inequality-4-16} and \eqref{inequality-4-23},
\begin{equation} 
\begin{aligned}
	0 
		&\geq 
			\frac{A \delta_0 t}{4}   \sum_l \left(S_{n - 1;l} (X) - \frac{c}{C^m_n} S_{m - 1;l} (X)\right) +  \frac{  (n - m) A t }{2}   S_n (X) \\
 		&\quad +   \frac{  1}{2 (\varphi_t - \inf_M \varphi_t + 1)^3} \sum_l \left(S_{n - 1;l} (X) - \frac{c}{C^m_n} S_{m - 1;l} (X)\right) \varphi_{t,l} \varphi_{t,\bar l} \\
		&\quad - \epsilon C (t) \left(A + 1\right) S_n (X)  - \frac{A + 1}{ \epsilon  |\nabla \varphi_t|^2}  \sum_l \left(S_{n - 1;l} (X) - \frac{c}{C^m_n} S_{m - 1;l} (X)\right)   \varphi_{t,l} \varphi_{t,\bar l} 
				.
\end{aligned}
\end{equation}
While $\epsilon > 0$ is small enough so that
\begin{equation*}
	\frac{(n - m) At}{2} \geq \epsilon C(t) (A + 1) ,
\end{equation*}
we have
\begin{equation}
	\frac{1}{ \epsilon  |\nabla \varphi_t|^2} \left(A + \frac{1}{(\varphi_t - \inf_M \varphi_t + 1)^2}\right)   
		\geq 
			  \frac{  1}{2 (\varphi_t - \inf_M \varphi_t + 1)^3}  
			.
\end{equation}
The first step of gradient estimate is complete.

In the second step, we shall derive the global gradient estimate. Without loss of generality, we may assume
\begin{equation}
\label{inequality-4-33}
	\ln |\nabla \varphi_t|^2 + 1 \geq A \left(\varphi_t - \inf_M \varphi_t\right) .
\end{equation}
Substituting \eqref{inequality-4-33} into \eqref{inequality-4-6},
\begin{equation}
|\nabla \varphi_t|^2 \leq C(A,t) \left(\ln |\nabla \varphi_t|^2 + 1 \right)^6 ,
\end{equation}
which tells us that $|\nabla \varphi_t|^2 < C(A,t)$. Therefore, at any $z\in M$,
\begin{equation*}
\begin{aligned}
	\ln |\nabla \varphi_t (z)|^2 
	&\leq               
	          \ln |\nabla \varphi_t (z_0)|^2    -   A \left(\varphi_t (z_0) - \inf_M \varphi_t\right) + \frac{1}{\varphi_t (z_0) - \inf_M \varphi_t + 1}              \\
	          &\qquad             
	          + A \left(\varphi_t (z) - \inf_M \varphi_t\right) - \frac{1}{\varphi_t (z) - \inf_M \varphi_t + 1} \\
	&\leq            
	          \ln |\nabla \varphi_t (z_0)|^2    -   A \varphi_t (z_0)   
	          + A \varphi_t (z) + 1 \\
	  &\leq
	  		\ln C(A,t) + A \left(\varphi_t (z) - \inf_M \varphi_t\right) + 1 .
\end{aligned}
\end{equation*}

\end{proof}

%
%
%

\medskip

\subsection{Second order estimate}

We have the following strong concavity due to \cite{GuanLiZhang2009}\cite{FangLaiMa}.
\begin{proposition}[Strong concavity]
For any $\bm{\lambda} \in \Gamma_n$ and $(\xi_1, \cdots ,\xi_n) \in \mathbb{C}^n$, it holds that
\begin{equation}
\label{concavity-4-36}
	\sum^n_{i = 1} \frac{S_{m - 1;i} (\bm{\lambda})}{\lambda_i} \xi_i \bar\xi_i  + \sum_{i,j} S_{m-2;ij} (\bm{\lambda}) \xi_i \bar\xi_j
	\geq \sum_{i,j} \frac{S_{m - 1;i} (\bm{\lambda} ) S_{m-1;j} (\bm{\lambda})}{S_m (\bm{\lambda})} \xi_i \bar\xi_j .
\end{equation}

\end{proposition}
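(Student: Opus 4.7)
The plan is to derive the strong concavity inequality from a single application of Cauchy--Schwarz, after regrouping the left-hand side of \eqref{concavity-4-36} according to the $m$-element subsets of $\{1,\ldots,n\}$. For each such subset $J$, write $\lambda_J := \prod_{k\in J}\lambda_k$ and introduce
\begin{equation*}
  z_J \;:=\; \sum_{i\in J} \lambda_{J\setminus\{i\}}\,\xi_i \;=\; \sum_{i\in J}\frac{\lambda_J}{\lambda_i}\xi_i.
\end{equation*}

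The argument then rests on two bookkeeping identities. The first,
$\sum_{|J|=m} z_J = \sum_i S_{m-1;i}(\bm{\lambda})\,\xi_i$,
follows by swapping the order of summation: the coefficient of $\xi_i$ on the left is $\sum_{J\ni i,\,|J|=m}\lambda_{J\setminus\{i\}}$, which is exactly $S_{m-1;i}(\bm{\lambda})$. The second identity,
\begin{equation*}
  \sum_{|J|=m}\frac{|z_J|^2}{\lambda_J} \;=\; \sum_i \frac{S_{m-1;i}(\bm{\lambda})}{\lambda_i}|\xi_i|^2 + \sum_{i\neq j} S_{m-2;ij}(\bm{\lambda})\,\xi_i\bar\xi_j,
\end{equation*}
is obtained by expanding $|z_J|^2/\lambda_J = \sum_{i,j\in J}\tfrac{\lambda_J}{\lambda_i\lambda_j}\xi_i\bar\xi_j$, separating the diagonal and off-diagonal contributions, and using the counts $\sum_{J\ni i,\,|J|=m}\lambda_{J\setminus\{i\}}/\lambda_i = S_{m-1;i}(\bm{\lambda})/\lambda_i$ and, for $i\neq j$, $\sum_{J\ni i,j,\,|J|=m}\lambda_{J\setminus\{i,j\}} = S_{m-2;ij}(\bm{\lambda})$ (with the derivative convention $S_{m-2;ii} = 0$, so the diagonal $i=j$ terms in the $\sum_{i,j}$ on the left-hand side of \eqref{concavity-4-36} contribute nothing).

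With these identities in hand, the Cauchy--Schwarz inequality applied to the sequences $\{\sqrt{\lambda_J}\}_{|J|=m}$ and $\{z_J/\sqrt{\lambda_J}\}_{|J|=m}$ yields
\begin{equation*}
  \Bigl|\sum_{|J|=m} z_J\Bigr|^2 \;\leq\; \Bigl(\sum_{|J|=m}\lambda_J\Bigr)\Bigl(\sum_{|J|=m}\frac{|z_J|^2}{\lambda_J}\Bigr) \;=\; S_m(\bm{\lambda})\cdot(\text{LHS of }\eqref{concavity-4-36}),
\end{equation*}
while the squared quantity on the far left is $|\sum_i S_{m-1;i}\xi_i|^2 = S_m(\bm{\lambda})\cdot(\text{RHS of }\eqref{concavity-4-36})$. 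Dividing through by $S_m(\bm{\lambda}) > 0$ gives the desired inequality. The only real work is the second identity; it is pure double counting, and no genuine analytic obstacle is anticipated.
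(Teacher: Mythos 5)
Your proof is correct. The two combinatorial identities check out: expanding $|z_J|^2/\lambda_J = \sum_{i,j\in J}\lambda_J/(\lambda_i\lambda_j)\,\xi_i\bar\xi_j$ and summing over all $m$-subsets $J$, the coefficient of $|\xi_i|^2$ is $\sum_{J\ni i}\lambda_{J\setminus\{i\}}/\lambda_i = S_{m-1;i}/\lambda_i$ and, for $i\neq j$, the coefficient of $\xi_i\bar\xi_j$ is $\sum_{J\ni i,j}\lambda_{J\setminus\{i,j\}} = S_{m-2;ij}$, while $\sum_{|J|=m}z_J=\sum_i S_{m-1;i}\xi_i$ and $\sum_{|J|=m}\lambda_J = S_m$. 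Cauchy--Schwarz with weights $\sqrt{\lambda_J}$ (all positive on $\Gamma_n$) then delivers the inequality after dividing by $S_m>0$. The implicit convention $S_{m-2;ii}=0$ you invoke is the standard one (it is forced if one reads $S_{m-2;ij}$ as $\partial^2 S_m/\partial\lambda_i\partial\lambda_j$, which vanishes on the diagonal by multilinearity), so the diagonal terms in the double sum on the left of \eqref{concavity-4-36} indeed contribute nothing.

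The paper does not give its own proof of this proposition; it simply cites \cite{GuanLiZhang2009} and \cite{FangLaiMa}. Your argument is a clean, self-contained derivation via a single Cauchy--Schwarz on the set of $m$-subsets, and is in the spirit of the classical treatment; no gap or analytic subtlety is present.
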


By strong concavity~\eqref{concavity-4-36}, we obtain
\begin{equation}
\label{inequality-4-37}
\begin{aligned}
	\bar\partial_l \partial_l S_m (X^{-1})
	&\geq - \sum_i S_{m - 1;i} (X^{-1}) (X^{i\bar i})^2 X_{i\bar il\bar l} \\
	&\qquad + \sum_{i,j} S_{m - 1;i} (X^{-1}) (X^{i\bar i})^2 X^{j\bar j} X_{i\bar jl} X_{j\bar i\bar l} \\
	&\qquad +  \sum_{i,j} \frac{S_{m - 1;i} (X^{-1}) S_{m - 1;j} (X^{-1})}{S_m (X^{-1})} (X^{i\bar i})^2 (X^{j\bar j})^2 X_{i\bar i l} X_{j\bar j\bar l} .
\end{aligned}
\end{equation}

We introduce a quantity $w$ for $C^2$ estimate
where 
\begin{equation*}
 	w := \ln tr_\omega (\chi + t\chi + \tilde \chi + i\partial\bar\partial \varphi_t) .
\end{equation*}
\begin{lemma}
\label{lemma-4-3}

Let $\varphi_t \in C^2(M)$ be an admissible solution to   Equation~\eqref{approximation-equation-1-5} for $t > 1$. There are positive constants $N$ and $\theta$ such that when $w > N$ at a point $z \in M$,
\begin{equation*}
\begin{aligned}
	&\quad- \sum_i \left(\frac{c}{C^m_n}S_{n - m - 1;i} (X^{-1}) (X^{i\bar i})^2 + b_t f S_{n - 1;i} (X^{-1}) (X^{i\bar i})^2\right) \varphi_{t, i\bar i} \\
	&> 
	\theta \left( \sum_i \left(\frac{c}{C^m_n}S_{n - m - 1;i} (X^{-1}) (X^{i\bar i})^2 + b_t f S_{n - 1;i} (X^{-1}) (X^{i\bar i})^2\right)+ 1\right) ,
\end{aligned}
\end{equation*}
under coordinates around $z$ such that $\omega_{i\bar  j} = \delta_{ij}$ and $X_{i\bar j}$ is diagonal at $z$.
The constants $N$ and $\theta$ may depend on parameter $t$ and geometric data.

\end{lemma}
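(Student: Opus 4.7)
The plan is to unpack $-\sum_i T_i \varphi_{t,i\bar i}$ via the admissibility identity $\varphi_{t,i\bar i} = X_{i\bar i} - (1+t)\chi_{i\bar i} - \tilde\chi_{i\bar i}$, use the equation to control $\sum_i T_i X_{i\bar i}$, and extract a positive gap from the strict-subsolution upgrade of the boundary cone condition afforded by the factor $(1+t)$.

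In the chosen frame at $z$, writing $T_i := \tfrac{c}{C^m_n} S_{n-m-1;i}(X^{-1})(X^{i\bar i})^2 + b_t f S_{n-1;i}(X^{-1})(X^{i\bar i})^2$, one has
\begin{align*}
-\sum_i T_i \varphi_{t,i\bar i} = (1+t)\sum_i T_i\chi_{i\bar i} + \sum_i T_i \tilde\chi_{i\bar i} - \sum_i T_i X_{i\bar i}.
\end{align*}
Applying Euler's identity to $F(X^{-1}) := \tfrac{c}{C^m_n}S_{n-m}(X^{-1}) + b_t f S_n(X^{-1}) = 1$, i.e.\ to the form \eqref{equation-4-3} of the equation, gives the universal bound $\sum_i T_i X_{i\bar i} = (n-m) + m b_t f S_n(X^{-1}) \in [n-m, n]$, while the semipositivity of $\tilde\chi$ yields $\sum_i T_i \tilde\chi_{i\bar i}\geq 0$. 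So the task reduces to producing the inequality $(1+t)\sum_i T_i\chi_{i\bar i} - \sum_i T_i X_{i\bar i} > \theta(\sum_i T_i + 1)$ when $w > N$.

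The boundary condition \eqref{condition-1} rescaled to $(1+t)\chi$ reads $n((1+t)\chi)^{n-1} \geq (1+t)^{n-m}\, mc((1+t)\chi)^{m-1}\wedge\omega^{n-m}$, so $(1+t)\chi$ is a quantitative strict subsolution with gap $(1+t)^{n-m}-1 > 0$ depending only on $t$. Following the standard strict-subsolution comparison as developed in \cite{Szekelyhidi,Sun2014e}, this gap transfers to the $T_i$-weighted inequality whenever $X$ is far from $(1+t)\chi$, which is exactly what the hypothesis $w > N$ arranges. The main obstacle lies in carrying out this transfer: since $F(X^{-1})=1$ forbids all eigenvalues of $X$ from being simultaneously large, one sets up a dichotomy on the smallest eigenvalue $\lambda_n(X)$. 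When $\lambda_n(X)$ stays bounded, the equation keeps some $\mu_i$ away from zero so $\sum_i T_i$ is bounded below and the strict gap supplies the constant $\theta$; when $\lambda_n(X)\to 0$, the coefficient $T_n$ grows through its $(X^{n\bar n})^2$ factor, forcing $\sum_i T_i$ and $\sum_i T_i\chi_{i\bar i}$ to be comparable, and the same gap again delivers $\theta$. Both $N$ and $\theta$ depend on $t$, on $\delta_0$ (with $\chi\geq\delta_0\omega$), and on fixed geometric data.
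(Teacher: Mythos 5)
Your overall architecture matches the paper's: decomposing $-\sum_i T_i\varphi_{t,i\bar i}$ via admissibility, using homogeneity of the operator together with equation form~\eqref{equation-4-3} to evaluate $\sum_i T_i X_{i\bar i}=(n-m)+mb_tfS_n(X^{-1})\in[n-m,n]$, dropping $\tilde\chi\geq0$, rescaling the boundary cone condition by $(1+t)$ to obtain a strict gap, and dichotomizing on whether the eigenvalues of $X$ escape to zero. That is precisely the paper's route, down to the role the $w>N$ hypothesis plays. However, there is a real gap at the pivotal step. In your Case A (smallest eigenvalue bounded away from $0$), you defer ``the strict gap supplies the constant $\theta$'' to standard $\mathcal C$-subsolution machinery, and this deferral is exactly where the lemma lives. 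The deferral is not innocuous: $\tilde\chi$ is only semipositive, so the full reference form $\chi+t\chi+\tilde\chi$ does not satisfy a quoted strict cone condition, and the transfer must be done pointwise and carefully. The paper constructs a concrete comparison matrix $\tilde X=\chi+\tfrac{t}{2}\chi+\tilde\chi+\sqrt{-1}\,X_{1\bar1}\,dz^1\wedge d\bar z^1$, uses concavity of the operator to dominate the linearized expression by $1-\bigl(\tfrac{c}{C^m_n}S_{n-m}(\tilde X^{-1})+b_tfS_n(\tilde X^{-1})\bigr)$, and then derives $\tfrac{c}{C^m_n}S_{n-m}(\tilde X^{-1})+b_tfS_n(\tilde X^{-1})<(1+\tfrac{t}{4})^{-(n-m)}$ directly from the boundary cone condition once $X_{1\bar1}$ is large; this computation is what produces both $\theta$ and the threshold $N$. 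Until something like it is carried out, Case A is asserted rather than proved.

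A secondary inaccuracy: in your Case B ($\lambda_n(X)\to0$, equivalently $S_1(X^{-1})$ large), the cone-condition gap is not needed at all. There $\sum_i T_i\geq\tfrac{n-m}{n}S_1(X^{-1})$ is large, $\sum_i T_i X_{i\bar i}\leq n$ is bounded independently of $X$, and $\chi\geq\delta_0\omega$ alone yields $(1+t)\sum_i T_i\chi_{i\bar i}-\sum_i T_iX_{i\bar i}\geq\theta\bigl(\sum_i T_i+1\bigr)$ once $S_1(X^{-1})\geq 2n/\delta_0$. Your phrasing about ``$T_n$ growing'' and $\sum T_i$ and $\sum T_i\chi_{i\bar i}$ being ``comparable'' is not the mechanism; the mechanism is simply that $\sum T_i$ is large while $\sum T_iX_{i\bar i}$ stays bounded.
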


\begin{proof}

Without loss of generality, we may assume that $X_{1\bar 1} \geq \cdots \geq X_{n\bar n} > 0$.

By direct computation,
\begin{equation*}
\begin{aligned}
	&\quad  \sum_i \left(\frac{c}{C^m_n} S_{n - m - 1;i} (X^{-1}) (X^{i\bar i})^2 + b_t f S_{n - 1;i} (X^{-1}) (X^{i\bar i})^2 \right)    \\
	&=    \frac{c}{C^m_n} \left(S_{n - m} (X^{-1}) S_1 (X^{-1}) - (n - m + 1) S_{n - m + 1} (X^{-1}) \right) + b_t f S_n (X^{-1}) S_1 (X^{-1}) \\
	&\leq  \left( \frac{c}{C^m_n}  S_{n - m} (X^{-1})    + b_t f S_n (X^{-1}) \right) S_1 (X^{-1}) \\
	&=
	 S_1 (X^{-1}) .
\end{aligned}
\end{equation*}
Further, by Newton-Maclaurin inequality, 
\begin{equation*}
\begin{aligned}
	&\quad  \sum_i \left(\frac{c}{C^m_n} S_{n - m - 1;i} (X^{-1}) (X^{i\bar i})^2 + b_t f S_{n - 1;i} (X^{-1}) (X^{i\bar i})^2 \right)    \\
	&\geq \frac{c}{C^m_n} \frac{n - m}{n} S_{n - m} (X^{-1}) S_1 (X^{-1})  + b_t f S_n (X^{-1}) S_1 (X^{-1})  \\
	&\geq \frac{n - m}{n} S_1 (X^{-1})
	.
\end{aligned}
\end{equation*}
Then, we obtain
\begin{equation*}
\begin{aligned}
	&\quad - \sum_i \left(\frac{c}{C^m_n} S_{n - m - 1;i} (X^{-1}) (X^{i\bar i})^2 + b_t f S_{n - 1;i} (X^{-1}) (X^{i\bar i})^2 \right)  \varphi_{t,i\bar i}   \\
	&\geq  (1 + t) \delta_0 \sum_i \left(\frac{c}{C^m_n} S_{n - m - 1;i} (X^{-1}) (X^{i\bar i})^2 + b_t f S_{n - 1;i} (X^{-1}) (X^{i\bar i})^2 \right)   \\
		&\quad - \sum_i \left(\frac{c}{C^m_n} S_{n - m - 1;i} (X^{-1}) X^{i\bar i} + b_t f S_{n - 1;i} (X^{-1}) X^{i\bar i}  \right) \\
	&\geq (1 + t) \delta_0  \left(\frac{c}{C^m_n} \frac{n - m}{n} S_{n - m} (X^{-1}) S_1 (X^{-1})  + b_t f S_n (X^{-1}) S_1 (X^{-1}) \right)   \\
				&\quad -  \left(\frac{c (n - m)}{C^m_n} S_{n - m } (X^{-1}) +  n b_t f S_{n } (X^{-1})  \right) \\
	&\geq \frac{(1 + t) \delta_0}{n} \left(\frac{c (n - m)}{C^m_n} S_{n - m} (X^{-1})    + n b_t f S_n (X^{-1}) \right)  S_1 (X^{-1})  \\
				&\quad -   \left(\frac{c (n - m)}{C^m_n} S_{n - m } (X^{-1}) +  n b_t f S_{n } (X^{-1})  \right) 
				.
\end{aligned}
\end{equation*}
If $S_1(X^{-1}) \geq \frac{2 n }{ \delta_0}$, then
\begin{equation*}
\begin{aligned}
	&\quad - \sum_i \left(\frac{c}{C^m_n} S_{n - m - 1;i} (X^{-1}) (X^{i\bar i})^2 + b_t f S_{n - 1;i} (X^{-1}) (X^{i\bar i})^2 \right)  \varphi_{t,i\bar i}   \\
	&\geq \frac{(1 + t) \delta_0}{2 n} \left(\frac{c (n - m)}{C^m_n} S_{n - m} (X^{-1})    + n b_t f S_n (X^{-1}) \right)  S_1 (X^{-1})  \\
	&\geq \frac{(1 + t) \delta_0 (n - m)}{ 2n} S_1 (X^{-1}) \\
	&\geq
	 \frac{(1 + t) \delta_0 (n - m)}{ 2n}
	 \sum_i \left(\frac{c}{C^m_n} S_{n - m - 1;i} (X^{-1}) (X^{i\bar i})^2 + b_t f S_{n - 1;i} (X^{-1}) (X^{i\bar i})^2 \right)  
				.
\end{aligned}
\end{equation*}
Otherwise,
\begin{equation}
\label{case-4-42}
	X_{1\bar 1} \geq \cdots \geq X_{n \bar n} > \frac{\delta_0}{2 n} .
\end{equation}
In the case of \eqref{case-4-42}, we rewrite
\begin{equation}
\label{inequality-4-43}
\begin{aligned}
	&\quad - \sum_i \left(\frac{c}{C^m_n} S_{n - m - 1;i} (X^{-1}) (X^{i\bar i})^2 + b_t f S_{n - 1;i} (X^{-1}) (X^{i\bar i})^2 \right)  \varphi_{t,i\bar i}   \\
	&\geq  
	\frac{t}{2} \sum_i \left(\frac{c}{C^m_n} S_{n - m - 1;i} (X^{-1}) (X^{i\bar i})^2 + b_t f S_{n - 1;i} (X^{-1}) (X^{i\bar i})^2 \right)  \chi_{i\bar i} \\
	&\quad - \left(\frac{c}{C^m_n} S_{n - m - 1;1} (X^{-1}) (X^{1\bar 1})^2 + b_t f S_{n - 1;1} (X^{-1}) (X^{1\bar 1})^2 \right)  X_{1\bar 1} \\
	&\quad + \sum_i \left(\frac{c}{C^m_n} S_{n - m - 1;i} (X^{-1}) (X^{i\bar i})^2 + b_t f S_{n - 1;i} (X^{-1}) (X^{i\bar i})^2 \right) \left( \chi_{i\bar i} + \frac{t}{2} \chi_{i\bar i} + \tilde \chi_{i\bar i} \right) \\
	&\quad + \left(\frac{c}{C^m_n} S_{n - m - 1;1} (X^{-1}) (X^{1\bar 1})^2 + b_t f S_{n - 1;1} (X^{-1}) (X^{1\bar 1})^2 \right)  X_{1\bar 1} \\
	&\quad - \sum_i \left(\frac{c}{C^m_n} S_{n - m - 1;i} (X^{-1}) (X^{i\bar i})^2 + b_t f S_{n - 1;i} (X^{-1}) (X^{i\bar i})^2 \right)  X_{i\bar i} 
	.	
\end{aligned}
\end{equation}
We denote
\begin{equation*}
\tilde X := \chi  + \frac{t}{2} \chi + \tilde \chi + \sqrt{-1} X_{1\bar 1} d z^1 \wedge d\bar z^1 .
\end{equation*}
By concavity, it is from \eqref{inequality-4-43} and \eqref{case-4-42} that
\begin{equation}
\label{cone-4-43}
\begin{aligned}
	&\quad - \sum_i \left(\frac{c}{C^m_n} S_{n - m - 1;i} (X^{-1}) (X^{i\bar i})^2 + b_t f S_{n - 1;i} (X^{-1}) (X^{i\bar i})^2 \right)  \varphi_{t,i\bar i}   \\
	&\geq \frac{t}{2} \sum_i \left(\frac{c}{C^m_n} S_{n - m - 1;i} (X^{-1}) (X^{i\bar i})^2 + b_t f S_{n - 1;i} (X^{-1}) (X^{i\bar i})^2 \right)  \chi_{i\bar i} \\
		&\quad - \left(\frac{c}{C^m_n} S_{n - m - 1;1} (X^{-1})   + b_t f S_{n - 1;1} (X^{-1})  \right) X^{1\bar 1}\\
		&\quad - \left(\frac{c}{C^m_n} S_{n - m} (\tilde X^{-1}) + b_t f S_n (\tilde X^{-1})\right) + 1 \\
	&> 
	\frac{t}{2} \sum_i \left(\frac{c}{C^m_n} S_{n - m - 1;i} (X^{-1}) (X^{i\bar i})^2 + b_t f S_{n - 1;i} (X^{-1}) (X^{i\bar i})^2 \right)  \chi_{i\bar i} \\
		&\quad - \left(\frac{c}{C^m_n} C^{n - m - 1}_{n - 1}  \left(\frac{2n}{\delta_0}\right)^{n - m - 1} + b_t f  \left(\frac{2n}{\delta_0}\right)^{n - 1}  \right) X^{1\bar 1}\\
		&\quad - \left(\frac{c}{C^m_n} S_{n - m} (\tilde X^{-1}) + b_t f S_n (\tilde X^{-1})\right) + 1 
		.
\end{aligned}
\end{equation}
Similar to \eqref{inequality-4-19},
\begin{equation}
\label{inequality-4-47-1}
\begin{aligned}
	&\quad \tilde X^n - \left(1 + \frac{t}{4}\right)^{n - m} \left(c \tilde X^m \wedge \omega^{n - m} + b_t f \omega^n\right) \\
	&\geq \left(1 + \frac{t}{4}\right)^n \left(\frac{\hat\chi^n}{(1 + \frac{t}{4})^n} + \chi^n - c \chi^m \wedge \omega^{n - m} - \frac{b_t}{\left(1 + \frac{t}{4}\right)^m} f \omega^n\right) \\
	&> 
	n \sqrt{-1} X_{1\bar 1} d z^1 \wedge d\bar z^1 \wedge \left(\frac{t}{4} \chi + \tilde \chi\right)^{n - 1} \\
	&\qquad -  \left(1 + \frac{t}{4}\right)^n  c \chi^m \wedge \omega^{n - m}  - \left(1 + \frac{t}{4}\right)^{n - m} b_t f \omega^n 
	,
\end{aligned}
\end{equation}
where
\begin{equation*}
	\hat\chi := \frac{t}{4} \chi + \tilde \chi + \sqrt{-1} X_{1\bar 1} dz^1 \wedge d\bar z^1 .
\end{equation*}
If
\begin{equation*}
	X_{1\bar 1} \geq \frac{4^{n-1}}{t^{n - 1} \delta^{n - 1}_0 } \left(\left(1 + \frac{t}{4}\right)^n \frac{c}{C^m_n} S_m (\chi) + \left(1 + \frac{t}{4}\right)^{n - m} b_t f\right) ,
\end{equation*}
then Inequality~\eqref{inequality-4-47-1} implies that
\begin{equation}
\label{cone-4-47}
\tilde X^n - \left(1 + \frac{t}{4}\right)^{n - m} \left(c \tilde X^m \wedge \omega^{n - m} + b_t f \omega^n\right) > 0 .
\end{equation}
Inequality~\eqref{cone-4-47} tells us that
\begin{equation}
\label{cone-4-48}
	\frac{c}{C^m_n} S_{n - m} (\tilde X^{-1})+ b_t f S_n (\tilde X^{-1})< \frac{1}{\left(1 + \frac{t}{4}\right)^{n- m}} .
\end{equation}
Substituting \eqref{cone-4-48} into \eqref{cone-4-43},
\begin{equation*}
\begin{aligned}
	&\quad - \sum_i \left(\frac{c}{C^m_n} S_{n - m - 1;i} (X^{-1}) (X^{i\bar i})^2 + b_t f S_{n - 1;i} (X^{-1}) (X^{i\bar i})^2 \right)  \varphi_{t,i\bar i}   \\
	&> 
		\frac{t}{2} \sum_i \left(\frac{c}{C^m_n} S_{n - m - 1;i} (X^{-1}) (X^{i\bar i})^2 + b_t f S_{n - 1;i} (X^{-1}) (X^{i\bar i})^2 \right)  \chi_{i\bar i} \\
			&\quad - \left(\frac{(n - m)c}{n}  \left(\frac{2n}{\delta_0}\right)^{n - m - 1} + b_t f  \left(\frac{2n}{\delta_0}\right)^{n - 1}  \right) \frac{1}{X_{1\bar 1}} + \frac{(n - m) t}{4 \left(1 + \frac{t}{4}\right)^{n - m}}  
	.
\end{aligned}
\end{equation*}
Therefore, 
\begin{equation*}
\begin{aligned}
	&\quad - \sum_i \left(\frac{c}{C^m_n} S_{n - m - 1;i} (X^{-1}) (X^{i\bar i})^2 + b_t f S_{n - 1;i} (X^{-1}) (X^{i\bar i})^2 \right)  \varphi_{t,i\bar i}   \\
	&> 
		\frac{\delta_0 t}{2} \sum_i \left(\frac{c}{C^m_n} S_{n - m - 1;i} (X^{-1}) (X^{i\bar i})^2 + b_t f S_{n - 1;i} (X^{-1}) (X^{i\bar i})^2 \right)   
		 + \frac{(n - m) t}{8 \left(1 + \frac{t}{4}\right)^{n - m}}  
	,
\end{aligned}
\end{equation*}
when
\begin{equation*}
\begin{aligned}
	X_{1\bar 1} &\geq \frac{4^{n-1}}{t^{n - 1}\delta^{n - 1}_0}  \left(\left(1 + \frac{t}{4}\right)^n \frac{c}{C^m_n} S_m (\chi) + \left(1 + \frac{t}{4}\right)^{n - m} b_t f\right) \\
	&\qquad + \frac{8}{(n - m) t} \left(1 + \frac{t}{4}\right)^{n - m} \left(\frac{(n - m)c}{n}  \left(\frac{2n}{\delta_0}\right)^{n - m - 1} + b_t f  \left(\frac{2n}{\delta_0}\right)^{n - 1}  \right) .
\end{aligned}
\end{equation*}

\end{proof}

Now we shall prove second order estimate.
\begin{theorem}[Second order estimate]
\label{theorem-C2-estimate}

Let $\varphi_t \in C^4(M)$ be an admissible solution to Equation~\eqref{approximation-equation-1-5} for $t > 0$. Then there are positive constants $C$ and $A$ such that
\begin{equation*}
\sup_M w \leq C e^{A \left(\varphi_t - \inf_M \varphi_t\right)} .
\end{equation*}
The constants $C$ and $A$ may depend on parameter $t$ and given geometric data.

\end{theorem}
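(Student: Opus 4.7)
The plan is to apply the maximum principle to the test function
$$\Phi := w - A(\varphi_t - \inf_M\varphi_t),$$
with a large parameter $A > 0$ chosen depending on $t$. Let $z_0 \in M$ be a point at which $\Phi$ attains its maximum. Choose a chart near $z_0$ with $\omega_{i\bar j} = \delta_{ij}$ and $X_{i\bar j}$ diagonal at $z_0$, and set
$$\mathcal{F}^{i\bar i} := \tfrac{c}{C^m_n}S_{n-m-1;i}(X^{-1})(X^{i\bar i})^2 + b_t f\,S_{n-1;i}(X^{-1})(X^{i\bar i})^2,$$
so that $-\sum_i \mathcal{F}^{i\bar i}\partial_i\partial_{\bar i}$ is the linearization of Equation~\eqref{equation-4-3}. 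If $w(z_0) \leq N$ (the threshold of Lemma~\ref{lemma-4-3}), then $\Phi(z_0) \leq N$ and the claimed exponential bound follows immediately; so I may assume $w(z_0) > N$, which makes Lemma~\ref{lemma-4-3} applicable for the remainder of the argument.

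The criticality conditions at $z_0$ read $w_i = A\varphi_{t,i}$ and $w_{i\bar i} \leq A\varphi_{t,i\bar i}$. Multiplying the second against $\mathcal{F}^{i\bar i}$, summing in $i$, and invoking Lemma~\ref{lemma-4-3} yields
$$\sum_i\mathcal{F}^{i\bar i}w_{i\bar i} \;\leq\; A\sum_i\mathcal{F}^{i\bar i}\varphi_{t,i\bar i} \;<\; -A\theta\!\left(\sum_i\mathcal{F}^{i\bar i} + 1\right),$$
which is the upper bound on $\sum_i\mathcal{F}^{i\bar i}w_{i\bar i}$. To produce a matching lower bound I will differentiate \eqref{equation-4-3} twice in $\partial_l\partial_{\bar l}$ and apply the strong concavity \eqref{inequality-4-37} to each Hessian-quotient term. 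Summing over $l$, commuting $X_{i\bar i l\bar l} = X_{l\bar l i\bar i} + O(1)\cdot X$ via the curvature of $\omega$, and using $\sum_i\mathcal{F}^{i\bar i}X_{i\bar i}\leq n$ (which follows directly from \eqref{equation-4-3}), this gives
$$\sum_i\mathcal{F}^{i\bar i}(\mathrm{tr}_\omega X)_{i\bar i} \;\geq\; \Theta - C\,(\mathrm{tr}_\omega X)\sum_i\mathcal{F}^{i\bar i} - C,$$
where $\Theta \geq 0$ collects the nonnegative third-order terms from \eqref{inequality-4-37}. Since $w = \ln \mathrm{tr}_\omega X$ and $w_{i\bar i} = (\mathrm{tr}_\omega X)_{i\bar i}/\mathrm{tr}_\omega X - |(\mathrm{tr}_\omega X)_i|^2/(\mathrm{tr}_\omega X)^2$, dividing by $\mathrm{tr}_\omega X$ and discarding $\Theta$ yields
$$\sum_i\mathcal{F}^{i\bar i}w_{i\bar i} \;\geq\; -C\sum_i\mathcal{F}^{i\bar i} - \frac{C}{\mathrm{tr}_\omega X} - \sum_i\mathcal{F}^{i\bar i}\frac{|(\mathrm{tr}_\omega X)_i|^2}{(\mathrm{tr}_\omega X)^2}.$$
The first-order condition $w_i = A\varphi_{t,i}$ converts the last term to $A^2\sum_i\mathcal{F}^{i\bar i}|\varphi_{t,i}|^2 \leq A^2 C_1(t)\sum_i\mathcal{F}^{i\bar i}$, where $C_1(t)$ is the supremum of $|\nabla\varphi_t|^2$ supplied by the gradient estimate together with the $L^\infty$ bound of Theorem~\ref{theorem-3-2}.

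Comparing the two bounds on $\sum_i\mathcal{F}^{i\bar i}w_{i\bar i}$ produces
$$\bigl(A\theta - C - A^2 C_1(t)\bigr)\sum_i\mathcal{F}^{i\bar i} + A\theta \;\leq\; \frac{C}{\mathrm{tr}_\omega X}.$$
Choosing $A$ to maximize $A\theta - A^2 C_1(t)$—roughly $A\sim \theta/C_1(t)$—either gives an immediate contradiction (forcing $w(z_0) \leq N$) or produces a bound $\mathrm{tr}_\omega X(z_0) \leq C(t)$; in either case $\Phi(z_0) \leq C(t)$, so the required estimate for $w$ holds pointwise on $M$ after comparing $\Phi(z)\leq \Phi(z_0)$. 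The main obstacle will be the second-order differentiation in the middle step: carefully bookkeeping the strong concavity terms, absorbing the $b_t f$-derivative contributions into the positive third-order quadratic terms via Cauchy--Schwarz, and executing the commutation $X_{i\bar il\bar l}\leftrightarrow X_{l\bar li\bar i}$ with uniform control on the curvature error. A secondary difficulty is the $t$-dependent balance of constants: because $\theta$ in Lemma~\ref{lemma-4-3} degenerates as $t\to 0$, the inequality above fails in the limit, giving an estimate that is necessarily non-uniform in $t$, consistent with the Song--Weinkove counterexample mentioned at the start of this section.
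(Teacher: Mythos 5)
Your strategy matches the paper's in outline—same test function $\ln\mathrm{tr}_\omega X - A\varphi_t$ up to normalization, same linearized operator $\mathcal{F}^{i\bar i}$, same reliance on Lemma~\ref{lemma-4-3} and strong concavity—but there is a genuine gap in the middle where you write ``dividing by $\mathrm{tr}_\omega X$ and discarding $\Theta$.'' Throwing away the positive third-order quantity $\Theta = \sum_{i,j,l}\mathcal{F}^{i\bar i}X^{j\bar j}X_{i\bar jl}X_{j\bar i\bar l}$ and then estimating the logarithmic concavity term $\sum_i\mathcal{F}^{i\bar i}\lvert w_i\rvert^2/w^2$ by inserting the first-order condition $w_i = A\varphi_{t,i}$ and the gradient bound produces the factor $A^2 C_1(t)\sum_i\mathcal{F}^{i\bar i}$, and the resulting quadratic $A\theta - C - A^2 C_1(t)$ in $A$ need not have any nonnegative value: its maximum is $\theta^2/(4C_1(t)) - C$, and there is no a priori relation forcing $\theta^2 > 4CC_1(t)$ (indeed $C_1(t)$ from the gradient estimate can be exponentially large in the oscillation of $\varphi_t$). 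So ``choosing $A$ to maximize $A\theta - A^2 C_1(t)$'' does not in general yield a contradiction or a bound, and the argument does not close.

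The fix, which is exactly what the paper does, is to keep $\Theta$ and use it to cancel the bad term algebraically rather than absorbing it via the gradient estimate. The Phong--Sturm style identity in the paper's Inequality~\eqref{inequality-4-47}--\eqref{inequality-4-48}, namely
\begin{equation*}
\sum_{i,j,l}\mathcal{F}^{i\bar i}X^{j\bar j}X_{i\bar jl}X_{j\bar i\bar l} \;\geq\; \sum_{i}\mathcal{F}^{i\bar i}\frac{w_i w_{\bar i}}{w},
\end{equation*}
shows that the retained positive third-order terms dominate $\sum_i\mathcal{F}^{i\bar i}w_iw_{\bar i}/w$ directly, so when that quantity is subtracted in the second-order maximum-principle condition it cancels identically and no $A^2$ or gradient-estimate input is needed. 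Consequently the only remaining balancing in $A$ is linear (the $A\theta$ against a fixed $C$), which is always achievable. You should therefore not discard $\Theta$; route the $b_t f$-derivative contributions through Cauchy--Schwarz (as you planned), but spend $\Theta$ on the $\lvert w_i\rvert^2/w$ term instead.
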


\begin{proof}

We consider the function
\begin{equation}
\label{test-function-4-41}
		\ln w - A \varphi_t  ,
\end{equation}
where $A >> 1$ is to be determined later. There is a point $z_0$ where $\ln w - A \varphi_t$ reaches its maximal value. We pick a local chart around $z_0$ such that $\omega_{i\bar  j} = \delta_{ij}$ and $X_{i\bar j}$ is diagonal at $z_0$.

Differentiating equation form~\eqref{equation-4-3} at $z_0$, we obtain
\begin{equation}
\begin{aligned}
	0
	&= 
	- \frac{c}{C^m_n} \sum_i S_{n - m - 1;i} (X^{-1}) (X^{i\bar i})^2 X_{i\bar i l} + b_t f_l S_n (X^{-1}) - b_t S_n (X^{-1}) f \sum_i X^{i\bar i} X_{i\bar il} ,
\end{aligned}
\end{equation}
and
\begin{equation}
\label{derivative-4-53}
\begin{aligned}
	0
	&\geq
	\frac{c}{C^m_n} \sum_{i,j} S_{n - m - 1;i} (X^{-1}) (X^{i\bar i})^2 X^{j\bar j} X_{j\bar i\bar l} X_{i\bar jl} - \frac{c}{C^m_n} \sum_i S_{n - m - 1;i} (X^{-1}) (X^{i\bar i})^2 X_{i\bar il\bar l} \\
	&\qquad + b_t f \sum_{i,j} S_{n - 1;i} (X^{-1}) (X^{i\bar i})^2 X^{j\bar j} X_{j\bar i\bar l} X_{i\bar jl} - b_t f \sum_i S_{n  - 1;i} (X^{-1}) (X^{i\bar i})^2 X_{i\bar il\bar l} \\
	&\qquad + b_t f \frac{1}{S_n (X^{-1} )} \partial_l S_n (X^{-1}) \bar\partial _l S_n (X^{-1}) \\
	&\qquad + b_t f_{l\bar l} S_n (X^{-1}) + b_t f_l \bar\partial_l S_n (X^{-1})+ b_t f_{\bar l} \partial_l S_n (X^{-1}) \\
	&\geq 
 		\left(\frac{c}{C^m_n} \sum_{i,j} S_{n - m - 1;i} (X^{-1})  + b_t f \sum_{i,j} S_{n - 1;i} (X^{-1}) \right) (X^{i\bar i})^2 X^{j\bar j} X_{j\bar i\bar l} X_{i\bar jl} \\
 	&\qquad - \left(\frac{c}{C^m_n} \sum_{i,j} S_{n - m - 1;i} (X^{-1})  + b_t f \sum_{i,j} S_{n - 1;i} (X^{-1}) \right) (X^{i\bar i})^2 X_{i\bar il\bar l} \\
	&\qquad + b_t f_{l\bar l} S_n (X^{-1})   - b_t \frac{ f_l f_{\bar l} }{f} S_n (X^{-1}) 
	.
\end{aligned}
\end{equation}
The first inequality in \eqref{derivative-4-53} is derived via Inequality~\eqref{inequality-4-37}, while the second is by Cauchy-Schwarz inequality. 
In sum,
\begin{equation}
\label{inequality-4-40}
\begin{aligned}
	&\quad \sum_i  \left(\frac{c}{C^m_n} S_{n - m - 1;i} (X^{-1}) (X^{i\bar i})^2 + b_t f S_{n - 1;i} (X^{-1}) (X^{i\bar i})^2\right) X_{i\bar il\bar l} \\
	&\geq \sum_{i,j} \left(	\frac{c}{C^m_n}  S_{n - m - 1;i} (X^{-1}) (X^{i\bar i})^2    + b_t f  S_{n - 1;i} (X^{-1}) (X^{i\bar i})^2 \right) X^{j\bar j} X_{j\bar i\bar l} X_{i\bar jl}  \\
		&\qquad - b_t  S_n (X^{-1})  \frac{f_l f_{\bar l}}{f}  + b_t  S_n (X^{-1})  f_{l\bar l} .
\end{aligned}
\end{equation}
Moreover, we know that at $z_0$ 
\begin{equation}
	0 = \sum_l X_{l\bar li} - w A \varphi_{t,i}  ,
\end{equation}
and
\begin{equation}
\label{inequality-4-44}
\begin{aligned}
	0 &\geq 
	\sum_l X_{l\bar li\bar i} - \frac{w_i w_{\bar i}}{w} - w A \varphi_{t,i\bar i} \\
	&= \sum_l \left(X_{i\bar il\bar l} - R_{l\bar l i\bar i} X_{i\bar i} + R_{i\bar il\bar l} X_{l\bar l} + G_{i\bar il\bar l} \right) - \frac{w_i w_{\bar i}}{w} - w A \varphi_{t,i\bar i}   .
\end{aligned}
\end{equation}
where
\begin{equation}
\begin{aligned}
	G_{i\bar il\bar l} 
	&= 
	(\chi  + \tilde \chi)_{l\bar li\bar i} - (\chi   + \tilde \chi)_{i\bar il\bar l} + \sum_k R_{l\bar li\bar k} (\chi +  \tilde \chi)_{k\bar i} - \sum_k R_{i\bar il\bar k} (\chi  + \tilde \chi)_{k\bar l} \\
	&\qquad +
	 t\chi_{l\bar li\bar i} -  t\chi_{i\bar il\bar l} + t \sum_k R_{l\bar li\bar k} \chi_{k\bar i} - t \sum_k R_{i\bar il\bar k}  \chi_{k\bar l}  .
\end{aligned}
\end{equation}
Multiplying \eqref{inequality-4-44} by $ \frac{c}{C^m_n} S_{n - m - 1;i} (X^{-1}) (X^{i\bar i})^2 + b_t f S_{n - 1;i} (X^{-1}) (X^{i\bar i})^2 $,
\begin{equation}
\label{inequality-4-46}
\begin{aligned}
	0 &\geq 
	\sum_{i,l} \left(\frac{c}{C^m_n} S_{n - m - 1;i} (X^{-1}) (X^{i\bar i})^2 + b_t f S_{n - 1;i} (X^{-1}) (X^{i\bar i})^2 \right)  X_{i\bar il\bar l}   \\
	&\qquad - C (w + 1)	\sum_{i} \left(\frac{c}{C^m_n} S_{n - m - 1;i} (X^{-1}) (X^{i\bar i})^2 + b_t f S_{n - 1;i} (X^{-1}) (X^{i\bar i})^2 \right)  \\
	&\qquad - \sum_i \left(\frac{c}{C^m_n} S_{n - m - 1;i} (X^{-1}) (X^{i\bar i})^2 + b_t f S_{n - 1;i} (X^{-1}) (X^{i\bar i})^2 \right)  \frac{w_i w_{\bar i}}{w} \\
	&\qquad - w A \sum_i \left(\frac{c}{C^m_n} S_{n - m - 1;i} (X^{-1}) (X^{i\bar i})^2 + b_t f S_{n - 1;i} (X^{-1}) (X^{i\bar i})^2 \right)  \varphi_{t,i\bar i}   .
\end{aligned}
\end{equation}

By direct calculation,
\begin{equation}
\begin{aligned}
	\left| X_{i\bar j l} - X_{j\bar l} \frac{w_i}{w}\right|^2
	&= X_{i\bar jl} X_{j\bar i\bar l} - 2 \mathfrak{Re} \left\{ X_{i\bar jl} X_{l\bar j} \frac{w_{\bar i}}{w}\right\} + X_{j\bar l} X_{l\bar j} \frac{  w_i   w_{\bar i}}{w^2} ,
\end{aligned}
\end{equation}
and hence
\begin{equation}
\label{inequality-4-47}
\begin{aligned}
	\sum_l X_{i\bar jl} X_{j\bar i\bar l}  &\geq 2 \sum_l \mathfrak{Re} \left\{ X_{i\bar jl} X_{l\bar j} \frac{  w_{\bar i}}{w}\right\} - \sum_l X_{j\bar l} X_{l\bar j} \frac{  w_i  w_{\bar i}}{w^2} \\
	&= 2 X_{j\bar j} \mathfrak{Re} \left\{ X_{i\bar jj}   \frac{  w_{\bar i}}{w}\right\} - X^2_{j\bar j}  \frac{  w_i   w_{\bar i}}{w^2} \\
	&= 2 X_{j\bar j} \mathfrak{Re} \left\{ X_{j\bar ji}   \frac{  w_{\bar i}}{w}\right\} - X^2_{j\bar j}  \frac{  w_i   w_{\bar i}}{w^2}.
\end{aligned}
\end{equation}
Multiplying \eqref{inequality-4-47} by $\left(\frac{c}{C^m_n} S_{n - m - 1;i} (X^{-1}) (X^{i\bar i})^2 + b_t f S_{n - 1;i} (X^{-1}) (X^{i\bar i})^2\right) X^{j\bar j} $ and summing them up,
\begin{equation}
\label{inequality-4-48}
\begin{aligned}
	&\quad \sum_{i,j,l}  \left(\frac{c}{C^m_n} S_{n - m - 1;i} (X^{-1}) (X^{i\bar i})^2 + b_t f S_{n - 1;i} (X^{-1}) (X^{i\bar i})^2\right) X^{j\bar j} X_{i\bar jl} X_{j\bar i\bar l} \\
	&\geq \sum_{i,j}  \left(\frac{c}{C^m_n} S_{n - m - 1;i} (X^{-1}) (X^{i\bar i})^2 + b_t f S_{n - 1;i} (X^{-1}) (X^{i\bar i})^2\right) X^{j\bar j} \\
	&\qquad\qquad \cdot \left(2 X_{j\bar j} \mathfrak{Re} \left\{ X_{j\bar ji}   \frac{ w_{\bar i}}{w}\right\} - X^2_{j\bar j}  \frac{  w_i   w_{\bar i}}{w^2} \right) \\
	&= \sum_{i}  \left(\frac{c}{C^m_n} S_{n - m - 1;i} (X^{-1}) (X^{i\bar i})^2 + b_t f S_{n - 1;i} (X^{-1}) (X^{i\bar i})^2\right)   \frac{  w_i   w_{\bar i}}{w}   .
\end{aligned}
\end{equation}
Substituting \eqref{inequality-4-48} into \eqref{inequality-4-40},
\begin{equation}
\label{inequality-4-50}
\begin{aligned}
	&\qquad \sum_{i,l}  \left(\frac{c}{C^m_n} S_{n - m - 1;i} (X^{-1}) (X^{i\bar i})^2 + b_t f S_{n - 1;i} (X^{-1}) (X^{i\bar i})^2\right) X_{i\bar il\bar l} \\
	&\geq \sum_{i}  \left(\frac{c}{C^m_n} S_{n - m - 1;i} (X^{-1}) (X^{i\bar i})^2 + b_t f S_{n - 1;i} (X^{-1}) (X^{i\bar i})^2\right)    \frac{  w_i   w_{\bar i}}{w} \\
		&\qquad - b_t  S_n (X^{-1}) \sum_l \frac{f_l f_{\bar l}}{f}  + b_t  S_n (X^{-1}) \sum_l  f_{l\bar l} .
\end{aligned}
\end{equation}
Substituting \eqref{inequality-4-50} into \eqref{inequality-4-46},
\begin{equation}
\begin{aligned}
	0 
	&\geq  - b_t  S_n (X^{-1}) \sum_l \frac{f_l f_{\bar l}}{f}  + b_t  S_n (X^{-1}) \sum_l  f_{l\bar l} \\
	&\qquad - C (w + 1)	\sum_{i} \left(\frac{c}{C^m_n} S_{n - m - 1;i} (X^{-1}) (X^{i\bar i})^2 + b_t f S_{n - 1;i} (X^{-1}) (X^{i\bar i})^2 \right)  \\
	&\qquad - w A \sum_i \left(\frac{c}{C^m_n} S_{n - m - 1;i} (X^{-1}) (X^{i\bar i})^2 + b_t f S_{n - 1;i} (X^{-1}) (X^{i\bar i})^2 \right)  \varphi_{t,i\bar i}   
	.
\end{aligned}
\end{equation}
According to Lemma~\ref{lemma-4-3}, there are positive constants $N(t)$ and $\theta(t)$ such that 
\begin{equation}
\label{inequality-4-57}
\begin{aligned}
	0 
	&\geq  - b_t  f S_n (X^{-1}) \left|\sum_l \left(\frac{f_l f_{\bar l}}{f^2}  - \frac{f_{l\bar l}}{f} \right)\right| \\
	&\qquad - C (w + 1)	\sum_{i} \left(\frac{c}{C^m_n} S_{n - m - 1;i} (X^{-1}) (X^{i\bar i})^2 + b_t f S_{n - 1;i} (X^{-1}) (X^{i\bar i})^2 \right)  \\
	&\qquad + w A \theta (t) \left(\sum_i \left(\frac{c}{C^m_n} S_{n - m - 1;i} (X^{-1}) (X^{i\bar i})^2 + b_t f S_{n - 1;i} (X^{-1}) (X^{i\bar i})^2 \right)  + 1 \right) \\
	&\geq 
	- C- C (w + 1)	\sum_{i} \left(\frac{c}{C^m_n} S_{n - m - 1;i} (X^{-1}) (X^{i\bar i})^2 + b_t f S_{n - 1;i} (X^{-1}) (X^{i\bar i})^2 \right)  \\
		&\qquad + w A \theta (t) \left(\sum_i \left(\frac{c}{C^m_n} S_{n - m - 1;i} (X^{-1}) (X^{i\bar i})^2 + b_t f S_{n - 1;i} (X^{-1}) (X^{i\bar i})^2 \right)  + 1 \right) 
	.
\end{aligned}
\end{equation}
when $w > N(t)$. If we choose $A$ sufficiently large, Inequality~\eqref{inequality-4-57} is impossible.

Therefore, at any $z \in M$
\begin{equation*}
	\ln w -  A \varphi_t \leq \ln w (z_0) -  A \varphi_t (z_0) \leq \ln N(t) - A \inf_M \varphi_t  .
\end{equation*}

\end{proof}

\medskip
\subsection{Higher order estimates and continuity method}

Higher order estimates can be achieved by Evans-Krylov theory~\cite{Evans1982}\cite{Krylov1982}\cite{TosattiWangWeinkoveYang2015} and Schauder estimate. To solve Equation~\eqref{approximation-equation-1-5}, we can apply variant continuity methods introduced by the author~\cite{Sun2016} and Sz\'ekelyhidi~\cite{Szekelyhidi}.

\medskip

\section{Uniform Smoothness}
\label{smoothness}

The estimates in Section~\ref{solvability} may blow up as $t$ approaches $0$. In this section, we shall study uniform smoothness for approximation equation~\eqref{approximation-equation-1-5}. Since the $L^\infty$ estimate proven in Section~\ref{L-estimate} is $t$-independent and higher order estimates are local, it is sufficient to show that $C^2$ estimate is also $t$-independent in certain sense.

%

%
%
%
%
%

First of all, we need to modify Lemma~\ref{lemma-4-3}.
Thanks to the work of Boucksom~\cite{Boucksom2004}, we know that there is a function $\rho$ and a constant $\delta > 0$ such that 
$\rho$ is smooth in the ample locus $Amp (\tilde\chi)$ with analytic singularities,  
and $\tilde\chi + \sqrt{-1}\partial\bar\partial\rho \geq \delta \chi $.
\begin{lemma}
\label{lemma-5-2}

Let $\varphi_t \in C^2(M)$ be an admissible solution to   Equation~\eqref{approximation-equation-1-5} for $t > 0$. There are positive constants $N$ and $\theta$ such that when $w > N$ at a point $z \in Amp (\tilde \chi)$,
\begin{equation*}
\begin{aligned}
	&\quad \sum_i \left(\frac{c}{C^m_n}S_{n - m - 1;i} (X^{-1}) (X^{i\bar i})^2 + b_t f S_{n - 1;i} (X^{-1}) (X^{i\bar i})^2\right) \left(\rho_{i\bar i} - \varphi_{t, i\bar i} \right) \\
	&> 
	\theta \left( \sum_i \left(\frac{c}{C^m_n}S_{n - m - 1;i} (X^{-1}) (X^{i\bar i})^2 + b_t f S_{n - 1;i} (X^{-1}) (X^{i\bar i})^2\right)+ 1\right) ,
\end{aligned}
\end{equation*}
under coordinates around $z$ such that $\omega_{i\bar  j} = \delta_{ij}$ and $X_{i\bar j}$ is diagonal at $z$.
The constants $N$ and $\theta$ are independent of parameter $t$.

\end{lemma}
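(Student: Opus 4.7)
The plan is to mimic the proof of Lemma~\ref{lemma-4-3} almost verbatim, replacing the uniform positivity previously supplied by the factor $t\chi$ in $X = \chi + t\chi + \tilde\chi + \sqrt{-1}\partial\bar\partial\varphi_t$ with the positivity supplied by the Boucksom potential $\rho$, which on $\mathrm{Amp}(\tilde\chi)$ satisfies $\tilde\chi + \sqrt{-1}\partial\bar\partial\rho \geq \delta\chi$ for a fixed $\delta > 0$ independent of $t$. Because $\delta$ is $t$-independent, the resulting constants $N$ and $\theta$ will also be $t$-independent, as required.

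At $z \in \mathrm{Amp}(\tilde\chi)$, in diagonalizing coordinates, the basic pointwise identity is
\[
\rho_{i\bar i} - \varphi_{t,i\bar i} = (1+t)\chi_{i\bar i} + \bigl(\tilde\chi + \sqrt{-1}\partial\bar\partial\rho\bigr)_{i\bar i} - X_{i\bar i} \geq (1+\delta)\chi_{i\bar i} - X_{i\bar i}.
\]
Writing $P_i := \tfrac{c}{C^m_n}S_{n-m-1;i}(X^{-1})(X^{i\bar i})^2 + b_t f S_{n-1;i}(X^{-1})(X^{i\bar i})^2$, I would recall from the proof of Lemma~\ref{lemma-4-3} the bounds $\tfrac{n-m}{n}S_1(X^{-1}) \leq \sum_i P_i \leq S_1(X^{-1})$ together with $\sum_i P_i X_{i\bar i} = \tfrac{c(n-m)}{C^m_n}S_{n-m}(X^{-1}) + n b_t f S_n(X^{-1})$. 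Combining these with $\chi \geq \delta_0 \omega$,
\[
\sum_i P_i(\rho_{i\bar i} - \varphi_{t,i\bar i}) \geq (1+\delta)\delta_0 \sum_i P_i - \sum_i P_i X_{i\bar i}.
\]
If $S_1(X^{-1}) \geq 2n/((1+\delta)\delta_0)$, the same manipulation as in Lemma~\ref{lemma-4-3} gives $\sum_i P_i(\rho_{i\bar i}-\varphi_{t,i\bar i}) \geq \tfrac{(1+\delta)\delta_0(n-m)}{2n}\sum_i P_i$, which suffices. Otherwise every $X_{i\bar i} \geq (1+\delta)\delta_0/(2n)$, and we proceed by isolating $X_{1\bar 1}$ via concavity.

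In this second case the appropriate auxiliary form is
\[
\tilde X := \chi + \tilde\chi + \sqrt{-1}\partial\bar\partial\rho + \sqrt{-1}\,X_{1\bar 1}\,dz^1 \wedge d\bar z^1 = \bigl(1+\tfrac{\delta}{2}\bigr)\chi + \hat\chi,
\]
where $\hat\chi := (\tilde\chi + \sqrt{-1}\partial\bar\partial\rho) - \tfrac{\delta}{2}\chi + \sqrt{-1}X_{1\bar 1}\,dz^1 \wedge d\bar z^1 \geq \tfrac{\delta}{2}\chi + \sqrt{-1}X_{1\bar 1}\,dz^1 \wedge d\bar z^1 \geq 0$. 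Expanding binomially as in (4.19) and (4.47-1) and invoking the boundary case~\eqref{condition-1}, I would show that once $X_{1\bar 1}$ exceeds a $t$-independent threshold,
\[
\tilde X^n \geq \bigl(1+\tfrac{\delta}{4}\bigr)^{n-m}\bigl(c\tilde X^m \wedge \omega^{n-m} + b_t f \omega^n\bigr).
\]
This yields $\tfrac{c}{C^m_n}S_{n-m}(\tilde X^{-1}) + b_t f S_n(\tilde X^{-1}) \leq (1+\tfrac{\delta}{4})^{-(n-m)}$, and then the concavity of $\ln(S_n/(S_m+a))$ employed in Lemma~\ref{lemma-4-3} produces a $t$-independent positive lower bound of order $(n-m)\delta$, completing the argument.

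The main obstacle is ensuring the $X_{1\bar 1}$-threshold is $t$-independent: every constant arising from the binomial expansion of $\tilde X^n - (1+\tfrac{\delta}{4})^{n-m}c\tilde X^m \wedge \omega^{n-m}$ and from the residual terms $c\chi^m \wedge \omega^{n-m}$ and $b_t f \omega^n$ must depend only on $\delta$, $\delta_0$, $n$, $m$, $c$, $\Vert f\Vert$, and fixed geometric data. This is achievable because $\delta$ comes from Boucksom's construction, $\rho$ is smooth on $\mathrm{Amp}(\tilde\chi)$ so its derivatives are locally controlled there, and $b_t$ remains bounded as $t \to 0$.
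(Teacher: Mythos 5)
Your overall strategy is the same as the paper's and it works, but your choice of auxiliary form and your justification for uniformity differ from the paper's in a way worth flagging. The paper takes $\tilde X := \chi + \tfrac{\delta}{2}\chi + \sqrt{-1}\,X_{1\bar 1}\,dz^1\wedge d\bar z^1$, which is manifestly $\rho$-free, so the threshold on $X_{1\bar 1}$ that emerges from the binomial expansion depends only on $\delta$, $\delta_0$, $c$, $b_1\,\Vert f\Vert$, $n$, $m$ and nothing that degenerates near $\partial\,\mathrm{Amp}(\tilde\chi)$. You instead take $\tilde X := \chi + (\tilde\chi + \sqrt{-1}\partial\bar\partial\rho) + \sqrt{-1}\,X_{1\bar 1}\,dz^1\wedge d\bar z^1$, which dominates the paper's $\tilde X$ pointwise; since the quantity $S_n(\lambda)/\bigl(\alpha S_m(\lambda) + \beta\bigr)$ is monotone increasing on $\Gamma_n$, the inequality $\tilde X^n \geq (1+\tfrac{\delta}{4})^{n-m}\bigl(c\,\tilde X^m\wedge\omega^{n-m} + b_t f\,\omega^n\bigr)$ transfers upward from the paper's smaller form, so your version is correct.

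Where your write-up is misleading is the remark that the threshold is $t$-independent ``because $\rho$ is smooth on $\mathrm{Amp}(\tilde\chi)$ so its derivatives are locally controlled there.'' That is not the right reason, and if taken at face value it would be a gap: $\rho$ has analytic singularities, and $\partial\bar\partial\rho$ is \emph{not} uniformly bounded on $\mathrm{Amp}(\tilde\chi)$, so any constant genuinely depending on $\Vert\partial\bar\partial\rho\Vert$ would blow up near the boundary of the ample locus and the lemma would fail. What actually makes your constants uniform is precisely the pointwise inequality you already wrote, $\hat\chi = (\tilde\chi + \sqrt{-1}\partial\bar\partial\rho) - \tfrac{\delta}{2}\chi + \sqrt{-1}X_{1\bar 1}dz^1\wedge d\bar z^1 \geq \tfrac{\delta}{2}\chi + \sqrt{-1}X_{1\bar 1}dz^1\wedge d\bar z^1$, which lets you replace every occurrence of $\hat\chi$ by a fixed $\rho$-free lower bound before any constant is extracted; you should make that reduction explicit and drop the ``locally controlled'' appeal. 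One further small imprecision: the concavity invoked in this step is that of $X \mapsto -\tfrac{c}{C^m_n}S_{n-m}(X^{-1}) - b_t f\,S_n(X^{-1})$ (which gives the linearization inequality $\sum_i P_i(\tilde X - X)_{i\bar i} \geq F(\tilde X) - F(X)$ used in Lemma~\ref{lemma-4-3}), not the logarithmic concavity of $S_n/(S_m+a)$, which is the tool from the gradient estimate; the distinction does not affect the outcome, but the attribution should be corrected.
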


\begin{proof}

The proof is simply a small modification of that for Lemma~\ref{lemma-4-3}. To clarify the dependence for constants, we  do the argument once more with some intermediate steps skipped. 
We refer the readers to the proof of Lemma~\ref{lemma-4-3} for omitted details.

We have
\begin{equation*}
\begin{aligned}
\frac{n - m}{n} S_1 (X^{-1})
&\leq 
\sum_i \left(\frac{c}{C^m_n} S_{n - m - 1;i} (X^{-1}) (X^{i\bar i})^2 + b_t f S_{n - 1;i} (X^{-1}) (X^{i\bar i})^2 \right) \\
&\leq S_1 (X^{-1}) .
\end{aligned}
\end{equation*}
Then direct calculation shows that
\begin{equation}
\begin{aligned}
	&\quad  \sum_i \left(\frac{c}{C^m_n} S_{n - m - 1;i} (X^{-1}) (X^{i\bar i})^2 + b_t f S_{n - 1;i} (X^{-1}) (X^{i\bar i})^2 \right) \left(\rho_{i\bar i} - \varphi_{t,i\bar i} \right)   \\
	&\geq \left( 1 + t + \delta\right) \delta_0 \sum_i \left(\frac{c}{C^m_n} S_{n - m - 1;i} (X^{-1}) (X^{i\bar i})^2 + b_t f S_{n - 1;i} (X^{-1}) (X^{i\bar i})^2 \right)   \\
	&\geq \frac{ \left(    1    + \delta \right)  \delta_0}{n} \left(\frac{c (n - m)}{C^m_n} S_{n - m} (X^{-1})    + n b_t f S_n (X^{-1}) \right)  S_1 (X^{-1})  \\
	&\qquad -   \left(\frac{c (n - m)}{C^m_n} S_{n - m } (X^{-1}) +  n b_t f S_{n } (X^{-1})  \right) 				
				.
\end{aligned}
\end{equation}
If $S_1(X^{-1}) \geq \frac{2 n }{(1 + \delta) \delta_0}$, then
\begin{equation}
\begin{aligned}
	&\quad  \sum_i \left(\frac{c}{C^m_n} S_{n - m - 1;i} (X^{-1}) (X^{i\bar i})^2 + b_t f S_{n - 1;i} (X^{-1}) (X^{i\bar i})^2 \right) \left(\rho_{i\bar i} - \varphi_{t,i\bar i}   \right)\\
	&\geq
	 \frac{(1 + \delta) \delta_0 (n - m)}{ 2n}
	 \sum_i \left(\frac{c}{C^m_n} S_{n - m - 1;i} (X^{-1}) (X^{i\bar i})^2 + b_t f S_{n - 1;i} (X^{-1}) (X^{i\bar i})^2 \right)  
				.
\end{aligned}
\end{equation}
Otherwise, we have
\begin{equation}
\label{case-5-4}
	X_{1\bar 1} \geq \cdots \geq X_{n \bar n} > \frac{(1 + \delta) \delta_0}{2 n} .
\end{equation}
In the case of \eqref{case-5-4}, we obtain as in \eqref{cone-4-43},
\begin{equation}
\label{cone-5-7}
\begin{aligned}
	&\quad  \sum_i \left(\frac{c}{C^m_n} S_{n - m - 1;i} (X^{-1}) (X^{i\bar i})^2 + b_t f S_{n - 1;i} (X^{-1}) (X^{i\bar i})^2 \right)  \left(\rho_{i\bar i} - \varphi_{t,i\bar i} \right)  \\
	&> 
	\frac{\delta}{2} \sum_i \left(\frac{c}{C^m_n} S_{n - m - 1;i} (X^{-1}) (X^{i\bar i})^2 + b_t f S_{n - 1;i} (X^{-1}) (X^{i\bar i})^2 \right) \chi_{i\bar i}  \\
		&\quad - \left(\frac{c}{C^m_n} C^{n - m - 1}_{n - 1}  \left(\frac{2n}{(1 + \delta)\delta_0  }\right)^{n - m - 1} + b_t f  \left(\frac{2n}{(1 + \delta) \delta_0  }\right)^{n - 1}  \right) X^{1\bar 1}\\
		&\quad - \left(\frac{c}{C^m_n} S_{n - m} (\tilde X^{-1}) + b_t f S_n (\tilde X^{-1})\right) + 1 
		,
\end{aligned}
\end{equation}
where
\begin{equation}
\tilde X := \chi  + \frac{\delta}{2} \chi + \sqrt{-1} X_{1\bar 1} d z^1 \wedge d\bar z^1 .
\end{equation}
Similar to \eqref{inequality-4-47-1},
\begin{equation}
\label{inequality-5-7}
\begin{aligned}
	&\quad \tilde X^n - \left(1 + \frac{\delta}{4}\right)^{n - m} \left(c \tilde X^m \wedge \omega^{n - m} + b_t f \omega^n\right) \\
	&> 
	n \sqrt{-1} X_{1\bar 1} d z^1 \wedge d\bar z^1 \wedge \left(\frac{\delta}{4} \chi  \right)^{n - 1} -  \left(1 + \frac{\delta}{4}\right)^n  c \chi^m \wedge \omega^{n - m}  - \left(1 + \frac{\delta}{4}\right)^{n - m} b_t f \omega^n 
	.
\end{aligned}
\end{equation}
If
\begin{equation*}
	X_{1\bar 1} \geq \frac{4^{n-1}}{\delta^{n - 1}\delta^{n - 1}_0}  \left(\left(1 + \frac{\delta}{4}\right)^n \frac{c}{C^m_n} S_m (\chi) + \left(1 + \frac{\delta}{4}\right)^{n - m} b_t f\right) ,
\end{equation*}
then \eqref{inequality-5-7} implies that
\begin{equation*}
\label{cone-5-11}
\tilde X^n - \left(1 + \frac{t}{4}\right)^{n - m} \left(c \tilde X^m \wedge \omega^{n - m} + b_t f \omega^n\right) > 0 ,
\end{equation*}
and hence
\begin{equation}
\label{cone-5-12}
	\frac{c}{C^m_n} S_{n - m} (\tilde X^{-1})+ b_t f S_n (\tilde X^{-1})< \frac{1}{\left(1 + \frac{\delta}{4}\right)^{n- m}} .
\end{equation}
Substituting \eqref{cone-5-12} into \eqref{cone-5-7},
\begin{equation*}
\begin{aligned}
	&\quad - \sum_i \left(\frac{c}{C^m_n} S_{n - m - 1;i} (X^{-1}) (X^{i\bar i})^2 + b_t f S_{n - 1;i} (X^{-1}) (X^{i\bar i})^2 \right)  \varphi_{t,i\bar i}   \\
	&> 
		\frac{\delta}{2} \sum_i \left(\frac{c}{C^m_n} S_{n - m - 1;i} (X^{-1}) (X^{i\bar i})^2 + b_t f S_{n - 1;i} (X^{-1}) (X^{i\bar i})^2 \right)  \chi_{i\bar i} \\
			&\quad - \left(\frac{(n - m)c}{n}  \left(\frac{2n}{(1 + \delta)\delta_0}\right)^{n - m - 1} + b_t f  \left(\frac{2n}{(1 + \delta)\delta_0}\right)^{n - 1}  \right) \frac{1}{X_{1\bar 1}} + \frac{(n - m) \delta}{4 \left(1 + \frac{\delta}{4}\right)^{n - m}}  
	.
\end{aligned}
\end{equation*}
Therefore, 
\begin{equation*}
\begin{aligned}
	&\quad  \sum_i \left(\frac{c}{C^m_n} S_{n - m - 1;i} (X^{-1}) (X^{i\bar i})^2 + b_t f S_{n - 1;i} (X^{-1}) (X^{i\bar i})^2 \right) (\rho_{i\bar i} - \varphi_{t,i\bar i}  ) \\
	&> 
		\frac{\delta_0 \delta}{2} \sum_i \left(\frac{c}{C^m_n} S_{n - m - 1;i} (X^{-1}) (X^{i\bar i})^2 + b_t f S_{n - 1;i} (X^{-1}) (X^{i\bar i})^2 \right)   
		 + \frac{(n - m) \delta}{8 \left(1 + \frac{\delta}{4}\right)^{n - m}}  
	.
\end{aligned}
\end{equation*}
when
\begin{equation*}
\begin{aligned}
	&X_{1\bar 1} \geq \frac{4^{n-1}}{\delta^{n - 1}\delta^{n - 1}_0}  \left(\left(1 + \frac{\delta}{4}\right)^n \frac{c}{C^m_n} S_m (\chi) + \left(1 + \frac{\delta}{4}\right)^{n - m} b_1 f\right) \\
	&+ \frac{8}{(n - m) \delta} \left(1 + \frac{\delta}{4}\right)^{n - m} \left(\frac{(n - m)c}{n}  \left(\frac{2n}{(1 + \delta)\delta_0}\right)^{n - m - 1} +  \left(\frac{2n}{(1 + \delta)\delta_0}\right)^{n - 1} b_1 f  \right) .
\end{aligned}
\end{equation*}

\end{proof}

In the proof of Theorem~\ref{theorem-C2-estimate}, we may instead consider the function
\begin{equation*}
		\ln w + A (\rho - \varphi_t)  ,
\end{equation*}
where $A$ is to be specified. Function $\ln w + A (\rho - \varphi_t) $ attains its maximal value at some point in the ample locus of $\tilde\chi$. Replacing Lemma~\ref{lemma-4-3} by Lemma~\ref{lemma-5-2} in the proof of Theorem~\ref{theorem-C2-estimate}, we shall obtain a $C^2$ estimate independent of parameter $t$.
\begin{theorem}[Second order estimate independent of $t$]

Let $\varphi_t \in C^4(M)$ be an admissible solution to Equation~\eqref{approximation-equation-1-5} for $t > 0$. Then there are positive constants $C$ and $A$ such that
\begin{equation*}
	w \leq C e^{A \left(-\rho + \varphi_t - \inf_M \varphi_t\right)} .
\end{equation*}
In particular, constants $C$ and $A$ are independent of parameter $t$.
\end{theorem}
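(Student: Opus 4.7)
The plan is to follow the proof of Theorem~\ref{theorem-C2-estimate} verbatim, with two key modifications: replace the test function $\ln w - A\varphi_t$ by
\begin{equation*}
\Psi := \ln w + A(\rho - \varphi_t),
\end{equation*}
and replace Lemma~\ref{lemma-4-3} by Lemma~\ref{lemma-5-2}. Here $\rho$ is the Boucksom potential, smooth on the ample locus $\mathrm{Amp}(\tilde\chi)$ with analytic singularities along its complement and satisfying $\tilde\chi+\sqrt{-1}\partial\bar\partial\rho\geq\delta\chi$. The purpose of this substitution is twofold: $\rho\to-\infty$ along $M\setminus\mathrm{Amp}(\tilde\chi)$ forces the maximum of $\Psi$ into the smooth locus, and at such a maximum the term that formerly required Lemma~\ref{lemma-4-3} becomes $wA\sum_i \mathcal{F}^{i\bar i}(\rho_{i\bar i}-\varphi_{t,i\bar i})$, which is precisely the combination controlled by Lemma~\ref{lemma-5-2} with constants independent of $t$.

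Since $\varphi_t$ is uniformly bounded by Theorem~\ref{theorem-3-2} and $w$ is smooth on $M$ for each fixed $t>0$, the function $\Psi$ attains its maximum at some $z_0\in\mathrm{Amp}(\tilde\chi)$. Choose coordinates diagonalizing $X$ at $z_0$ with $\omega_{i\bar j}=\delta_{ij}$, and write
\begin{equation*}
\mathcal{F}^{i\bar i} := \frac{c}{C^m_n}\, S_{n-m-1;i}(X^{-1})(X^{i\bar i})^2 + b_t f\, S_{n-1;i}(X^{-1})(X^{i\bar i})^2.
\end{equation*}
The critical point condition $d\Psi|_{z_0}=0$ and the second derivative inequality $\sqrt{-1}\partial\bar\partial\Psi|_{z_0}\leq 0$, multiplied by $\mathcal{F}^{i\bar i}$ and summed, produce an inequality identical in structure to \eqref{inequality-4-57}, but with $-wA\sum_i \mathcal{F}^{i\bar i}\varphi_{t,i\bar i}$ replaced by $wA\sum_i \mathcal{F}^{i\bar i}(\rho_{i\bar i}-\varphi_{t,i\bar i})$. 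The remaining ingredients—the strong concavity input \eqref{inequality-4-37}, the gradient cancellation \eqref{inequality-4-48}, curvature contributions, and the error from $b_t|\nabla f|^2 S_n(X^{-1})/f$—carry coefficients depending only on geometric data and $\|f\|_{C^2}$, and in particular are $t$-independent.

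Invoking Lemma~\ref{lemma-5-2} with its uniform constants $N,\theta$, the replacement term dominates $wA\theta(\sum_i \mathcal{F}^{i\bar i}+1)$ whenever $w(z_0)>N$. Choosing $A$ sufficiently large depending only on $\theta$ and the geometric data (hence not on $t$) forces a contradiction, and therefore $w(z_0)\leq N$. The inequality $\Psi(z)\leq\Psi(z_0)$ then yields the stated exponential bound after absorbing $\sup_M\rho$ into $C$. The crux of the argument—and the reason Lemma~\ref{lemma-5-2} is indispensable—is to trade the $t$-dependent positivity gap exploited in Lemma~\ref{lemma-4-3} (where the factor $t$ was the only means of opening a gap between $\chi^n$ and $c\chi^m\wedge\omega^{n-m}$) for the $t$-independent gap of size $\delta$ provided by $\tilde\chi+\sqrt{-1}\partial\bar\partial\rho\geq\delta\chi$; this is exactly where the big semipositive form $\tilde\chi$ and Boucksom's theorem enter the proof essentially, and is the only genuine technical obstacle once the test function $\Psi$ has been identified.
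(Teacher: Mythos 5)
Your proposal matches the paper's proof exactly: the paper itself states that one replaces the test function $\ln w - A\varphi_t$ by $\ln w + A(\rho - \varphi_t)$, observes that the maximum is forced into the ample locus of $\tilde\chi$ because $\rho\to-\infty$ along the complement, and substitutes Lemma~\ref{lemma-5-2} for Lemma~\ref{lemma-4-3} in the argument of Theorem~\ref{theorem-C2-estimate}. Your elaboration of which terms remain $t$-independent (via $0<b_t\leq b_1$ and the $\delta$-gap from $\tilde\chi+\sqrt{-1}\partial\bar\partial\rho\geq\delta\chi$) is correct and consistent with the paper's intent.
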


\medskip
\section{Stability estimate}
\label{stability}

Assume that
\begin{equation*}
	(\chi + t\chi + \tilde \chi + \sqrt{-1}\partial\bar\partial \varphi_1)^n = c (\chi + t\chi + \tilde \chi + \sqrt{-1}\partial\bar\partial \varphi_1)^m \wedge \omega^{n - m} + b_t f_1 \omega^n , \quad \sup_M \varphi_1 = 0 ,
\end{equation*}
and
\begin{equation*}
	(\chi + t\chi + \tilde \chi + \sqrt{-1}\partial\bar\partial \varphi_2)^n = c (\chi + t\chi + \tilde \chi + \sqrt{-1} \partial\bar\partial \varphi_2)^m \wedge \omega^{n - m} + b_t f_2 \omega^n
	.
\end{equation*}
In this section, we shall study the difference  of $\varphi_1$ and $\varphi_2$ in $L^\infty$ norm. 
%
%
%
%
%
%
\begin{theorem}
\label{theorem-6-1}

For $K > 0$, if $\varphi_1$ and $\varphi_2$ satisfy $ \Vert (\varphi_2 - \varphi_1)^+ \Vert_{L^{q^*}}  \leq K$, then we have
\begin{equation*}
	\sup_M (\varphi_2 - \varphi_1)
	\leq 
	C (\chi, \tilde \chi, \omega , K ,  q, \Vert f_1 \Vert_{L^q}) \Vert (\varphi_2 - \varphi_1)^+ \Vert^{\frac{1}{n + 1}}_{L^{q^*}} .
\end{equation*}
where $q^* = \frac{q}{q - 1}$.

\end{theorem}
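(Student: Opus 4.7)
The plan is to mimic the De Giorgi iteration of Theorem~\ref{theorem-3-2}, but with the super-level sets of $u := \varphi_2 - \varphi_1$ in place of those of $-\varphi_t$. Set $M_s := \{u > s\}$ for $s \geq 0$,
\begin{equation*}
A_s := \frac{1}{V_t}\int_{M_s}(u - s)(b_1 f_1 + C_0)\omega^n ,
\end{equation*}
and $A_{s,k}$ its smooth approximation using a mollification $\tau_k(u - s)$ of $(u - s)^+$. First I would solve the auxiliary complex Monge--Amp\`ere equation
\begin{equation*}
(t\chi + \tilde\chi + \sqrt{-1}\partial\bar\partial\psi_{s,k})^n = \frac{\tau_k(u - s)}{A_{s,k}}(b_1 f_1 + C_0)\omega^n, \quad \sup_M \psi_{s,k} = 0 ,
\end{equation*}
and apply the maximum principle to the test function
\begin{equation*}
\Phi := -\left(\frac{n + 1}{n}\right)^{n/(n + 1)}A_{s,k}^{1/(n + 1)}\left(-\psi_{s,k} + \frac{n}{n + 1}A_{s,k}\right)^{n/(n + 1)} + (u - s) .
\end{equation*}

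The central step is to prove $\Phi \leq 0$ on $M$. At an interior maximum $z_0$ of $\Phi$, the inequality $\sqrt{-1}\partial\bar\partial\Phi(z_0) \leq 0$ produces the matrix bound
\begin{equation*}
X_2 + \mu Y_{s,k} \leq X_1 + \mu(t\chi + \tilde\chi) ,
\end{equation*}
where $X_i = \chi + t\chi + \tilde\chi + \sqrt{-1}\partial\bar\partial\varphi_i$, $Y_{s,k} = t\chi + \tilde\chi + \sqrt{-1}\partial\bar\partial\psi_{s,k}$, and $\mu > 0$ is the derivative of the concave term. The equations satisfied by $X_1, X_2$ and $Y_{s,k}$, together with the concavity of $\log(S_n/(S_m + a))$ from \cite{Sun2015p}, should then yield a contradiction at any point with $\Phi(z_0) > 0$. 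This maximum principle step, which is the main obstacle, hinges on the weight $b_1 f_1 + C_0$ being large enough that the $b_t f_2$ contribution from the $\varphi_2$-equation is absorbed into the MA right-hand side, just as in the pointwise comparison underlying Theorem~\ref{theorem-3-2}.

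Granted $\Phi \leq 0$, the $\alpha$-invariant inequality~\eqref{inequality-3-8} applied to $\psi_{s,k}$ and passage to $k \to \infty$ yield
\begin{equation*}
\int_{M_s}\exp\left(\frac{\alpha_0}{A_s^{1/n}}(u - s)^{(n + 1)/n}\right)\omega^n \leq Ce^{\alpha_0 A_s} ,
\end{equation*}
and the hypothesis $\Vert u^+\Vert_{L^{q^*}} \leq K$ combined with H\"older's inequality gives a uniform bound $A_s \leq E$ depending only on $K$, $\Vert f_1\Vert_{L^q}$, and the data. Repeating~\eqref{inequality-3-11}--\eqref{inequality-3-14} with $(u - s)$ in place of $(-\varphi_t - s)$ yields, for any $p > n$, the iteration inequality
\begin{equation*}
s'\int_{M_{s + s'}}(b_1 f_1 + C_0)\omega^n \leq C\left(\int_{M_s}(b_1 f_1 + C_0)\omega^n\right)^{1 + 1/n - 1/p} .
\end{equation*}

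To extract the exponent $\tfrac{1}{n+1}$ in the conclusion, I would start De Giorgi iteration (Lemma~\ref{de-giorgi-iteration}) at $s_0 > 0$ to be chosen. By Chebyshev's inequality and H\"older,
\begin{equation*}
\phi(s_0) := \int_{M_{s_0}}(b_1 f_1 + C_0)\omega^n \leq \Vert b_1 f_1 + C_0\Vert_{L^q}\,\vert M_{s_0}\vert^{1/q^*} \leq C\,\frac{\Vert u^+\Vert_{L^{q^*}}}{s_0} .
\end{equation*}
The iteration then gives $M_{s_0 + d} = \emptyset$ with $d \sim \phi(s_0)^{1/n - 1/p}$. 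Choosing $s_0 \sim \Vert u^+\Vert_{L^{q^*}}^{(1/n - 1/p)/(1 + 1/n - 1/p)}$ to balance the two terms and taking $p \to \infty$ produces $\sup_M u \leq C\,\Vert u^+\Vert_{L^{q^*}}^{1/(n + 1)}$, with the claimed constant.
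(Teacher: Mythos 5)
Your plan correctly identifies the De Giorgi skeleton and the auxiliary Monge--Amp\`ere device, but it misses the two structural ingredients that make the paper's argument close, and neither gap is cosmetic.

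First, the maximum principle step. You would need a pointwise volume-form inequality of the form ``$\hat\chi \geq 0$ implies $\hat\chi^n \leq (b_1 f_1 + C_0)\,\omega^n$'' for some $(1,1)$-form $\hat\chi$ built out of $\varphi_2 - \varphi_1$. In Theorem~\ref{theorem-3-2} this is immediate because there is a single equation for $\varphi_t$ and one expands $(\chi + \hat\chi)^n$. Here there are two equations (for $\varphi_1$ with $f_1$, and for $\varphi_2$ with $f_2$), and a direct comparison between $X_2$ and $X_1$ gives nothing of the kind: if $X_2 \geq X_1$ then $(X_2 - X_1)^n \leq X_2^n - X_1^n = c\,(X_2^m - X_1^m)\wedge\omega^{n - m} + b_t(f_2 - f_1)\omega^n$, which is not controlled by $f_1$ alone (and $f_2$ is not in the constant list of the theorem). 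The paper gets around this by introducing an \emph{auxiliary strict supersolution} $v_t$ solving a perturbed equation with nonnegative constant $c_t$, and then decomposing
\begin{equation*}
X_1 = (1 - r)\,X_2 + r\Bigl(\chi + \tfrac{t}{2}\chi + \tfrac{1}{2}\tilde\chi + \sqrt{-1}\partial\bar\partial v_t\Bigr) + \hat\chi .
\end{equation*}
Concavity of $S_n/S_m$ shows the middle convex combination still ``supersolves'' the cone inequality, so wherever $\hat\chi \geq 0$ one gets precisely $b_t f_1\,\omega^n \geq \hat\chi^n$. That is the inequality your sketch needs and does not have; the phrase ``should then yield a contradiction'' is where the argument actually breaks.

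Second, the exponent. In the paper the quantity fed to De Giorgi is $2v_t + \tfrac{1 - r}{r}\varphi_2 - \tfrac{1}{r}\varphi_1$, and the auxiliary equation's right-hand side carries a factor $r^{-n}$. The iteration then gives a \emph{uniform} bound $s_\infty$ on this rescaled quantity, and dividing by $\tfrac{1}{r}$ yields $\varphi_2 - \varphi_1 \leq C\,r$; the exponent $\tfrac{1}{n+1}$ comes entirely from the choice $r \sim \Vert(\varphi_2 - \varphi_1)^+\Vert_{L^{q^*}}^{1/(n+1)}$, made so that $A_s \leq E(K)$ stays uniform. Your alternative of balancing the starting level $s_0$ does not reproduce this: with $p = n + 1$ (as the paper uses) one has $\tfrac{1}{n} - \tfrac{1}{p} = \tfrac{1}{n(n+1)}$, and balancing $s_0$ against $d \sim \phi(s_0)^{1/n - 1/p}$ gives $\sup u \lesssim \Vert u^+\Vert_{L^{q^*}}^{1/(n^2 + n + 1)}$, a strictly worse power. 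Recovering $\tfrac{1}{n+1}$ this way requires $p \to \infty$, but the constant $C(p,q)$ from bounding $x^{pq/(q-1)} \leq C(p,q)e^{x}$ diverges as $p \to \infty$, so the limit is vacuous. The rescaling by $r$ is what actually produces the stated exponent, and your proposal has no substitute for it.

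Finally, a smaller point: the paper starts the iteration at $s_0 = \Vert\varphi_1\Vert_{L^\infty}$ precisely to guarantee $\tfrac{1 - r}{r}(\varphi_2 - \varphi_1) \geq 0$ on the super-level sets; your free choice of $s_0$ would need to respect a similar sign constraint for the $A_s$ bound to hold.
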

\begin{proof}

According to the results in Section~\ref{solvability}, we can solve
\begin{equation*}
	\left(\chi + \frac{t}{2} \chi + \frac{1}{2} \tilde \chi + \sqrt{-1} \partial\bar\partial v_t\right)^n  
	= c \left(\chi + \frac{t}{2} \chi + \frac{1}{2} \tilde \chi + \sqrt{-1}\partial\bar\partial v_t \right)^m \wedge \omega^{n - m} + c_t \omega^n , 
\end{equation*}
where $\sup_M v_t = 0$ and
\begin{equation*}
	\int_M \left(\chi + \frac{t}{2} \chi + \frac{1}{2} \tilde \chi\right)^n = c\int_M \left(\chi + \frac{t}{2} \chi + \frac{1}{2} \tilde \chi\right)^m \wedge \omega^{n - m} + c_t \int_M \omega^n .
\end{equation*}
By Theorem~\ref{theorem-3-2}, it is easy to see that $v_t$ is uniformly bounded (independent of parameter $t$) and $c_t \geq 0$. As a result, we have
\begin{equation*}
	\left(\chi + \frac{t}{2} \chi + \frac{1}{2} \tilde \chi + \sqrt{-1} \partial\bar\partial v_t\right)^n  
	\geq
	c \left(\chi + \frac{t}{2} \chi + \frac{1}{2} \tilde \chi + \sqrt{-1}\partial\bar\partial v_t \right)^m \wedge \omega^{n - m}  , 
\end{equation*}
and then  by concavity of $\frac{S_n}{S_m}$,
\begin{equation}
\label{inequality-5-1-1}
\begin{aligned}
	& \left( (1 - r) (\chi + t\chi + \tilde \chi + \sqrt{-1} \partial\bar\partial \varphi_2) + r \left(\chi + \dfrac{t}{2} \chi + \dfrac{1}{2} \tilde \chi + \sqrt{-1} \partial\bar\partial v_t\right) \right)^n \\
	\geq& c \left( (1 - r) (\chi + t\chi + \tilde \chi + \sqrt{-1} \partial\bar\partial \varphi_2) + r \left(\chi + \dfrac{t}{2} \chi + \dfrac{1}{2} \tilde \chi + \sqrt{-1} \partial\bar\partial v_t\right) \right)^m \wedge \omega^{n - m} ,
\end{aligned}
\end{equation}
for any $ r \in [0,1]$. 
We can decompose
\begin{equation*}
\begin{aligned}
	&	\chi + t \chi + \tilde \chi + \sqrt{-1} \partial\bar\partial\varphi_1 \\
	=& (1 - r) \left(\chi + t \chi + \tilde \chi + \sqrt{-1} \partial\bar\partial\varphi_2\right)  + r \left(\chi + \frac{t}{2} \chi + \frac{1}{2} \tilde \chi + \sqrt{-1} \partial\bar\partial v_t\right) + \hat\chi 
	,
\end{aligned}
\end{equation*}
where 
\begin{equation*}
\hat \chi :=  \dfrac{r}{2} \left(t \chi + \tilde \chi\right) - r \sqrt{-1}  \partial\bar\partial v_t - (1 - r) \sqrt{-1} \partial\bar\partial\varphi_2 + \sqrt{-1} \partial\bar\partial\varphi_1 .
\end{equation*}
At any point with $\hat\chi \geq 0$, we derive from \eqref{inequality-5-1-1} that
\begin{equation*}
\label{inequality-5-1-2}
\begin{aligned}
	b_t f_1 \omega^n
	=& (\chi + t\chi + \tilde \chi + \sqrt{-1} \partial\bar\partial\varphi_1)^n - c (\chi + t\chi + \tilde \chi +  \sqrt{-1} \partial\bar\partial \varphi_1)^m \wedge \omega^{n - m} \\
	\geq&  \left( (1 - r) (\chi + t\chi + \tilde \chi + \sqrt{-1} \partial\bar\partial\varphi_2) + r \bigg(\chi + \frac{t}{2} \chi + \frac{1}{2} \tilde \chi +  \sqrt{-1} \partial\bar\partial v_t\bigg)  \right)^n \\
			& \quad -  c   \Bigg( (1 - r) (\chi + t\chi + \tilde \chi + \sqrt{-1} \partial\bar\partial\varphi_2) \\
			&\qquad \qquad + r \bigg(\chi + \frac{t}{2} \chi + \frac{1}{2} \tilde \chi +  \sqrt{-1} \partial\bar\partial v_t \bigg)  \Bigg)^m \wedge \omega^{n - m}  +   \hat\chi^n \\
	\geq&\; \hat\chi^n 
	,
\end{aligned}
\end{equation*}
that is,
\begin{equation*}
	b_t f_1 \omega^n
	\geq \frac{r^n}{2^n} \left(t \chi + \tilde \chi - 2 \sqrt{-1}  \partial\bar\partial v_t - \frac{1 - r}{r} \sqrt{-1} \partial\bar\partial \varphi_2 + \frac{1}{r} \sqrt{-1}  \partial\bar\partial \varphi_1\right)^n
	.
\end{equation*}

We solve the auxiliary Monge-Amp\`ere equation
\begin{equation*}
	\left(t\chi + \tilde \chi + \sqrt{-1}\partial\bar\partial \psi_k\right)^n = \frac{\tau_k \left(2 v_t + \frac{1 - r}{r} \varphi_2 - \frac{1}{r} \varphi_1 - s \right)}{A_{s,k}} \frac{2^n b_t f_1 }{r^n} \omega^n , \quad \sup_M \psi_k = 0,
\end{equation*}
where
\begin{equation*}
	V_t := \int_M \left(t\chi + \tilde \chi\right)^n 
\end{equation*}
and
\begin{equation*}
	A_{s,k} :=  \frac{1}{V_t} \int_M \tau_k \left(2 v_t + \frac{1 - r}{r} \varphi_2 - \frac{1}{r} \varphi_1 - s \right)  \frac{2^n b_t f_1 }{r^n} \omega^n .
\end{equation*}
We may assume that $s \geq \Vert  \varphi_1 \Vert_{L^\infty}$, which is uniformly bounded as shown in Section~\ref{L-estimate}. 
Applying the maximum principle to the following continuous function
\begin{equation*}
	\Phi := 
	2v_t + \frac{1 - r}{r} \varphi_2 - \frac{1}{r} \varphi_1 - s - \left( - \frac{n + 1}{n} A^{\frac{1}{n}}_{s,k} \psi_k + A^{\frac{n + 1}{n}}_{s,k} \right)^{\frac{n}{n + 1}}.
\end{equation*}
Let 
\begin{equation*}
M_s := \left\{ - 2 v_t - \frac{1 - r}{r} \varphi_2  + \frac{1}{r} \varphi_1 \leq - s \right\} .
\end{equation*}
Because of compactness of $M$, function $\Phi$ has to achieve its maximum at a certain point on $M$, say $z_0$. 
If $z_0 \in M\setminus \mathring{M}_s$, then obviously
\begin{equation*}
	\Phi (z_0)
	\leq 2 v_t + \frac{1 - r}{r} \varphi_2 - \frac{1}{r} \varphi_1 - s \leq 0.
\end{equation*} 
Otherwise,  we have that at point $z_0$,
\begin{equation}
\label{inequality-6-15}
\begin{aligned}
\frac{2}{r}\hat\chi
	\geq& 
	 \left( - \frac{n + 1}{n} A^{\frac{1}{n}}_{s,k} \psi_k + A^{\frac{n + 1}{n}}_{s,k}\right)^{- \frac{1}{n + 1}}   A^{\frac{1}{n}}_{s,k}  \left( t\chi + \tilde \chi +  \sqrt{-1} \partial\bar\partial (\psi_k)  \right)\\
	 &\; + \left(1 - \left( - \frac{n + 1}{n} A^{\frac{1}{n}}_{s,k} \psi_k + A^{\frac{n + 1}{n}}_{s,k}\right)^{- \frac{1}{n + 1}}   A^{\frac{1}{n}}_{s,k}  \right) (t\chi + \tilde \chi) \\
	 \geq& 
	 	 \left( - \frac{n + 1}{n} A^{\frac{1}{n}}_{s,k} \psi_k + A^{\frac{n + 1}{n}}_{s,k}\right)^{- \frac{1}{n + 1}}   A^{\frac{1}{n}}_{s,k}  \left( t\chi + \tilde \chi +  \sqrt{-1} \partial\bar\partial  \psi_k \right) .
\end{aligned}
\end{equation}
Considering the volume form of \eqref{inequality-6-15},
\begin{equation*}
\begin{aligned}
	  \frac{2^n}{r^n} b_t f_1 \omega^n 
	\geq& 
	\left( - \frac{n + 1}{n} A^{\frac{1}{n}}_{s,k} \psi_k + A^{\frac{n + 1}{n}}_{s,k}\right)^{- \frac{n}{n + 1}}     \left( 2 v_t + \frac{1 - r}{r} \varphi_2 - \frac{1}{r} \varphi_1 - s \right) \frac{2^n}{r^n} b_t f_1 \omega^n ,
\end{aligned}
\end{equation*}
that is, 
\begin{equation*}
\left( - \frac{n + 1}{n} A^{\frac{1}{n}}_{s,k} \psi_k + A^{\frac{n + 1}{n}}_{s,k}\right)^{ \frac{n}{n + 1}}    \geq 2 v_t + \frac{1 - r}{r} \varphi_2 - \frac{1}{r} \varphi_1 - s .
\end{equation*}

Since $\chi + \tilde \chi + \sqrt{-1} \partial\bar\partial\psi_k >0$, 
there is a constant $\beta > 0$ such that
\begin{equation}
\label{inequality-6-18}
\begin{aligned}
	&\quad \int_{M_s} \exp \left( \frac{\beta}{A^{\frac{1}{n}}_{s,k}}   \left( 2 v_t + \frac{1 - r}{r} \varphi_2 - \frac{1}{r} \varphi_1 \right)^{\frac{n + 1}{n}}    \right) \omega^n \\
	&\leq  \int_{M_s} \exp \left( - \frac{n + 1}{n} \beta \psi_k + \beta  A_{s,k} \right) \omega^n \\
	&\leq C e^{\beta A_{s,k}} .
\end{aligned}
\end{equation}
Letting $k \to + \infty$, we obtain
\begin{equation}
\label{inequality-6-19}
		\int_{M_s} \exp \left( \frac{\beta}{A^{\frac{1}{n}}_{s}}   \left( 2 v_t + \frac{1 - r}{r} \varphi_2 - \frac{1}{r} \varphi_1 \right)^{\frac{n + 1}{n}}    \right) \omega^n 
		\leq C e^{\beta A_{s}} ,
\end{equation}
where
\begin{equation*}
	A_s := \lim_{k \to +\infty} A_{s,k} = \frac{1}{V_t} \int_{M_s} \left(2 v_t + \frac{1 - r}{r} \varphi_2 - \frac{1}{r} \varphi_1 - s \right) \frac{2^n b_t f_1}{r^n} \omega^n .
\end{equation*}

Now we try to find the estimates of $A_s$. Since $s \geq \Vert \varphi_1 \Vert_{L^\infty}$, 
\begin{equation*}
	\frac{1 - r}{r} (\varphi_2 - \varphi_1 ) \geq - 2 v_t + \varphi_1 + s \geq \varphi_1 + s \geq 0 ,   \qquad\text{ on } M_s .
\end{equation*}
By direction computation,
\begin{equation}
\label{inequality-6-20}
\begin{aligned}
	A_s 
	 \leq&\,  \frac{1}{V_t} \int_{M_s} \left(\frac{1 - r}{r} (\varphi_2 - \varphi_1) - \varphi_1    -  s\right) \frac{2^n b_t f_1}{r^n} \omega^n \\
	 \leq&\, \frac{1}{V_t } \int_{M_s} \frac{1 - r}{r} (\varphi_2 - \varphi_1)^+ \frac{2^n b_t f_1}{r^n} \omega^n \\
	 \leq&\, \frac{2^n b_t}{V_t r^{n + 1}} \int_{M_s} (\varphi_2 - \varphi_1)^+ f_1 \omega^n 
	 .
\end{aligned}
\end{equation}
Applying H\"older inequality to \eqref{inequality-6-20} with respect to measure $\omega^n$,
\begin{equation*}
		A_s 
		\leq 
		\frac{2^n b_t}{V_t r^{n + 1}} \Vert (\varphi_2 - \varphi_1)^+ \Vert_{L^{q*}} \Vert f_1 \Vert_{L^q}  
		\leq 
		\frac{2^n b_1}{V_0 r^{n + 1}} \Vert (\varphi_2 - \varphi_1)^+ \Vert_{L^{q*}} \Vert f_1 \Vert_{L^q}  
		.
\end{equation*}
Since $\Vert (\varphi_2 - \varphi_1)^+\Vert_{L^{q*}} < K$, we can choose
\begin{equation*}
	r := \frac{1}{2} \left(    \frac{\Vert (\varphi_2 - \varphi_1)^+\Vert_{L^{q*}} }{K}  \right)^{\frac{1}{n + 1}} < \frac{1}{2} ,
\end{equation*}
and hence we denote the upper bound for $A_s$ by 
\begin{equation*}
\displaystyle
	E 
	:= \dfrac{2^{2n + 1} b_1}{V_0  } K   \Vert f_1 \Vert_{L^q}  
	.
\end{equation*}

Now we shall apply a De Giorigi iteration argument. For any $p > n$ and $q > 1$,
\begin{equation}
\label{inequality-6-22}
\begin{aligned}
	A_s
	\leq&\, 
	\frac{2^n b_t}{V_t r^n}  \left(  \int_{M_s } f_1 \omega^n\right)^{\frac{(n + 1)p - n}{(n + 1) p}} \\
	&\quad \cdot   \left(   \int_{M_s}  \left(2 v_t + \frac{1 - r}{r} \varphi_2 - \frac{1}{r} \varphi_1 - s\right)^{\frac{(n + 1) p}{n}} f_1 \omega^n\right)^{\frac{n}{(n + 1) p}} \\
	\leq&\,
			 \frac{2^n b_t}{V_t r^n}   \frac{A^{\frac{1}{n + 1}}_s}{\beta^{\frac{n}{n + 1}}} \phi^{\frac{(n + 1)p - n}{(n + 1) p}}  (s)  \Vert f_1 \Vert^{\frac{n}{(n+ 1) p}} _{L^q} \\
			  &\quad\cdot
			  			  \left(  \Bigg(\int_{M_s} \bigg( \frac{\beta}{A^{\frac{1}{n}}_s} \left(2 v_t + \frac{1 - r}{r} \varphi_2 - \frac{1}{r} \varphi_1 - s \right)^{\frac{n + 1 }{n}} \bigg)^{\frac{pq}{q - 1}} \omega^n\Bigg)^{1 - \frac{1}{q}}   \right)^{\frac{n}{(n+ 1) p}} 
			  \\
	\leq&\,
				 C(p,q) \frac{b_t}{V_t r^n}  \frac{A^{\frac{1}{n + 1}}_s}{\beta^{\frac{n}{n + 1}}}
				  e^{\frac{n \beta}{(n + 1)p} \left(1 - \frac{1}{q}\right) A_s}  \Vert f_1 \Vert^{\frac{n}{(n+ 1) p}} _{L^q} 
				  \phi ^{\frac{(n + 1)p - n}{(n + 1) p}} (s) ,
\end{aligned}
\end{equation}
where
\begin{equation*}
	\phi (s) := \int_{M_s} f_1 \omega^n .
\end{equation*}
In the above computation, we apply H\"older inequality with respect to measure $f_1 \omega^n$ in the first inequality, and with respect to measure $\omega^n$ in the second inequality.  The last inequality is due to \eqref{inequality-6-19}.
As in Section~\ref{L-estimate}, we can pick $p = n + 1$ in this proof. 
Rearranging \eqref{inequality-6-22},
\begin{equation*}
\begin{aligned}
	A_s
	\leq&\,
				 C(p,q) \frac{b^{\frac{n + 1}{n}}_t}{V^{\frac{n + 1}{n}}_t r^{n + 1}}  \frac{1}{\beta}
				  e^{\frac{ \beta}{p} \left(1 - \frac{1}{q}\right) A_s}  \Vert f_1 \Vert^{\frac{1}{ p}} _{L^q} 
				  \phi^{\frac{(n + 1)p - n}{np}}  (s) \\
	\leq&\, 
					 C(p,q) \frac{b^{\frac{n + 1}{n}}_t}{V^{\frac{n + 1}{n}}_t r^{n + 1}}  \frac{1}{\beta}
					  e^{\frac{ \beta}{p} \left(1 - \frac{1}{q}\right) E}  \Vert f_1 \Vert^{\frac{1}{ p}} _{L^q} 
					  \phi^{\frac{(n + 1)p - n}{np}} (s) \\
	\leq&\,
		 C(p,q, \beta, \Vert f_1\Vert_{L^q}, E) \frac{b^{\frac{n + 1}{n}}_t}{V^{\frac{n + 1}{n}}_t r^{n + 1}}  
						  \phi^{\frac{(n + 1)p - n}{np}}  (s) .
\end{aligned}
\end{equation*}
For any $s' > 0$ and $s \geq \Vert \varphi_1\Vert_{L^\infty}$,
\begin{equation}
\label{inequality-6-26}
\begin{aligned}
	s' \phi (s + s')
	&\leq \int_{M_{s+ s'}}  \left(2 v_t + \frac{1 - r}{r} \varphi_2 - \frac{1}{r} \varphi_1 - s\right) f_1 \omega^n \\
	&\leq  \int_{M_{s}}  \left(2 v_t + \frac{1 - r}{r} \varphi_2 - \frac{1}{r} \varphi_1 - s\right) f_1 \omega^n \\
	&\leq C(p,q, \beta, \Vert f_1\Vert_{L^q}, E)  \frac{b^{\frac{1}{n}}_1}{V^{\frac{1}{n}}_0 r}  
								  \phi^{1 + \frac{1}{n} - \frac{1}{p}}  (s) .
\end{aligned}
\end{equation}
Combining Lemma~\ref{de-giorgi-iteration} and Inequality~\eqref{inequality-6-26}, we can conclude that
\begin{equation*}
	\int_{M_s} f_1 \omega^n = 0 ,
\end{equation*}
when
\begin{equation*}
s \geq s_\infty := \Vert \varphi_1\Vert_{L^\infty} + C(p,q, \beta, \Vert f_1\Vert_{L^q}, E)  2^{\frac{1 + \frac{1}{n} - \frac{1}{p}}{\frac{1}{n} - \frac{1}{p}}} \left(\left(\int_M  \omega^n\right)^{1 - \frac{1}{q}} \Vert f_1\Vert_{L^q} \right)^{\frac{1}{n} - \frac{1}{p}} .
\end{equation*}
Consequently
\begin{equation*}
\begin{aligned}
 \varphi_2 - \varphi_1 
 &\leq 
2 \Bigg(2 \Vert v_t \Vert_{L^\infty} + \Vert \varphi_1\Vert_{L^\infty} \\
&\qquad  \qquad+ C(p,q, \beta, \Vert f_1\Vert_{L^q}, E)  2^{\frac{1 + \frac{1}{n} - \frac{1}{p}}{\frac{1}{n} - \frac{1}{p}}} \left(\left(\int_M  \omega^n\right)^{1 - \frac{1}{q}} \Vert f_1\Vert_{L^q} \right)^{\frac{1}{n} - \frac{1}{p}} \Bigg) r
.
\end{aligned}
\end{equation*}

%
%
%
%
%
%
%
%
%

\end{proof}

\medskip
\section{The solution in the boundary case}
\label{solution}

In this section, we shall study the solution to Equation~\eqref{equation-2} in the boundary case. 
The boundary case is similar to the singular complex Monge-Amp\`ere equation, and the solution is achieved in pluripotential sense. The traditional viscosity method fails if we do not impose any extra geometric condition, since we can only show uniform smoothness on the ample locus of $\tilde\chi$~\cite{EyssidieuxGuedjZeriahi2011}\cite{EyssidieuxGuedjZeriahi2017}.

To construct a pluripotential solution, we need the following precompactness result~\cite{Sun2022}. For completeness, we include a proof here.
\begin{lemma}
	\label{lemma-7-1}
	For $0 < t \leq 1$, the set of $L^\infty$ bounded admissible solutions to approximation equation~\eqref{approximation-equation-1-5} is precompact in  $L^{q^*}$ norm for $1 \leq q^* < + \infty$.
	
\end{lemma}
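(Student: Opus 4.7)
The plan is to exploit the fact that, for $0 < t \leq 1$, the functions $\varphi_t$ are all quasi-plurisubharmonic with respect to a single fixed K\"ahler form, and then combine the standard $L^1$ precompactness of quasi-psh families with the uniform $L^\infty$ bound already proved in Theorem~\ref{theorem-3-2}.

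First I would observe that, since $\chi$ is K\"ahler and $\tilde\chi \geq 0$, the form $\omega' := 2\chi + \tilde\chi$ is strictly positive, hence K\"ahler, and is independent of $t$. For $0 < t \leq 1$ we have $\chi + t\chi + \tilde\chi \leq \omega'$, so the admissibility condition yields
$$\omega' + \sqrt{-1}\,\partial\bar\partial\varphi_t \;\geq\; \chi + t\chi + \tilde\chi + \sqrt{-1}\,\partial\bar\partial\varphi_t \;\geq\; 0,$$
i.e., every $\varphi_t$ lies in $\mathrm{PSH}(M,\omega')$. Together with the normalization $\sup_M \varphi_t = 0$, the whole family sits inside a single compact subset of $\mathrm{PSH}(M,\omega')$ in a sense to be used below.

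Next, I would invoke the classical compactness theorem for normalized quasi-psh functions on a compact K\"ahler manifold: the set $\{\psi \in \mathrm{PSH}(M,\omega') : \sup_M \psi = 0\}$ is compact in the $L^1(M,\omega^n)$ topology. Hence any sequence $\{\varphi_{t_k}\}$ extracted from our family admits a subsequence, still denoted $\{\varphi_{t_k}\}$, converging in $L^1(M,\omega^n)$ to some limit $\varphi \in \mathrm{PSH}(M,\omega')$ with $\sup_M \varphi = 0$.

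Finally, I would upgrade the $L^1$ convergence to $L^{q^*}$ convergence by interpolation against the uniform $L^\infty$ bound $\|\varphi_t\|_{L^\infty} \leq C$ of Theorem~\ref{theorem-3-2}, which also passes to the limit and gives $\|\varphi\|_{L^\infty} \leq C$. For any $1 \leq q^* < \infty$,
$$\int_M |\varphi_{t_k} - \varphi|^{q^*}\,\omega^n \;\leq\; (2C)^{q^*-1}\int_M |\varphi_{t_k} - \varphi|\,\omega^n \;\longrightarrow\; 0,$$
which is exactly the claimed precompactness. There is no serious obstacle; the only point requiring care is to pick the dominating K\"ahler form $\omega'$ independently of $t$, which is precisely where the restriction $t \leq 1$ in the statement plays its role.
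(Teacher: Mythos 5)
Your proof is correct and gives a valid alternative to the paper's argument. The key observation is the same in both: for $0 < t \leq 1$, all the $\varphi_t$ are plurisubharmonic with respect to the single $t$-independent K\"ahler form $\omega' = 2\chi + \tilde\chi$. Where you diverge is in how you extract $L^1$-precompactness from this. You invoke the classical pluripotential-theoretic fact that $\{\psi \in \mathrm{PSH}(M,\omega') : \sup_M\psi = 0\}$ is $L^1$-compact (a Hartogs-type theorem), which makes the step essentially a citation. The paper instead derives a uniform $W^{1,2}$ bound by integration by parts,
\[
\Vert\nabla\varphi_t\Vert^2_{L^2(\omega^n)} \;=\; \int_M -\varphi_t\,\sqrt{-1}\,\partial\bar\partial\varphi_t\wedge\omega^{n-1}
\;\leq\; \Vert\varphi_t\Vert_{L^\infty}\int_M(2\chi+\tilde\chi)\wedge\omega^{n-1},
\]
and then appeals to the compact Sobolev embedding $W^{1,2}\hookrightarrow L^1$. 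Both routes then upgrade from $L^1$ to $L^{q^*}$ by the identical interpolation against the uniform $L^\infty$ bound. Your version is shorter and uses a standard off-the-shelf compactness theorem; the paper's version is more self-contained, produces a uniform $W^{1,2}$ bound as a by-product (which the paper in fact uses again immediately after Theorem~\ref{theorem-7-2} to conclude $W^{1,2}$ convergence of the approximating sequence), and does not rely on any pluripotential machinery.
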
  
\begin{proof}

Noticing that $2 \chi + \tilde \chi + \sqrt{-1}\partial\bar\partial \varphi$,
we have
\begin{equation}
\begin{aligned}
	\Vert \varphi \Vert_{L^\infty} \int_M (2 \chi + \tilde \chi ) \wedge \omega^{n - 1}
	&\geq \int_M - \varphi (2 \chi + \tilde \chi + \sqrt{-1} \partial\bar\partial \varphi) \wedge \omega^{n - 1} \\
	&\geq \int_M - \varphi \sqrt{-1} \partial\bar\partial \varphi \wedge \omega^{n - 1} \\
	&= \Vert \nabla \varphi \Vert^2_{L^2 (\omega^n)}
	.
\end{aligned}
\end{equation}
By compact embedding for Sobolev spaces, the set of $L^\infty$ bounded admissible solutions is precompact in $L^1$ norm. 
For a $L^1$-convergent sequence $\{\varphi_i\}$, there is a subsequence $\varphi_{i_j}$ which converges almost everywhere. Then for any $1 \leq q^* < +\infty$,
\begin{equation}
\begin{aligned}
	\int_M \left| \varphi_{i_j} - \varphi_{i_k}\right|^{q^*}\omega^n
	&\leq 
	\Vert \varphi_{i_j} - \varphi_{i_k}\Vert^{q^* - 1}_{L^\infty} \int_M \left| \varphi_{i_j} - \varphi_{i_k}\right| \omega^n \\
	&\leq 
	C \int_M \left| \varphi_{i_j} - \varphi_{i_k}\right| \omega^n  \\
	&\to 0 \qquad (j,k \to \infty) ,
\end{aligned}
\end{equation}
that is, $\{\varphi_{i_j}\}$ is Cauchy in $L^{q^*}$. 
Therefore, $L^\infty$-bounded set of admissible solutions is precompact in $L^{q^*}$.

\end{proof}
%
%
%

%
%
%
%
%
%
%
%
%
%
%
%
%
%
%
%
%
%
%
Now we shall construct a weak solution in pluripotential sense to Equation~\eqref{equation-2} in the boundary case.
\begin{theorem}
\label{theorem-7-2}

For $q > 1$ and $f \in L^q$, there is a bounded solution $\varphi$ in pluripotential sense to Equation~\eqref{equation-2} in the boundary case~\eqref{condition-1}. In particular,
\begin{equation*}
 \left(\chi  + \tilde \chi + \sqrt{-1} \partial\bar\partial \varphi \right)^n \geq c^{\frac{n}{n - m}} \omega^n .
\end{equation*}
\end{theorem}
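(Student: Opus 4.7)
The plan is to obtain $\varphi$ as a subsequential limit of the smooth admissible solutions $\varphi_t$ to the approximation equation~\eqref{approximation-equation-1-5}, whose existence for each $t \in (0,1]$ is guaranteed by Section~\ref{solvability}. By Theorem~\ref{theorem-3-2} the family $\{\varphi_t\}_{t \in (0,1]}$ is uniformly bounded in $L^\infty(M)$, and Lemma~\ref{lemma-7-1} furnishes a sequence $t_k \to 0^+$ along which $\varphi_{t_k}$ converges in $L^{q^*}$ to some $\varphi \in L^\infty(M)$. Passing to a further subsequence, the convergence is also pointwise almost everywhere, so the upper semicontinuous envelope of $\varphi$ is bounded and $(\chi + \tilde\chi)$-plurisubharmonic.

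The decisive step is to upgrade this weak convergence to $C^\infty_{\mathrm{loc}}$ convergence on the ample locus $Amp(\tilde\chi)$. On any open set whose closure is contained in $Amp(\tilde\chi)$, the Boucksom function $\rho$ from Section~\ref{smoothness} is bounded, so the $t$-independent second-order estimate of Section~\ref{smoothness} yields a uniform bound on $\sqrt{-1}\partial\bar\partial \varphi_{t_k}$. Combined with the gradient estimate applied locally (where all background data is fixed) and with Evans--Krylov plus Schauder bootstrapping, this will produce uniform $C^{k,\alpha}_{\mathrm{loc}}$ bounds on $Amp(\tilde\chi)$ at every order $k$. Observing that $b_{t_k} \to 0$ as $t_k \to 0$ (since~\eqref{equality-1-3} and~\eqref{condition-1-6} express $b_t$ as a polynomial in $t$ vanishing at $t = 0$), Arzelà--Ascoli then gives $\varphi \in C^\infty(Amp(\tilde\chi))$ satisfying Equation~\eqref{equation-2} pointwise on the ample locus.

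To pass from this pointwise identity on $Amp(\tilde\chi)$ to a pluripotential identity on $M$, I would invoke Bedford--Taylor theory: because $\varphi$ is a bounded $(\chi + \tilde\chi)$-plurisubharmonic function, both $(\chi + \tilde\chi + \sqrt{-1} \partial\bar\partial \varphi)^n$ and $(\chi + \tilde\chi + \sqrt{-1}\partial\bar\partial \varphi)^m \wedge \omega^{n-m}$ are well-defined positive Borel measures that assign no mass to pluripolar sets. Since $\tilde \chi$ is big and semipositive, $M \setminus Amp(\tilde\chi)$ is contained in a proper analytic subvariety, hence pluripolar, so both measures are concentrated on $Amp(\tilde \chi)$, where they coincide with their smooth expressions and agree by the pointwise equation. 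The lower bound $(\chi + \tilde\chi + \sqrt{-1}\partial\bar\partial \varphi)^n \geq c^{n/(n-m)} \omega^n$ then follows from the Newton--Maclaurin inequality applied to~\eqref{equation-2} pointwise on $Amp(\tilde\chi)$, and extends to all of $M$ by the same pluripolar argument.

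The principal obstacle is securing uniform regularity on the ample locus and then propagating the smooth identity there to a genuine pluripotential identity globally. The first part rests entirely on the $t$-independent $C^2$ estimate of Section~\ref{smoothness} (which is why that section is essential), while the second rests on the fact that Bedford--Taylor Monge--Ampère measures of bounded plurisubharmonic functions do not charge pluripolar sets; these two ingredients together are what allow the pointwise smooth equation on $Amp(\tilde\chi)$ to extend to the desired weak identity on $M$.
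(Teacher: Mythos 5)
Your argument is correct, but it takes a genuinely different route from the paper's. The paper does not use the $t$-independent $C^2$ estimate of Section~\ref{smoothness} anywhere in the construction of the pluripotential solution. Instead, it invokes the stability estimate (Theorem~\ref{theorem-6-1}) along a rapidly $L^{q^*}$-Cauchy subsequence $t_{i_j}$ to force $\sup_M(\varphi_{t_{i_{j+1}}} - \varphi_{t_{i_j}}) < C\, 2^{-j}$, so that $\varphi_{t_{i_j}} + C\, 2^{1-j}$ is a \emph{monotone decreasing} sequence of quasi-psh functions converging to a bounded limit $\varphi$; one can then pass to the limit in the approximation equation directly by Bedford--Taylor continuity of (mixed) Monge--Amp\`ere measures under decreasing limits, and the smoothness on $Amp(\tilde\chi)$ is deduced only afterward, in a separate remark. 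You reverse these roles: you first obtain locally smooth convergence on $Amp(\tilde\chi)$ via the $t$-independent $C^2$ bound plus Evans--Krylov and Schauder, get the equation pointwise there (using also that $b_{t_k} \to 0$, which the paper likewise uses implicitly), and then extend to a global pluripotential identity by arguing that the measures $(\chi+\tilde\chi+\sqrt{-1}\partial\bar\partial\varphi)^n$ and $(\chi+\tilde\chi+\sqrt{-1}\partial\bar\partial\varphi)^m\wedge\omega^{n-m}$ of the bounded quasi-psh $\varphi$ put no mass on pluripolar sets, hence none on the analytic complement of $Amp(\tilde\chi)$; this non-charging fact for the mixed measure does follow from Bedford--Taylor theory by writing $\omega$ locally as $dd^c$ of a smooth bounded potential. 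Both routes are sound. Yours bypasses Section~\ref{stability} entirely, at the cost of leaning on the ample-locus regularity and the non-charging property; the paper's route is more elementary pluripotential-theoretically (only monotone continuity) but requires the stability estimate. One caveat: the paper's monotone construction produces not just some pluripotential solution but a specific decreasing limit, which is what is reused in the uniqueness argument of Section~\ref{uniqueness}, so your construction, if substituted for the paper's, would need the a posteriori identification of the two limits (which the uniqueness section itself provides).
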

\begin{proof}

For $0 < t \leq 1$, there exists a set of solutions to approximation equation~\eqref{approximation-equation-1-5} with condition~\eqref{condition-1-6}. 
According to Lemma~\ref{lemma-7-1}, this solution set is uniformly bounded, and hence precompact in $L^{q^*}$ norm where $q^* = \frac{q}{q - 1}$. Then we can choose a decreasing sequence $t_i \to 0+$ such that $\varphi_{t_i}$ is convergent in $L^{q^*}$ norm. For all $j \in \mathbb{N}$, there is $i_j \in \mathbb{N}$ such that for any $i' > i_j$,
\begin{equation*}
	\Vert \varphi_{t_i'} - \varphi_{t_{i_j}}\Vert_{L^{q^*}} < \frac{1}{2^{(n+1) j}} .
\end{equation*}

For $0 < t' < t \leq 1$, we have
\begin{equation*}
\chi + t\chi + \tilde \chi + \sqrt{-1} \partial\bar\partial \varphi_{t'} > 0 ,
\end{equation*}
and
\begin{equation*}
	(\chi + t\chi + \tilde \chi + \sqrt{-1} \partial\bar\partial \varphi_{t'})^n = c (\chi + t\chi + \sqrt{-1} \partial\bar\partial \varphi_{t'})^m \wedge \omega^{n - m} + b_{t} g \omega^n
\end{equation*}
for some function $b_t g > b_{t'} f > 0$. 
According to Theorem~\ref{theorem-6-1},
\begin{equation}
	\sup_M (\varphi_{t_{i_{j + 1}}} - \varphi_{t_{i_j}}) < C(\chi , \tilde \chi , \omega , 1,  q, \Vert f\Vert_{L^q}) \frac{1}{2^j} .
\end{equation}
and hence $\left\{\varphi_{t_{i_j}} + \frac{C}{2^{j - 1}}\right\}$ is pointwisely decreasing and bounded. So sequence 
$\left\{\varphi_{t_{i_j}} + \frac{C}{2^{j - 1}}\right\}$ decreases to some bounded function $\varphi$.
Letting $j \to \infty$ in 
\begin{equation*}
	\left(\chi + t_{i_j} \chi + \tilde \chi + \sqrt{-1} \partial\bar\partial \varphi_{t_{i_j}}\right)^n
	=
	c 	\left(\chi + t_{i_j} \chi + \tilde \chi + \sqrt{-1} \partial\bar\partial \varphi_{t_{i_j}}\right)^m \wedge \omega^{n - m} + b_{t_{i_j}} f \omega^n ,
\end{equation*}
we see that $\varphi$ is a solution in pluripotential sense to Equation~\eqref{equation-2} after normalization.

Newton-Maclaurin inequality tells us that
\begin{equation*}
\begin{aligned}
	&\qquad \frac{\left(\chi + t_{i_j} \chi + \tilde \chi + \sqrt{-1} \partial\bar\partial \varphi_{t_{i_j}}\right)^n}{\omega^n} \\
	&\geq 
	\left(\frac{\left(\chi + t_{i_j} \chi + \tilde \chi + \sqrt{-1} \partial\bar\partial \varphi_{t_{i_j}}\right)^n}{\left(\chi + t_{i_j} \chi + \tilde \chi + \sqrt{-1} \partial\bar\partial \varphi_{t_{i_j}}\right)^m \wedge \omega^{n - m}}\right)^{\frac{n}{n - m}} \\
	&\geq 
	c^{\frac{n}{n - m}}
	,
\end{aligned}
\end{equation*}
and hence we obtain
\begin{equation*}
 \left(\chi  + \tilde \chi + \sqrt{-1} \partial\bar\partial \varphi \right)^n \geq c^{\frac{n}{n - m}} \omega^n .
\end{equation*}

\end{proof}

The function set $\{\varphi_{t_i}\}$ is bounded in $W^{1,2}$, and hence there is a subsequence weakly convergent to a function $\tilde \varphi \in W^{1,2}$ by Alaoglu's Theorem. 
Since $W^{1,2}$ is compacted embedded in $L^2$, there is a convergent subsequence in $L^2$ norm.  
By passing to a subsequence again, we can assume that $\{\varphi_{t_i}\}$ is convergent to $\tilde\varphi$ almost everywhere, and hence $\tilde \varphi = \varphi$ as defined in Theorem~\ref{theorem-7-2}. 
Moreover, by a similar argument to \cite{Cegrell2007}, we can show that $\varphi_{t_i}$ is convergent in $W^{1,2}$ to $\varphi$.

Moreover, as stated in Section~\ref{smoothness}, $\{\varphi_{t_i}\}$ has a uniform  estimates in $Amp(\tilde \chi)$. Passing to a subsequence if necessary, $\{\varphi_{t_i}\}$ locally uniformly converges   to a smooth function in $ Amp(\tilde\chi)$. It is easy to see that the function has to be $\varphi$ defined in Theorem~\ref{theorem-7-2}. 

\medskip
\section{Uniqueness}
\label{uniqueness}

In previous section, we construct admissible solutions to Equation~\eqref{equation-2} from approximations equation~\eqref{approximation-equation-1-5} with variant functions $f$. In this section, we shall show that these admissible solutions are  same in some sense.

We shall adapt Calabi's trick. Suppose that there are two solutions $\varphi_1$ and $\varphi_2$ as constructed in Section~\ref{solution}, respectively from
\begin{equation}
\label{construction-8-1}
	(\chi + t_i \chi + \tilde \chi + \sqrt{-1} \partial\bar\partial \varphi_{t_i})^n = c (\chi + t_i \chi + \tilde \chi + \sqrt{-1} \partial\bar\partial \varphi_{t_i})^m \wedge \omega^{n - m} + b_{t_i} f_1 \omega^n
\end{equation}
and
\begin{equation}
\label{construction-8-2}
	(\chi + t'_j \chi + \tilde \chi + \sqrt{-1} \partial\bar\partial \varphi_{t'_j})^n = c (\chi + t'_j \chi + \tilde \chi + \sqrt{-1} \partial\bar\partial \varphi_{t'_j})^m \wedge \omega^{n - m} + b_{t'_j} f_2 \omega^n .
\end{equation}
Combining \eqref{construction-8-1} and \eqref{construction-8-2},
\begin{equation*}
\begin{aligned}
	&\quad \int_M \left(\varphi_{t'_j} - \varphi_{t_i}\right) \left(b_{t_i} f_1  - b_{t'_j} f_2 \right) \omega^n \\
	&= \int^1_0 \Bigg(\int_M \left(\varphi_{t'_j} - \varphi_{t_i}\right) \left( (t_i - t'_j) \chi +  \sqrt{-1} \partial\bar\partial \left(\varphi_{t_i} - \varphi_{t'_j}\right) \right) \\
	&\qquad \quad \quad \wedge \Bigg(  n \left(\chi + \left(\theta t_i  + (1 - \theta) t'_j \right) \chi + \tilde \chi + \sqrt{-1}  \partial\bar\partial\left(\theta  \varphi_{t_i} + (1 - \theta)  \varphi_{t'_j}\right)\right)^{n - 1}     \\
	\\
	&\qquad \qquad  \quad\quad  - m  c\left(\chi + \left(\theta t_i   + (1 - \theta) t'_j \right)\chi + \tilde \chi + \sqrt{-1}  \partial\bar\partial\left(\theta  \varphi_{t_i} + (1 - \theta)  \varphi_{t'_j}\right)\right)^{m - 1}  \\
	&\qquad \qquad \qquad \qquad \qquad  \qquad \qquad\wedge \omega^{n - m}  \Bigg)\Bigg) d\theta 
	.
\end{aligned}
\end{equation*}
Applying Stokes' Lemma and letting $i, j \to \infty$,
\begin{equation*}
\begin{aligned}
	0
	&=   \int_M \sqrt{-1} \partial \left( \varphi_1 - \varphi_2 \right) \wedge     \bar\partial \left(\varphi_1 - \varphi_2 \right)   \\
	&\qquad \qquad  \wedge \Bigg(\int^1_0\Big(  n \left(\chi +   \tilde \chi + \sqrt{-1}  \partial\bar\partial\left(\theta  \varphi_1 + (1 - \theta)  \varphi_2 \right)\right)^{n - 1}     \\
	\\
	&\qquad \qquad \qquad \qquad\quad  - m c\left(\chi +   \tilde \chi + \sqrt{-1}  \partial\bar\partial\left(\theta  \varphi_1 + (1 - \theta)  \varphi_2 \right)\right)^{m - 1}  \wedge \omega^{n - m}  \Big)d\theta \Bigg) 
	.
\end{aligned}
\end{equation*}
Since the complement of $Amp (\tilde \chi)$ is codimensional at least $2$,  $Amp (\tilde \chi)$ is connected and the complement has measure zero. 
In the ample locus, $\varphi_1$ and $\varphi_2$ are smooth, and hence by concavity
\begin{equation*}
\begin{aligned}
	&\quad 
	\dfrac{\left(\chi + \tilde \chi + \sqrt{-1} \partial\bar\partial \left(\theta \varphi_1 + (1 - \theta) \varphi_2\right) \right)^n}{\left(\chi + \tilde \chi + \sqrt{-1} \partial\bar\partial \left(\theta \varphi_1 + (1 - \theta) \varphi_2\right) \right)^m \wedge \omega^{n - m}} 
	&\geq 
	c
	.
\end{aligned}
\end{equation*}
By monotonicity of $\frac{S_n}{S_m}$,
\begin{equation*}
\begin{aligned}
	0 &<
	n \left(\chi + \tilde \chi + \sqrt{-1} \partial\bar\partial \left(\theta \varphi_1 + (1 - \theta) \varphi_2\right) \right)^{n - 1 } \\
	&\qquad -
	m c \left(\chi + \tilde \chi + \sqrt{-1} \partial\bar\partial \left(\theta \varphi_1 + (1 - \theta) \varphi_2\right) \right)^{m - 1} \wedge \omega^{n - m} .
\end{aligned}
\end{equation*}
So we can see that $\varphi_1 - \varphi_2$ is constant in $Amp (\tilde \chi)$.

\medskip
\section{Fake boundary case}
\label{fake}

In \cite{Sun2016}, the author solved
\begin{equation}
\label{equation-9-1}
	\left(\chi + \sqrt{-1} \partial\bar\partial \varphi\right)^n = g \left(\chi + \sqrt{-1} \partial\bar\partial \varphi\right)^m \wedge \omega^{n - m},
\end{equation}
where $g $ is a smooth function and 
\begin{equation*}
	g \geq c:= \frac{\int_M \chi^n}{\int_M \chi^m \wedge \omega^{n - m}} .
\end{equation*}
The boundary case of the cone condition seems to be
\begin{equation}
\label{fake-boundary-case}
	n \chi^{n - 1} - m g \chi^{m - 1} \wedge \omega^{n - m} \geq 0 . 
\end{equation}
However, we did not take into account the general case $g \geq c$ in Theorem~\ref{main-theorem},
because conditin~\eqref{fake-boundary-case} is actually a fake boundary case if $g \not \equiv c$. In this section, we shall show that Equation~\eqref{equation-9-1} has a classical (smooth) solution.

When $g \not\equiv c$ on $M$, the solvability of Equation~\eqref{equation-9-1} means that there exist a smooth function $\varphi$ and a constant $b$ solving
\begin{equation}
\label{equation-9-3}
	\left(\chi + \sqrt{-1} \partial\bar\partial \varphi\right)^n = e^b g \left(\chi + \sqrt{-1} \partial\bar\partial \varphi\right)^m \wedge \omega^{n - m}.
\end{equation}
Since $g\geq c$, it is easy to see that $b \leq 0$ by integrating Equation~\eqref{equation-9-3}. 
However we do not have any way to determine the exact value of $b$ a priori.
Since $g$ is continuous, there is a maximal value $\Lambda > c$ for $g$.

If $\min_M g = \lambda > c$, then we instead consider
\begin{equation*}
\label{equation-9-4-1}
	\left(\chi + \sqrt{-1} \partial\bar\partial \varphi\right)^n = \frac{c}{\lambda} g \left(\chi + \sqrt{-1} \partial\bar\partial \varphi\right)^m \wedge \omega^{n - m} .
\end{equation*}
We derive from \eqref{fake-boundary-case} that
\begin{equation*}
		n \chi^{n - 1} - m \frac{c}{\lambda} g \chi^{m - 1} \wedge \omega^{n - m} > 
		0 . 
\end{equation*}
According to \cite{Sun2016}\cite{Sun2017}, there is a smooth admissible solution $\varphi$ to Equation~\eqref{equation-9-1} under the sense of \eqref{equation-9-3}.
Therefore, we only need to consider the case of $\min_M g = c$ in the rest of this section.

\medskip
\subsection{An upper bound for constant $b$}
\label{upper-bound-b}

Assume that we have a $C^2$ admissible solution to Equation~\eqref{equation-9-3}. By Newton-Maclaurin inequality, 
\begin{equation*}
\begin{aligned}
	\left(\frac{\left(\chi + \sqrt{-1} \partial\bar\partial \varphi\right)^m \wedge \omega^{n - m}}{\omega^n}\right)^{\frac{1}{m}}
	&\geq 
	\left(\frac{\left(\chi + \sqrt{-1} \partial\bar\partial \varphi\right)^n}{ \left(\chi + \sqrt{-1} \partial\bar\partial \varphi\right)^m \wedge \omega^{n - m}}\right)^{\frac{1}{n - m}} 
	&\geq e^{\frac{b}{n - m}} c^{\frac{1}{n - m}}
	.
\end{aligned}
\end{equation*}
That is,
\begin{equation}
 \left(\chi + \sqrt{-1} \partial\bar\partial \varphi\right)^m \wedge \omega^{n - m} 
 \geq 
 e^{\frac{m b}{n - m}} c^{\frac{m}{n - m}} \omega^n .
\end{equation}
Integrating Equation~\eqref{equation-9-3}, 
\begin{equation}
\label{inequality-9-6}
\begin{aligned}
	\int_M \chi^n 
	&= e^b c \int_M \chi^m \wedge \omega^{n - m} + e^b \int_M (g - c) (\chi + \sqrt{-1} \partial\bar\partial \varphi)^m \wedge \omega^{n - m} \\
	&\geq e^b c \int_M \chi^m \wedge \omega^{n - m} + \frac{\Lambda - c}{2} e^b \int_{\left\{g \geq \frac{\Lambda + c}{2}\right\}} (\chi + \sqrt{-1} \partial\bar\partial \varphi)^m \wedge \omega^{n - m}  \\
	&\geq 
	e^b c \int_M \chi^m \wedge \omega^{n - m} + \frac{\Lambda - c}{2} e^{\frac{n b}{n - m}} c^{\frac{m}{n - m}} \int_{\left\{g \geq \frac{\Lambda + c}{2}\right\}} \omega^n 
	.
\end{aligned}
\end{equation}
There is a constant $\theta_0 > 0$ such that
\begin{equation}
\label{inequality-9-7}
	  \frac{\Lambda - c}{2}  c^{\frac{m}{n - m}} \int_{\left\{g \geq \frac{\Lambda + c}{2}\right\}} \omega^n 
	  \geq
	  \theta_0 c \int_M \chi^m \wedge \omega^{n - m} .
\end{equation}
Substituting \eqref{inequality-9-7} into \eqref{inequality-9-6}, 
\begin{equation}
	1 \geq e^b + \theta_0 e^{\frac{n b}{n - m}} .
\end{equation}
There is a unique solution $b' < 0$ to $1 = e^x + \theta_0 e^{\frac{n x}{n - m}}$, and consequently $b \leq b'$.

\medskip
\subsection{Solving complex Monge-Amp\`ere type equations}

Now we begin to solve the complex Monge-Amp\`ere type equations through method of continuity, which was carried out in \cite{Sun2016}. Following the work in \cite{Sun2016}, we need to apply this method of continuity twice.

First, we define a smooth function $g_1$ by
\begin{equation}
\label{equality-9-9}
	\chi^n = g_1 \chi^m \wedge \omega^{n - m} .
\end{equation}
Then
\begin{equation}
\label{cone-9-10}
	n \chi^{n - 1} > m \left( \max\left\{e^{b'} g,g_1 \right\} + 2\delta_1 \right) \chi^{n - 1} \wedge \omega^{n - m} .
\end{equation}
for some $\delta_1 > 0$ sufficiently small. 
We can find a smooth function $g_2$ satisfying
\begin{equation*}
	\max\left\{e^{b'} g,g_1 \right\} < g_2 < \max\left\{e^{b'} g,g_1 \right\} + \delta_1 .
\end{equation*}
Therefore, we have from \eqref{equality-9-9} and \eqref{cone-9-10} that
\begin{equation}
	\chi^n < g_2 \chi^m \wedge \omega^{n - m} ,
\end{equation}
and 
\begin{equation}
\label{cone-9-12}
	n \chi^{n - 1} > m \left(g_2 + \delta_1\right) \chi^{m - 1} \wedge \omega^{n - m} .
\end{equation}
According to Theorem 1 in \cite{Sun2016}, there exists a smooth function $\tilde \varphi$ and a unique constant $\tilde b < 0$ solving
\begin{equation*}
	\chi^n_{\tilde \varphi} 
	=
	e^{\tilde b} g_2 \chi^m_{\tilde \varphi} \wedge \omega^{n - m} .
\end{equation*}

Second, we adopt method of continuity again from $\chi_{\tilde \varphi}$ and consider the following path
\begin{equation}
\label{path-9-13}
	\chi^n_{\varphi_t} 
	= 
	e^{b_t + t b'} g^t g^{1 - t}_2 \chi^m_{\varphi_t} \wedge \omega^{n - m} ,\qquad \text{for } t\in [0,1] ,
\end{equation}
where $b_0 = \tilde b$ found out in the first stage. 
Thus
\begin{equation}
\label{inequality-9-14}
	\chi^n_{\varphi_t} 
	> 
	e^{b_t + b'}  g \chi^m_{\varphi_t} \wedge \omega^{n - m} 
	.
\end{equation}
Repeating the argument in subsection~\ref{upper-bound-b}, we have
\begin{equation}
 1 > e^{b_t +   b'} + \theta_0 e^{\frac{n (b_t +  b')}{n - m} },
\end{equation} 
and hence $b_t < 0$.
As a direct result,
\begin{equation}
e^{b_t + t b'} g^t g^{1 - t}_2 < e^{t b'} g^t g^{1 - t}_2 < g_2 .
\end{equation}
Therefore, we obtain $t$-independent $C^\infty$ estimates of $\varphi_t$ as in \cite{Sun2016}, and consequently a solution pair to Equation~\eqref{equation-9-3} by method of continuity. 

\medskip
\noindent
{\bf Acknowledgements}\quad
The author wish to thank  Chengjian Yao and Ziyu Zhang for their helpful discussions and suggestions. The  author is supported by a start-up grant from ShanghaiTech University.

\medskip

\end{document}